\newcommand\bs{\backslash}
\newtheorem{theorem}{Theorem}[section]
\newtheorem{proposition}[theorem]{Proposition}
\newtheorem{corollary}[theorem]{Corollary}
\newtheorem{lemma}[theorem]{Lemma}
\theoremstyle{definition}
\newtheorem{question}[theorem]{Question}
\numberwithin{equation}{section}
\theoremstyle{definition}
\newtheorem{remark}[theorem]{Remark}
\newcommand{\para}[1]{\medskip\noindent\textbf{#1.}}
\newcommand{\Cs}{\mathscr{C}}
\newcommand{\Lc}{\mathcal{L}}
\newcommand{\Bc}{\mathcal{B}}
\newcommand{\Oc}{\mathcal{O}}
\newcommand{\half}{\tfrac{1}{2}}
\newcommand{\Cb}{\mathbb{C}}
\newcommand{\C}{\mathbb{C}}
\newcommand{\Fb}{\mathbb{F}}
\newcommand{\G}{\Gamma}
\newcommand{\Pb}{\mathbb{P}}
\newcommand{\Qb}{\mathbb{Q}}
\newcommand{\Q}{\mathbb{Q}}
\newcommand{\Rb}{\mathbb{R}}
\newcommand{\Vb}{\mathbb{V}}
\newcommand{\Zb}{\mathbb{Z}}
\newcommand{\Z}{\mathbb{Z}}
\newcommand{\Sf}{\mathfrak{S}}
\newcommand{\Af}{\mathfrak{A}}
\newcommand{\Cf}{\mathfrak{C}}
\newcommand{\Df}{\mathfrak{D}}
\newcommand{\Vf}{\mathfrak{V}}
\newcommand{\Mcbar}{\overline{\mathcal{M}}}
\newcommand{\Mc}{\mathcal{M}}
\newcommand{\beq}{\begin{eqnarray}}
\newcommand{\eeq}{\end{eqnarray}}
\newcommand\ssm{\smallsetminus}
\newcommand{\ir}{{ir}}
\newcommand{\Hf}{\mathfrak{H}}
\DeclareMathOperator{\Aut}{Aut}
\DeclareMathOperator{\End}{End}
\DeclareMathOperator{\Hom}{Hom}
\DeclareMathOperator{\pic}{Pic}
\DeclareMathOperator{\Sp}{Sp}
\DeclareMathOperator{\PSL}{PSL}
\DeclareMathOperator{\SL}{SL}
\title{\vspace{-1in}Arithmeticity of the monodromy of the\\
 Wiman-Edge pencil}
\author{Benson Farb and Eduard Looijenga \thanks{The first author was supported in part by National Science Foundation Grant Nos. DMS-1105643 and DMS-1406209. The second author is supported by the Chinese National Science Foundation. Both authors are supported by the Jump Trading Mathlab Research Fund. }}
\begin{document}
\maketitle
\begin{abstract}
The {\em Wiman-Edge pencil} is the universal family $\Cs/\mathcal B$ of projective, genus $6$, complex-algebraic curves admitting a faithful action of the icosahedral group $\Af_5$.  The goal of this paper is to prove that the monodromy of $\Cs/\mathcal B$ is commensurable with a Hilbert modular group; in particular is arithmetic. We then give a modular interpretation of this, as well as a uniformization of $\mathcal B$.
 \end{abstract}

\tableofcontents

\section{Introduction}

 The {\em Wiman-Edge pencil} is the universal family $\Cs/\mathcal B$ of projective, genus $6$, complex-algebraic curves  admitting a faithful action of the icosahedral group $\Af_5$.   It has $5$ singular members; including a reducible curve of $10$ lines with intersection pattern the Petersen graph, and a union of $5$ conics with intersection pattern the complete graph on $5$ vertices.  Discovered by Wiman \cite{Wiman} (1895) and Edge \cite{Edge} (1981), the Wiman-Edge pencil appears in a variety of contexts, including: 
 
\begin{enumerate}
\item $\Cs/{\mathcal B}$ is a natural pencil of curves on the quintic del Pezzo surface $S$. It is invariant by the full automorphism group of $S$, i.e., the symmetric group of degree five, $\Sf_5$, with each $C_t\in{\mathcal B}$ being $\Af_5$-invariant, and with a unique smooth member $C_0$ that is $\Sf_5$-invariant, called the {\em Wiman curve}.

\item $\mathcal B$ is the moduli space of K3-surfaces with (a certain) faithful $\mu_2\times\Af_5$ action; see \S\ref{subsection:K3}.

\item $\Cs/\mathcal B$ is the quotient of one of the two 1-parameter families of lines on a nonsingular member of the Dwork pencil of Calabi-Yau quintic threefolds by it's group of automorphisms. 
\end{enumerate}

For a number of recent papers on the Wiman-Edge pencil, see \cite{Cheltsov, CKS, DFL, Za}.  

\bigskip

Given a family of varieties, it is a basic problem to compute its monodromy, to relate this to geometric properties of the family, and to use this information to uniformize (if possible) the base in terms of a period mapping, via Hodge structures.  While general theory has been developed around these questions, explicit computations can be quite difficult, and accordingly there are fewer of these.  The purpose of this paper is to solve these problems for the Wiman-Edge pencil $\Cs/\mathcal B$.  We prove that the monodromy of $\Cs/\mathcal B$ is commensurable with a Hilbert modular group; in particular that it is arithmetic. We then give a modular interpretation of this, and use it to uniformize $\mathcal B$.

Restricting to the smooth locus $\Cs/\mathcal B^\circ$, we obtain a family of smooth, genus $6$ curves, and so 
(choosing, say, the Wiman curve $C_0$ as representing the base point) a {\em monodromy} representation
\begin{equation}
\label{equation:monodromy:intro}
\rho:\pi_1({\mathcal B}^\circ)\to \Aut(H_1(C_0;\Zb))\cong \Sp_{12}(\Zb)
\end{equation}
that records how the fibers $C_t$ twist along loops in ${\mathcal B}^\circ$.  The isomorphism in \eqref{equation:monodromy:intro} comes from the fact that diffeomorphisms of $C_0$ preserve the algebraic intersection number on $C_0$, which is a symplectic pairing on $H_1(C_0;\Zb)$.  But the monodromy preserves more structure, for example it commutes with the $\Af_5$ action on $C_0$. The main result of this paper is to determine (up to finite index) the {\em monodromy group} $\rho(\pi_1({\mathcal B}^\circ))$. To state our main result, let 
\[
\Oc_o:=\Z+ \Z.2X\cong \Z +\Z\sqrt{5}
\] 
be the index $2$ subring of the ring of integers of $\Qb(\sqrt{5})$.  We will see that the monodromy representation $\rho$ factors through $ \SL_2(\Oc_o)$. In fact we will prove the following.

\begin{theorem}[{\bf Arithmeticity of the monodromy}]\label{theorem:arithmeticity}
The monodromy group of the Wiman-Edge pencil is isomorphic to a finite index subgroup of $\SL_2(\Oc_o)$; in particular it is arithmetic. 
\end{theorem}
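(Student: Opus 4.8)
The plan is to first identify, using the representation theory of $\Af_5$, the small algebraic group through which the monodromy must factor, and then to compute the monodromy explicitly enough to see that its image is of finite index.

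\medskip\noindent\textbf{Step 1 (the target group).} Because the $\Af_5$-action is defined fiberwise over all of $\mathcal B$, the monodromy commutes with the induced $\Af_5$-action on $H_1(C_0;\Zb)$; hence $\rho$ factors through the centralizer of $\Af_5$ in $\Sp_{12}(\Zb)$. I would determine the $\Qb[\Af_5]$-module $H_1(C_0;\Qb)$ from the ramification data of the quotient map $C_0\to C_0/\Af_5$ via the Chevalley--Weil formula, expecting the answer $H_1(C_0;\Qb)\iso W\oplus W$, where $W$ is the (unique) $6$-dimensional irreducible $\Qb$-representation of $\Af_5$ --- the one whose complexification is the sum of the two $3$-dimensional irreducibles --- with $\End_{\Af_5}(W)\iso\Qb(\sqrt5)$. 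Then $\End_{\Af_5}\big(H_1(C_0;\Qb)\big)\iso M_2(\Qb(\sqrt5))$, and the (essentially unique) $\Af_5$-invariant symplectic intersection form identifies the centralizer of $\Af_5$ in $\Sp(H_1(C_0;\Qb))$ with $\Sp_2$ over $k:=\Qb(\sqrt5)$, i.e.\ with the $\Qb$-algebraic group $\Res_{k/\Qb}\SL_2$. Keeping track of the lattice $H_1(C_0;\Zb)$ inside $W\oplus W$ then pins down the integral centralizer as (a conjugate of) $\SL_2(\Oc_o)$, where the non-maximal order $\Oc_o=\Zb+\Zb\sqrt5$ arises precisely from how $H_1(C_0;\Zb)$ meets the two copies of $W$. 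This already gives the factorization $\rho:\pi_1(\mathcal B^\circ)\to\SL_2(\Oc_o)$ asserted before the theorem, and reduces the problem to showing that the image has finite index.

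\medskip\noindent\textbf{Step 2 (generators of the monodromy).} The base $\mathcal B$ is a $\Pb^1$ and $\mathcal B^\circ=\mathcal B\smallsetminus\{5\text{ points}\}$, so $\pi_1(\mathcal B^\circ)$ is generated by loops $\gamma_1,\dots,\gamma_5$ around the singular members, subject to $\gamma_1\cdots\gamma_5=1$. For each singular member I would write down its nodes (the vanishing cycles of the smoothing), note how $\Af_5$ permutes them, and compute the local monodromy $\rho(\gamma_i)$ --- a product of symplectic transvections in these vanishing cycles --- as an explicit element of $\SL_2(\Oc_o)$, keeping track of the two real embeddings $k\into\Rb$. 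The especially degenerate fibers (the $10$ lines with Petersen intersection graph, the $5$ conics with complete intersection graph $K_5$) have many, highly symmetric, vanishing cycles, and should contribute the ``large'' elements. The involution $\iota$ on $\mathcal B\iso\Pb^1$ coming from $\Sf_5$ (which acts on $\mathcal B$ through $\Sf_5/\Af_5=\Zb/2$, fixing $[C_0]$ and exactly one singular member and swapping the other four in pairs) reduces the computation to essentially three matrices.

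\medskip\noindent\textbf{Step 3 (finite index), and the main obstacle.} It remains to prove that the explicit matrices $\rho(\gamma_1),\dots,\rho(\gamma_5)$ generate a finite-index subgroup of $\SL_2(\Oc_o)$. I would do this by a direct manipulation over $k=\Qb(\sqrt5)$ --- using that the maximal order $\Zb[\tfrac{1+\sqrt5}{2}]$ is Euclidean and that $\SL_2(\Oc_o)$ is commensurable with $\SL_2(\Zb[\tfrac{1+\sqrt5}{2}])$ --- showing that these matrices, perhaps after passing to a finite-index overgroup, contain enough unipotent (``elementary'') matrices $\bigl(\begin{smallmatrix}1&*\\0&1\end{smallmatrix}\bigr)$ and $\bigl(\begin{smallmatrix}1&0\\ *&1\end{smallmatrix}\bigr)$ to force, by row and column reduction, a congruence subgroup; such a subgroup has finite index, hence so does $\rho(\pi_1(\mathcal B^\circ))$. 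This last step is the crux of the argument. Unlike the higher-rank situation, one cannot shortcut it: $\SL_2$ has rank one, so Zariski-density of the monodromy in $\Res_{k/\Qb}\SL_2$ together with it being generated by quasi-unipotents does \emph{not} imply finite index (there are thin, Sanov-type subgroups of $\SL_2(\Oc_o)$ with exactly these properties). One therefore needs the Picard--Lefschetz data of Step 2 exactly --- including the correct combinatorics of the vanishing cycles of each singular member and the $\Af_5$-action on them --- and then a genuine, if elementary, verification that those specific matrices generate a finite-index subgroup; marshalling these two inputs is where the real work lies.
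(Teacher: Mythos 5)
Your Steps 1 and 2 follow the paper's own strategy: \S2--3 of the paper carry out exactly the reduction to $\SL_2(\Oc_o)$ (via $H_1(C_0;\Qb)\cong E_\Qb^2$, $\End_{\Af_5}(E_\Qb)\cong\Qb(\sqrt5)$, and a careful choice of integral form $E_o$ with commutant $\Oc_o$), and \S4.1 computes the local monodromies by Picard--Lefschetz, using the involution $\iota$ just as you describe. The problem is Step 3, which you correctly identify as the crux but do not actually carry out, and for which the method you propose would not go through as stated.

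A row-and-column (Euclidean) reduction argument needs the group to contain \emph{all} elementary matrices $\bigl(\begin{smallmatrix}1&\omega\\0&1\end{smallmatrix}\bigr)$ and $\bigl(\begin{smallmatrix}1&0\\\omega&1\end{smallmatrix}\bigr)$ with $\omega$ ranging over a nonzero \emph{ideal} of $\Oc_o$ (only then do results of Vaserstein--Liehl type give finite index). The explicit local monodromies do not visibly provide this: the two ``transvection-like'' generators give unipotents whose off-diagonal entries span only the rank-one $\Zb$-module $\Zb\sqrt5$, which is not an ideal, and the subgroup generated by $\bigl(\begin{smallmatrix}1&\sqrt5\\0&1\end{smallmatrix}\bigr)$ and $\bigl(\begin{smallmatrix}1&0\\\sqrt5&1\end{smallmatrix}\bigr)$ alone is a Sanov-type free group whose projection to one real place is already discrete of infinite covolume --- i.e.\ exactly the thin subgroup you warn against. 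What actually saves the day, and what your outline misses, is a structural feature of the \emph{other} two local monodromies (around the irreducible $6$-nodal degenerations): they are unipotents fixing a common vector, with translation parts $X^{3}$ and $X^{-3}$, which are Galois conjugate up to sign and together span a finite-index additive subgroup of $\Oc_o$ --- hence a full lattice $\Omega$ in $K\otimes\Rb\cong\Rb^2$ inside a single horospherical subgroup. The paper then invokes the Benoist--Oh criterion (Theorem 1.1 of their paper on discreteness for subgroups of products of $\SL_2$): a subgroup of $\SL_2(\Oc_K)$ containing such a horospherical lattice together with one matrix with $c\neq0$ is automatically a lattice in $\SL_2(\Rb)\times\SL_2(\Rb)$, hence of finite index. (Verifying the $c\neq 0$ condition still takes an argument: the paper shows the Zariski closure of the image is all of $\SL_2(\Cb)\times\SL_2(\Cb)$, so the image cannot be triangularizable.) To repair your proof you should either import this criterion, or supply a genuine argument that your specific matrices generate $E_2(\mathfrak q)$ for some ideal $\mathfrak q$ --- and the latter is not something your outline makes plausible.
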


In \S\ref{section:period} we apply Theorem~\ref{theorem:arithmeticity} to various period mappings associated to the Wiman-Edge pencil.   For example, let $\Hf$ denote the hyperbolic upper half-plane. The group $\SL_2(\Oc_0)$ acts properly discontinuously on $\Hf\times\Hf$.  The quotient of this action is a  quasi-projective, complex-algebraic surface, called a 
\emph{Hilbert modular surface}.  

The above monodromy representation $\rho$ is induced by an algebraic map 
\[
{\Bc^\circ}=\Gamma\bs\Hf\to\SL_2(\Oc_o)\bs \Hf^2,
\]
called the \emph{period map}, which assigns to a curve with faithful $\Af_5$-action its Jacobian with the induced $\Af_5$-action.  In \S\ref{subsection:period1} we use Theorem~\ref{theorem:arithmeticity} and its proof to study this period map. 

Finally, in \S\ref{subsection:K3}, we show that one can attach to the Wiman-Edge Pencil a family of K3 surfaces with base $\mathcal B$.  We then show how this description can be used to uniformize $\mathcal B$. We find:

\begin{theorem}[{\bf Uniformization of \boldmath$\Bc$}]
\label{theorem:uniformization}
The smooth, projective curve $\Bc$ (which we recall, is a copy of $\Pb^1$) supports in  a natural manner a family of polarized K3 surfaces endowed with a particular faithful action of $\mu_2\times\Af_5$ (described explicitly in \S\ref{subsection:K3}),  and the associated  period map gives $\Bc$ the structure of a Shimura curve. 
\end{theorem}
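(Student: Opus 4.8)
The plan is to build the family of K3 surfaces directly out of the pencil, isolate the sub-variation of Hodge structure that carries its periods, and then finish with the Torelli and surjectivity theorems for lattice-polarized K3 surfaces together with Theorem~\ref{theorem:arithmeticity}. I begin with the construction. Every member $C_t$ of the Wiman--Edge pencil lies on the quintic del Pezzo surface $S$ and belongs to the linear system $|-2K_S|$; let $\pi_t\colon X_t\to S$ be the double cover branched along $C_t$. By the Hurwitz formula the canonical bundle of $X_t$ is trivial, and $X_t$ is simply connected, so for $t$ in the smooth locus $\Bc^\circ$ the surface $X_t$ is a K3 surface. The deck involution of $\pi_t$ gives the $\mu_2$-factor, while the subgroup $\Af_5\subset\Sf_5=\Aut(S)$, which stabilizes every $C_t$, lifts to $X_t$ and commutes with the deck involution; this produces the claimed faithful $\mu_2\times\Af_5$-action, and pulling back $-K_S$ provides a $(\mu_2\times\Af_5)$-invariant polarization of degree $10$. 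Over the five singular members---among them the $10$-line Petersen configuration and the union of five conics---the branch curve acquires nodes, and there one replaces the naive double cover by the appropriate simultaneous resolution; after the standard base change making the total family semistable one obtains a flat family $\mathcal X\to\Bc$ of (lattice-)polarized surfaces, K3 over $\Bc^\circ$ with controlled degenerations over the five boundary points. This is the family promised in \S\ref{subsection:K3}, and it realizes the moduli description of point (2) of the Introduction.

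Next I isolate the Hodge-theoretic datum. Decompose $H^2(X_t;\Qb)$ under $\mu_2\times\Af_5$: the invariant part is spanned by algebraic classes (the image of $\pi_t^*H^2(S)$ and the resolution curves over the cusps), whereas the holomorphic $2$-form is anti-invariant under the deck involution---necessarily so for a double cover whose quotient $S$ is rational---and is fixed by $\Af_5$, the unique one-dimensional $\Af_5$-representation being trivial. So the periods live on the primitive sublattice $\Lambda\subset H^2(X_t;\Zb)$ underlying the $\Af_5$-invariant part of the deck-anti-invariant cohomology. A Lefschetz fixed-point count over $\Af_5$, using the $\Af_5$-fixed loci on $S$ and their lifts to $X_t$, shows that $\Lambda$ has signature $(2,\ast)$ with a one-dimensional Hermitian period domain $\Dc_\Lambda$, and that the weight-two variation of Hodge structure on $\Lambda$ coincides, up to a Tate twist, with the $\Af_5$-isotypic constituent of $H^1$ of the fibers of $\Cs/\Bc^\circ$ analyzed in the proof of Theorem~\ref{theorem:arithmeticity}; this comparison is the standard identification of the anti-invariant cohomology of a branched double cover with the variable cohomology of its branch data. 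In particular the monodromy acting on $\Lambda$ is the monodromy group of the Wiman--Edge pencil, and the $\Qb$-structure of $\Lambda$ is exactly that of $\Oc_o\subset\Qb(\sqrt5)$, so the relevant arithmetic quotient is the Shimura sub-datum of the Hilbert modular surface of \S\ref{subsection:period1} determined by the $\mu_2\times\Af_5$-symmetry.

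Now for the period map and the conclusion. Over $\Bc^\circ$ the $\Lambda$-polarized family produces a holomorphic period map $P\colon\Bc^\circ\to\Gamma'\bs\Dc_\Lambda$ to an arithmetic quotient of the one-dimensional domain $\Dc_\Lambda$. By the surjectivity of the period map for lattice-polarized K3 surfaces, together with Theorem~\ref{theorem:arithmeticity}---which identifies $\Gamma'$ with the monodromy group, a finite-index subgroup of $\SL_2(\Oc_o)$, and shows that $P$ is surjective on fundamental groups---the map $P$ is finite and dominant, and the global Torelli theorem for K3 surfaces forces it to have degree one. Hence $P$ is an isomorphism of $\Bc^\circ$ onto a finite-covolume arithmetic quotient of $\Dc_\Lambda$. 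Finally the local monodromies around the five singular members are quasi-unipotent, so by the extension theorem for period maps (Borel, Schmid) $P$ extends to a morphism $\Bc=\Pb^1\to\Gamma'\bs\Dc_\Lambda^{\mathrm{BB}}$ into the Baily--Borel compactification, again an isomorphism; this realizes $\Bc$ as a (necessarily genus-zero) Shimura curve and at the same time identifies the five singular members with its boundary points.

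The step I expect to be the main obstacle is the geometry over the singular locus together with the exact determination of $\Lambda$. One has to analyze the branched double covers of the five distinguished members---in particular the $10$-line Petersen curve and the five-conic curve---carry out the correct simultaneous resolution, and check that the associated variation of Hodge structure has quasi-unipotent (indeed, explicitly describable) local monodromy, so that the period map extends across the cusps and is an isomorphism there. This analysis must moreover be precise enough to pin down the lattice $\Lambda$ and its arithmetic on the nose, not merely up to commensurability, so that the Shimura datum is identified with the $\Oc_o$-one coming from Theorem~\ref{theorem:arithmeticity} rather than with one merely commensurable to it.
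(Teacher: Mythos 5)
Your construction of the family (double cover of $S$ branched along $C_t\in|-2K_S|$, unique lift of $\Af_5$ commuting with the deck involution) matches the paper's. But the Hodge-theoretic core of your argument contains a genuine error: you identify the variation of Hodge structure carrying the K3 periods with (a Tate twist of) the weight-one variation on $H^1(C_t)$ from Theorem~\ref{theorem:arithmeticity}, and accordingly place the period map into a Shimura sub-datum of the Hilbert modular surface, with the five singular members going to Baily--Borel cusps. This is not what happens, and the paper explicitly warns against it in the remark following the proof. The decisive computation, which your proposal skips, is the $\Af_5$-isotypic decomposition of the anti-invariant cohomology: a Lefschetz-number count gives $H^2(\hat S_t;\Qb)^-\cong 3\mathbf{1}\oplus V\oplus 2W$, which contains \emph{no} copy of the six-dimensional module $E_\Qb$, whereas $H^1(C_t;\Qb)\cong E_\Qb^{\oplus 2}$. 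So the transcendental periods live on the rank-three \emph{trivial}-isotypic summand of signature $(2,1)$, and the monodromy acting there is not the monodromy group of the Wiman--Edge pencil; indeed the representation $\Gamma\to M$ cannot be injective, since the two monodromies have different orders around the punctures. The ``standard identification of the anti-invariant cohomology of a branched double cover with the variable cohomology of its branch data'' that you invoke fails here as a statement about $\Af_5$-modules, and it is exactly the step on which the theorem turns.

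This error propagates to your treatment of the boundary. Over the five singular members the double covers acquire only ordinary double points (rational double points), so after minimal resolution they are still K3 surfaces; the period map is defined on \emph{all} of $\Bc$, the local monodromies on the transcendental lattice are of finite order, and there are no cusps. The paper's endgame is the opposite of yours: the period map $\Bc\to M\backslash\Cf$ is injective by the Torelli theorem for K3 surfaces, hence an isomorphism from the compact curve $\Bc\cong\Pb^1$ onto $M\backslash\Cf$; compactness of the target then forces the signature-$(2,1)$ form not to represent zero, so the Shimura curve is of quaternionic (cocompact) type. Your appeal to Borel--Schmid extension into a Baily--Borel compactification with the five singular fibers as boundary points is therefore not just unnecessary but inconsistent with the actual geometry. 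To repair the argument you would need to (i) carry out the character computation pinning down $(H^2(\hat S_t;\Qb)^-)^{\Af_5}$ and its signature, and (ii) replace the ``cusps at the singular members'' picture by the observation that the family of polarized Hodge structures extends across them.
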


\para{Method of proof of Theorem~\ref{theorem:arithmeticity}}
As is usual with computations of monodromies, the proof of Theorem~\ref{theorem:arithmeticity} consists of two main steps.  First, in \S\ref{sect:monodromy1}, we find constraints on the monodromy in order to narrow its target to a copy of $\SL_2(\Oc_o)$; such restrictions come not only from the necessary commutation with the $\Af_5$-actions on the members of the family, but also from torsion in the Picard group of $C_0$, as well as an involutive structure coming from the extra symmetry of the Wiman curve.  The final result is to prove that in fact 
 $\rho$ takes its values in $\SL_2(\Oc_o)$.

The second step in the proof of Theorem~\ref{theorem:arithmeticity}, which we accomplish in \S\ref{sect:monodromy2}, is to prove that the image of $\rho$ has finite index. To do this, we first use Picard-Lefschetz theory to find the conjugacy classes of the local monodromies about each of the $5$ cusps of ${\mathcal B}^\circ$.  These cusps correspond to the singular members of $\Cs$: two irreducible curves, $6$-noded rational curves $C_{ir}$ and $C'_{ir}$; two curves $C_c$ and $C'_c$, each consisting of $5$ conics whose intersection graph is the complete graph on $5$ vertices; and a union $C_\infty$ of $10$ lines whose intersection graph is the Petersen graph. The group $\Sf_5$ acts on $\Cs$ with $\Af_5$ leaving each member of $\Cs$ invariant.  This action has two $\Sf_5$-invariant members: the singular curve $C_\infty$ and the Wiman curve $C_0$. The main effort of \S\ref{sect:monodromy2} is to understand these degenerations and the structures they preserve.  After improving ``up to conjugacy'' to actual elements, we are able to apply an arithmeticity criterion due to Benoist-Oh \cite{BO} to deduce Theorem~\ref{theorem:arithmeticity}.

%\subsection*{Acknowledgements. } 

\section{Some algebra of $\Z\Af_5$-modules}\label{sect:some algebra} 
\label{section:algebra}

We found in an earlier  paper (\cite{DFL}, Cor.\ 3.6) that the first homology group $H_1(C_o; \Cb)$ of the Wiman curve is, as a $\Cb\Sf_5$-module, twice an irreducible representation $E_\Cb$ of degree six. Since it is known that the characters of the irreducible $\Qb\Sf_5$-modules are those of the irreducible $\Cb\Sf_5$-modules,  it follows that $H_1(C_o; \Qb)$  is as a $\Qb\Sf_5$-module also twice an irreducible representation of degree six
(denoted here by $E_\Qb$). This implies that if we replace $C_o$ by an arbitrary smooth member $C$, then it is still true that $H_1(C; \Qb)\cong E_\Qb^2$ as $\Q\Af_5$-modules.

The main goal of this section and the subsequent one  is to lift this to the integral level, while also taking into account the intersection pairing. In other words, we want to identify $H_1(C_o)$ as a symplectic $\Z\Sf_5$-module. This will be used in \S\ref{sect:monodromy2}  to determine the monodromy of the Wiman-Edge pencil. The present section is only concerned with the algebraic aspects of the symplectic $\Z\Sf_5$-modules that appear here.
\\

\emph{Convention.} In this section we identify $\Af_5$ with a  triangle  group defined by  the group of motions of a regular  icosahedron. By this we mean that we make use of the following presentation of $\Af_5$:
a set of generators is
\begin{align*}
\sigma_5&=(01234), \\ 
\sigma_2&=(04)(23),\\ 
\sigma_3&=(142)
\end{align*}
and a  complete set of relations is given by prescribing their  order (indicated by the subscript) and the identity $\sigma_2\sigma_3\sigma_5=1$.
We make this more concrete in Remark \ref{rem:isocreal}.

\subsection{Irreducible $\Z\Sf_5$-modules of degree six}
Recall that the reflection representation of $\Sf_5$ is the quotient of its `natural' representation on $\Cb^5$ (given by permutation of its basis vectors) modulo the main diagonal 
$\C\hookrightarrow\C^5$ (which is a trivial representation). It is irreducible and so is the degree 6 representation  $E_\C:=\wedge^2(\C^5/\C)$. 
It is clear that this construction  is defined over $\Q$ (even over $\Z$) and so let us write  $E_\Q$ for the irreducible $\Q\Sf_5$-module 
$\wedge^2( \Q^5/\Q)$. If we consider this a  $\Q\Af_5$-module, it is still irreducible, but if we extend the scalars to $\Q(\sqrt{5})$, it will split  into two absolutely irreducible representations of dimension $3$. To be precise (we will recall and explain this below), the endomorphism
ring $K:=\End_{\Q\Af_5}E_\Q$  is isomorphic to $\Q(\sqrt{5})$ and  if we  tensor $E_\Q$ over $K$ with $\Rb$ via one of the two field embeddings 
$\sigma, \sigma': K\hookrightarrow \Rb$, we obtain real forms of  the two complex $\Af_5$-representations  of degree $3$ that differ from each 
other by an outer automorphism of  $\Af_5$  (these were denoted in \cite{DFL} by $I$ and $I'$.)

\smallskip
An obvious  integral form of $E_\Q$ is  the $\Z\Sf_5$-module $\wedge^2(\Z^5/\Z)$. 
If $\{f_i\}_{i\in \Z/5}$ is  the standard basis of $\Z^5$ and $f_{ij}$ denotes  the image  of $f_i\wedge f_j$ ($i\not=j$) in $\wedge^2(\Z^5/\Z)$, then  the set $\{ f_{ij}\}_{i\not=j}$ generates $\wedge^2(\Z^5/\Z)$ and  a complete set of  linear relations among them is  $f_{ij}=-f_{ji}$ and $\sum_j f_{ij}=0$. Note that $\{ f_{ij}\}_{i\not=j}$ is an $\Af_5$-orbit and consists of $10$ antipodal pairs. We take as our integral form the $\Z\Sf_5$-submodule $E_o$ of $\wedge^2(\Z^5/\Z)$ defined as follows. Let $\phi: \Z^5\to \Z$ be  the coordinate sum (this is a generator of  $ \Hom ( \Z^5, \Z)^{\Sf_5}$) and denote by $E_o$  the image of the $\Z \Sf_5$-homomorphism  
\[
\delta: \wedge^3(\Z^5)\xrightarrow{\iota_\phi}  \wedge^2 (\Z^5)\to \wedge^2(\Z^5/\Z),\tag{Definition $E_o$}
\]
where $\iota_\phi$ is the inner product with $\phi$ and the second map is the obvious one. In other words, $E_o$ is generated by the vectors $\delta(f_i\wedge f_j\wedge f_k)=f_{ij}+f_{jk}+f_{kj}$. The lattice  $E_o$ comes with an $\Af_5$-invariant basis, given  up to signs:

%Hence $f_{03}$ generates $E_o$ as a $\Z\Af_5$-module. Indeed, $f_{03}$  is fixed by the $3$-cycle $\sigma_3=(142)\in \Af_5$ and since  the orbit of $f_{03}$ has $20$ elements, the subgroup generated  $\sigma_3$ must in fact be its stabilizer.

\begin{lemma}\label{lemma:generator}
Let $e:=\sum_i f_{i,i+1}\in \wedge^2(\Z^5/\Z)$. Then the $\Af_5$-orbit of  $e$ is the union of a basis of $E_o$ and its antipode.
In particular, there exists a (unique) $\Af_5$-invariant inner product 
\[
s: E_o\times E_o\to \Z
\]
for which this basis is orthonormal.
\end{lemma}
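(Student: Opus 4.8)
\para{Proof strategy for Lemma~\ref{lemma:generator}}
The plan is to pin down the $\Af_5$-orbit $\Omega$ of $e$ as a set of $12$ vectors, invariant under $x\mapsto-x$, that lies inside $E_o$ and that becomes, after discarding one vector from each antipodal pair, a $\Z$-basis of $E_o$; the inner product $s$ then comes for free.

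First I would compute $\Stab_{\Af_5}(e)$. The generator $\sigma_5=(01234)$ permutes the five summands $f_{i,i+1}$ of $e$ cyclically, so $\sigma_5\cdot e=e$; on the other hand the even involution $\tau:=(14)(23)$, which acts on $\Z/5$ by $i\mapsto-i$, sends $f_{i,i+1}$ to $f_{-i,-i-1}=-f_{-i-1,-i}$, so that $\tau\cdot e=-e$. Since $\langle\sigma_5\rangle$ has order $5$, a subgroup of $\Af_5$ containing it has order $5$, $10$ or $60$, hence equals $\langle\sigma_5\rangle$, the unique dihedral group $D_5$ of order $10$ normalizing it, or $\Af_5$; thus $|\Omega|=[\Af_5:\Stab_{\Af_5}(e)]\in\{12,6,1\}$. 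The value $1$ is excluded because $E_o\otimes\Q$ is a nontrivial irreducible $\Q\Af_5$-module and so has no nonzero fixed vector, and the value $6$ is excluded because $\tau\in D_5$ while $\tau\cdot e=-e\ne e$. Hence $|\Omega|=12$. As $-e=\tau\cdot e\in\Omega$ and $\Omega$ is a single orbit, $-x\in\Omega$ for every $x\in\Omega$; and since $\wedge^2(\Z^5/\Z)$ is torsion free, $\Omega$ splits into $6$ antipodal pairs. (In the identification of $\Af_5$ with the icosahedral rotation group from the Convention above, $\Omega$ is precisely the set of $12$ vertices of the icosahedron, grouped into its $6$ pairs of antipodal vertices.) Choose $B\subset\Omega$ picking out one vector from each pair, so $|B|=6$ and $\Omega=B\sqcup(-B)$.

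Next I would prove that $B$ is a $\Z$-basis of $E_o$. Writing $t_{ijk}:=\delta(f_i\wedge f_j\wedge f_k)$, the formula for $\delta$ on generators yields the telescoping identity $\delta\bigl(f_0\wedge(f_1\wedge f_2+f_2\wedge f_3+f_3\wedge f_4)\bigr)=e$, so $e\in E_o$, and as $E_o$ is $\Af_5$-stable we get $\Omega\subset E_o$. Because $\Omega$ spans a nonzero $\Q\Af_5$-submodule of the irreducible module $E_o\otimes\Q$, its span is all of $E_o\otimes\Q$; so the six vectors of $B$ form a $\Q$-basis of $E_o\otimes\Q$, in particular are $\Z$-independent, and $\Z B\subseteq E_o$ is a sublattice of finite index. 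To see that this index is $1$, I would use that $E_o$ is generated by the vectors $t_{ijk}$, that $\Af_5$ acts transitively on these up to sign (being $3$-transitive on $\{0,\dots,4\}$), and that $\Z B$, being a lattice, is closed under negation; it then suffices to express a single $t_{ijk}$ as a $\Z$-combination of elements of $\Omega$. Writing $e$ and a few of its $\Af_5$-translates $g\cdot e$ in coordinates with respect to the $\Z$-basis $\{f_{jk}:1\le j<k\le4\}$ of $\wedge^2(\Z^5/\Z)$ and solving a small integral linear system furnishes such a relation; for instance one checks that $t_{012}=(013)\!\cdot\!e+(014)\!\cdot\!e-(023)\!\cdot\!e$. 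Hence $E_o\subseteq\Z B$ and $\Z B=E_o$. (Alternatively: the absolute value of the determinant of the matrix expressing $B$ in the basis $\{f_{jk}\}$ equals the index $[\wedge^2(\Z^5/\Z):E_o]$---both are $5^3$---which also gives $[E_o:\Z B]=1$.)

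Finally, let $s$ be the symmetric bilinear form on $E_o$ for which $B$ is orthonormal. It is $\Z$-valued and positive definite, since every vector of $E_o=\Z B$ is an integer combination of $B$, and it does not depend on the choice of $B$, because replacing some $b\in B$ by $-b$ leaves it unchanged. It is $\Af_5$-invariant: any $g\in\Af_5$ permutes $\Omega=B\sqcup(-B)$, hence maps $B$ onto $B$ up to signs, and an orthonormal form is unaffected by sign changes of the basis vectors. Uniqueness is automatic, a bilinear form being determined by its values on a basis. The step I expect to be the genuine obstacle is the equality $\Z B=E_o$: the $\Af_5$-orbit of a single well-chosen vector need not generate the lattice containing it, and here $E_o$ is itself a proper sublattice---of index $5^3$---of the obvious lattice $\wedge^2(\Z^5/\Z)$, so the claim has to be settled by an explicit finite computation rather than by a soft argument; the rest is routine once the orbit $\Omega$ has been understood.
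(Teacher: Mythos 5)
Your proposal is correct and follows essentially the same route as the paper: stabilizer computation plus irreducibility of $E_\Qb$ to pin down the orbit as $6$ antipodal pairs, the telescoping identity $\delta(f_0\wedge f_1\wedge f_2)+\delta(f_0\wedge f_2\wedge f_3)+\delta(f_0\wedge f_3\wedge f_4)=e$ to get $e\in E_o$, and an explicit integral relation expressing a generator $\delta(f_i\wedge f_j\wedge f_k)$ of $E_o$ in terms of orbit elements to get the reverse inclusion (the paper uses $e+(04)(23)e+(124)e=\delta(f_1\wedge f_2\wedge f_4)$ where you use $t_{012}=(013)e+(014)e-(023)e$, which I checked is also valid). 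The only substantive addition is your correct observation, made explicit, that the index $[\wedge^2(\Z^5/\Z):E_o]$ equals $5^3$, but this is offered only as an alternative and is not needed.
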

\begin{proof}
We first note that  that $e$ is fixed by the $5$-cycle $(01234)$ and that $(14)(23)$ takes $e$ to $-e$. So the $\Af_5$-orbit of $e$ consists of antipodal pairs, at most 
$60/(2.5)=6$ in number. Since $E_\Q$ is irreducible, it must be spanned by this orbit and so we have equality:  we have $6$ antipodal pairs and  the $\Af_5$-stabilizer of $e$ is generated by $(01234)$. 
It remains to show that this orbit spans $E_o$.

The identity 
$(f_{01}+f_{12}+f_{20})+(f_{02}+f_{23}+f_{30})+(f_{03}+f_{34}+f_{40})=e$ shows that $E_o$ contains $e$ and hence  the $\Z \Af_5.e$-submodule  generated  by  $e$.
On the other hand, it is straightforward to check that $e$ and its translates under $(04)(23)$  and $(124)$
sum up to $\delta(f_1\wedge f_2\wedge f_4)=f_{12}+f_{24}+f_{41}$ and since $\wedge^3 (\Z^5)$ is generated by the 
$\Af_5$-orbit of $f_1\wedge f_2\wedge f_4$, it follows that $\Z \Af_5. e$ contains $E_o$. 
\end{proof}

\begin{remark} 
The $\Af_5$-orbit of $e$ and the inner product $s$ determine each other, but  this $\Af_5$-orbit is not a $\Sf_5$-orbit, and 
so $s$ is not $\Sf_5$-invariant. Indeed, the $\Sf_5$-stabilizer of $e$ is its $\Af_5$-stabilizer (namely the cyclic group of order $5$
generated by $(01234)$) and so the $\Sf_5$-orbit of $e$ has size $24$. On the other hand, it is clear that the vectors in  $E_o$ that have unit length for $s$ make up the $\Af_5$-orbit of $e$, and so $s$ cannot be $\Sf_5$-invariant.
\end{remark}

For later use, we show that there is an equivariant map from $E_o$ to  the $\Fb_5\Sf_5$-module $N_5$ introduced in Subsection \ref{subsect:torsionpic}.
In terms of our basis, this module is the set of $\Zb$-linear combinations of  $f_0, \dots, f_4$ with coordinate sum zero, 
modulo the sublattice generated by the elements $(-5f_i+\sum_{j\in \Z/5} f_j)_{i\in \Z/5}$.
 Since $\{f_i\wedge f_j\wedge f_k\}_{0\le i<j<k\le 4}$ is a basis of $\wedge^3\Z^5$, we can define
a  homomorphism  $\tilde\psi: \wedge^3\Z^5\to N_5$ by assigning to $f_i\wedge f_j\wedge f_k$ the image of
$f_l-f_m$ in $N_5$ which is characterized by the property that  $(i,j,k,l,m)$ is an even permutation of $(0,1,2,3,4)$. 
This map is clearly onto and it is easy to see that it is also $\Sf_5$-equivariant.

\begin{lemma}\label{lemma:psi}
The homomorphism $\tilde\psi$ factors through a surjection $\psi: E_o\to N_5$ of $\Z\Sf_5$-modules.
\end{lemma}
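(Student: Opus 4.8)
The plan is to identify $\ker\delta$ explicitly, check by a short computation that $\tilde\psi$ annihilates it, and then read off $\psi$ together with its properties by abstract nonsense. Write $V_\Z:=\ker(\phi\colon\Z^5\to\Z)$ for the sum-zero sublattice (of rank $4$) and $V_\Q:=V_\Z\otimes\Q$. The claim to aim for is $\ker\delta=\wedge^3 V_\Z$.

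First I would establish that identification. Since $\delta$ factors as $\iota_\phi$ followed by the projection to $\wedge^2(\Z^5/\Z)$, and $\iota_\phi$ kills $\wedge^3 V_\Z$ (each of the three terms of $\iota_\phi(v_1\wedge v_2\wedge v_3)$ carries a factor $\phi(v_i)=0$), one gets $\wedge^3 V_\Z\subseteq\ker\delta$. Over $\Q$, $\delta$ is a nonzero $\Q\Sf_5$-homomorphism into the irreducible module $E_\Q=\wedge^2(\Q^5/\Q)$, hence surjective, so $\dim\ker(\delta\otimes\Q)=\dim\wedge^3\Q^5-\dim E_\Q=10-6=4$; since it contains the $4$-dimensional $\wedge^3 V_\Q$, the two agree. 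Finally, the internal splitting $\Z^5=V_\Z\oplus\Z f_0$ gives $\wedge^3\Z^5=\wedge^3 V_\Z\oplus(\wedge^2 V_\Z\wedge f_0)$, so $\wedge^3 V_\Z$ is saturated in $\wedge^3\Z^5$; combined with $\ker(\delta\otimes\Q)=\wedge^3 V_\Q$ this forces $\ker\delta=\wedge^3 V_\Q\cap\wedge^3\Z^5=\wedge^3 V_\Z$.

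Next I would show $\wedge^3 V_\Z\subseteq\ker\tilde\psi$. Take $g_i:=f_i-f_0$ for $i\in\{1,2,3,4\}$ as a $\Z$-basis of $V_\Z$. Then $\Stab_{\Sf_5}(0)\cong\Sf_4$ permutes $g_1,\dots,g_4$, so the $\Sf_4$-orbit of $g_1\wedge g_2\wedge g_3$ is, up to signs, the whole standard basis $\{g_a\wedge g_b\wedge g_c\}$ of $\wedge^3 V_\Z$. As $\tilde\psi$ is $\Sf_5$-equivariant it therefore suffices to check $\tilde\psi(g_1\wedge g_2\wedge g_3)=0$ in $N_5$. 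Expanding,
\[
g_1\wedge g_2\wedge g_3=f_1\wedge f_2\wedge f_3-f_0\wedge f_1\wedge f_2+f_0\wedge f_1\wedge f_3-f_0\wedge f_2\wedge f_3,
\]
and applying the defining even-permutation rule of $\tilde\psi$ termwise, the four resulting classes in $N_5$ add up to the class of $4f_4-f_0-f_1-f_2-f_3=-(-5f_4+\sum_j f_j)$, which vanishes by the very definition of $N_5$ as a quotient by the lattice generated by the vectors $-5f_i+\sum_j f_j$. I expect this parity-and-sign bookkeeping — the even-permutation convention for $\tilde\psi$ played against the antisymmetry of $\wedge$ — to be the only step needing genuine care; the rest is formal.

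Finally, $\ker\delta=\wedge^3 V_\Z\subseteq\ker\tilde\psi$ yields a unique homomorphism $\psi\colon E_o\to N_5$ with $\psi\circ\delta=\tilde\psi$; it is surjective because $\tilde\psi$ is, and it is $\Z\Sf_5$-linear because $\delta$ is a surjective $\Z\Sf_5$-map and $\tilde\psi$ is $\Sf_5$-equivariant (for $\gamma\in\Sf_5$ and $e=\delta(x)$ one has $\psi(\gamma e)=\psi(\delta(\gamma x))=\tilde\psi(\gamma x)=\gamma\tilde\psi(x)=\gamma\psi(e)$).
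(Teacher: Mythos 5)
Your proof is correct, and structurally it follows the same plan as the paper's: show that $\ker\delta$ is contained in $\ker\tilde\psi$, then factor. But the key step --- identifying $\ker\delta$ --- is handled differently, and your version is the one that actually works. The paper asserts that $\ker\delta$ is generated by the $\Sf_5$-orbit of $f_0\wedge f_1\wedge (f_2+f_3+f_4)$ and checks that $\tilde\psi$ kills this element; however, that element is not in $\ker\delta$. Indeed $f_0\wedge f_1\wedge(f_2+f_3+f_4)=f_0\wedge f_1\wedge\sum_j f_j$, and
\[
\delta\bigl(f_0\wedge f_1\wedge(f_2+f_3+f_4)\bigr)=\sum_{k=2}^{4}\bigl(f_{01}+f_{1k}+f_{k0}\bigr)=3f_{01}+f_{01}+f_{01}=5f_{01}\neq 0
\]
in the torsion-free lattice $\wedge^2(\Z^5/\Z)$ (using the relation $\sum_j f_{ij}=0$ twice); in fact the span of that $\Sf_5$-orbit has rank $6$, whereas $\ker\delta$ has rank $4$, so it cannot generate the kernel. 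Your identification $\ker\delta=\wedge^3\ker(\phi)$ --- containment because $\iota_\phi$ vanishes on sum-zero vectors, equality over $\Q$ by the count $10-6=4$ resting on the irreducibility of $E_\Q$, and equality over $\Z$ by the saturation coming from the splitting $\Z^5=\ker(\phi)\oplus\Z f_0$ --- is correct; your $\Sf_4$-orbit of $(f_1-f_0)\wedge(f_2-f_0)\wedge(f_3-f_0)$ does exhaust a basis of this kernel up to sign; and the termwise evaluation $\tilde\psi\bigl((f_1-f_0)\wedge(f_2-f_0)\wedge(f_3-f_0)\bigr)=4f_4-(f_0+f_1+f_2+f_3)$, a generator of the sublattice by which $N_5$ is defined, checks out. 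So your argument is not merely an alternative route: it supplies the correct description of $\ker\delta$ that the paper's proof needs, and the factorization and equivariance at the end are routine as you say.
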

\begin{proof}
We must show that the kernel of the map $\wedge^3(\Z^5)\xrightarrow{\iota_\phi}  \wedge^2 (\Z^5)\to \wedge^2(\Z^5/\Z)$ is contained
in the kernel of $\tilde\psi$. The kernel of the former is  generated by the  $\Sf_5$-orbit of 
$f_0\wedge f_1\wedge (f_2 +f_3+f_4)$ and $\tilde\psi(f_0\wedge f_1\wedge (f_2 +f_3+f_4))=(f_3-f_4)+(f_4-f_2)+(f_2-f_3)=0$.
\end{proof}

\subsubsection*{Some special orbits in $E_o$}
We now select an element from each antipodal pair in the $\Af_5$-orbit of $e$:
\begin{align*}
e_0:=\sigma_2(e) &=f_{41}+ f_{13}+f_{32}+f_{20}+f_{04}\\
e_i & =\sigma_5^ie_0, \; (i\in \Z/5).
\end{align*}
The icosahedral generators act on this basis as follows:
\begin{align*}
\sigma_5:&\,  e_0\mapsto e_1\mapsto e_2\mapsto e_3\mapsto e_4\mapsto e_0 \text{ (fixes $e$),}\\
\sigma_2:&\,  e\leftrightarrow e_0\,  ; \, e_1\leftrightarrow e_4\,  ; \, e_2\leftrightarrow -e_2\,  ; \,e_3\leftrightarrow -e_3 \text{ (fixes $e+e_0$),}\\
\sigma_3:&\, e\mapsto  e_0\mapsto e_1\mapsto e\,  ; \, e_2\mapsto e_4\mapsto -e_3\mapsto e_2 \text{ (fixes $e+e_0+e_1$).}
%\text{(order 5,  fixes $e_2$) }\sigma'_5:  e\mapsto e_1\mapsto -e_4\mapsto -e_0\mapsto e_3\mapsto e,
\end{align*}
This is the matrix representation of $\Af_5$ that we will use. We first note that 
the sublattice $E\subset E_0$ consisting   of integral linear combinations of our basis with even coefficient sum is $\Af_5$-invariant and of  index $2$ in $E_o$. %It is in fact $\Sf_5$-invariant, for $\delta(f_0\wedge f_1\wedge  

The next lemma  reproduces  some of the preceding in terms of this basis:

\begin{lemma}\label{lemma:generators}
The $\Af_5$-stabilizer of $e$ is generated by $\sigma_5$. Its $\Af_5$-orbit  generates $E_o$ over $\Zb$ and consists of  the  6 antipodal pairs  $\Delta_\ir:=\{\pm e,\pm e_0, \dots ,\pm e_4\}$. 

The $\Af_5$-stabilizer of $e+e_0$ is generated by $\sigma_2$. Its $\Af_5$-orbit  generates $E$ over $\Zb$ and  consists of the 15 antipodal pairs
$\Delta_\infty:=\{\pm(e+e_i), \pm(e_i+e_{i+1}), \pm(e_{i+1}-e_{i+1})\}_i$. 

The $\Af_5$-stabilizer of $e+e_0+e_1$ is generated by 
 $\sigma_3$. Its $\Sf_5$-orbit equals its $\Af_5$-orbit,  generates $E_o$ over $\Zb$ and consists of  the 10 antipodal pairs 
$\Delta_c:=\{\pm(e+e_{i}+e_{i+1}),\pm(e_i -e_{i-2}-e_{i+2})\}_i$. 

The lattice $E$ is $\Sf_5$-invariant.
\end{lemma}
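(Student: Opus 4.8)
\emph{Overall plan.} I would reduce all four assertions of Lemma~\ref{lemma:generators} to finite computations with the displayed matrix representation of $\Af_5$ on the basis $\{e,e_0,\dots,e_4\}$, supplemented by a return to the ambient module $\wedge^2(\Z^5/\Z)$ whenever the full group $\Sf_5$ (and not merely $\Af_5$) intervenes. The displayed action table shows at once that $\sigma_5$ fixes $e$, that $\sigma_2$ fixes $e+e_0$ and that $\sigma_3$ fixes $e+e_0+e_1$, so each of the three $\Af_5$-stabilizers contains $\langle\sigma_5\rangle$, $\langle\sigma_2\rangle$, $\langle\sigma_3\rangle$ respectively. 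To obtain the orbits I would first apply the powers of $\sigma_5$: from $e+e_0$ these produce the five vectors $e+e_i$, and from $e+e_0+e_1$ the five vectors $u_i:=e+e_i+e_{i+1}$; then applying $\sigma_2$ and $\sigma_3$ to these and iterating is a bounded bookkeeping that reproduces the displayed lists $\Delta_\infty$ (fifteen antipodal pairs) and $\Delta_c$ (ten antipodal pairs), each manifestly closed under negation. For $e$ itself the orbit was already found in Lemma~\ref{lemma:generator}, and one only matches it with $\Delta_\ir=\{\pm e,\pm e_0,\dots,\pm e_4\}$ via $e_0=\sigma_2(e)$, $e_i=\sigma_5^ie_0$. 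Since $|\Delta_\ir|=12$, $|\Delta_\infty|=30$ and $|\Delta_c|=20$, the orbit--stabilizer theorem forces these stabilizers to have orders $5$, $2$, $3$, hence to coincide with $\langle\sigma_5\rangle$, $\langle\sigma_2\rangle$, $\langle\sigma_3\rangle$.

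\emph{Integral generation.} Because $E_\Q$ is irreducible, each orbit already spans its lattice over $\Q$, so the generation statements only require a $\Z$-basis of the target exhibited as integral combinations of orbit vectors. For $\Delta_\infty$: every listed vector has even coefficient sum, so $\Z\Delta_\infty\subseteq E$; conversely $2e=(e+e_0)+(e+e_1)-(e_0+e_1)\in\Z\Delta_\infty$, and the six vectors $2e,e+e_0,\dots,e+e_4$ span a sublattice of index $2$ in $E_o$ (their transition matrix to $\{e,e_0,\dots,e_4\}$ has determinant $\pm2$) which is contained in $E$, hence equals $E$. For $\Delta_c$: the differences $u_i-u_{i+1}=e_i-e_{i+2}$ generate the rank-$4$ lattice $\{\sum a_ie_i:\sum a_i=0\}$, and subtracting $e_0-e_2$ from the second-family vector $e_0-e_3-e_2\in\Delta_c$ yields $-e_3$; hence every $e_i$, and then $e=u_0-e_0-e_1$, lies in $\Z\Delta_c$, so $\Z\Delta_c=E_o$. (The claim $\Z\Delta_\ir=E_o$ is the last line of Lemma~\ref{lemma:generator}.)

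\emph{The $\Sf_5$-statements.} For the assertion that the $\Sf_5$-orbit of $e+e_0+e_1$ equals its $\Af_5$-orbit, I would pass back to $\wedge^2(\Z^5/\Z)$, where a short cancellation gives $e+e_0+e_1=\delta(f_4\wedge f_1\wedge f_2)=f_{41}+f_{12}+f_{24}$. The elements $\delta(f_i\wedge f_j\wedge f_k)$ form, up to sign, one vector for each of the $\binom{5}{3}=10$ three-element subsets of $\Z/5$, and they constitute a single $20$-element orbit under both $\Af_5$ and $\Sf_5$: each group is transitive on three-element subsets, and the relevant $\Af_5$-stabilizer of a subset already contains a sign-reversing element. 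Comparing cardinalities, this common orbit is $\Delta_c$. Finally, for the $\Sf_5$-invariance of $E$: since $\Af_5$ is normal in $\Sf_5$, for any transposition $\tau$ the lattice $\tau(E)$ is again an $\Af_5$-invariant sublattice of index $2$ in $E_o$, so it suffices to prove that $E$ is the \emph{unique} such sublattice, equivalently that $(E_o/2E_o)^{\Af_5}$ has $\F_2$-dimension $1$. This is immediate: a class fixed by $\sigma_5$ must have the form $c\bar e+c'(\bar e_0+\cdots+\bar e_4)$, and among the four such classes only $0$ and $\bar e+\bar e_0+\cdots+\bar e_4$ are also fixed by $\sigma_2$, while $\sigma_2$ and $\sigma_5$ generate $\Af_5$ (one has $\sigma_3=\sigma_2\sigma_5^{-1}$).

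\emph{Main obstacle.} The orbit enumerations are routine but lengthy; the two places that need genuine care are the integrality of the generation statements (locating the explicit $\Z$-combinations above) and, above all, the $\Sf_5$-invariance of $E$, which is invisible from the $\Af_5$-adapted basis and is best deduced from the uniqueness of the $\Af_5$-invariant index-$2$ sublattice of $E_o$.
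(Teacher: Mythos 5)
Your proposal is correct and, for the stabilizer and orbit-enumeration parts, follows essentially the same route as the paper (fix the obvious elements, bound the orbit by orbit--stabilizer, and verify the listed vectors by direct computation in the basis $\{e,e_0,\dots,e_4\}$). It diverges genuinely in three places, each time defensibly. (a) For $\Z\Delta_c=E_o$ the paper first produces $e_4-e_1$, deduces that the span contains $E$, and then uses $e+e_0+e_1\notin E$ together with $[E_o:E]=2$; you instead extract $-e_3$ directly as $(e_0-e_3-e_2)-(e_0-e_2)$, which is shorter and avoids any appeal to $E$. (b) For the coincidence of the $\Sf_5$- and $\Af_5$-orbits of $e+e_0+e_1$, the paper exhibits the odd permutation $(03)$ in the stabilizer of $f_{12}+f_{24}+f_{41}$; you instead compute both stabilizers inside $\wedge^3\Z^5$ and match cardinalities ($20=20$). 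Both work; the paper's is one line, yours gives the extra information that $\Delta_c$ is exactly the set $\{\pm\delta(f_i\wedge f_j\wedge f_k)\}$. (c) The most substantive difference is the $\Sf_5$-invariance of $E$: the paper re-characterizes $E$ as the even-coefficient-sum combinations of the manifestly $\Sf_5$-permuted generators $\delta(f_i\wedge f_j\wedge f_k)$, whereas you prove that $E$ is the \emph{unique} $\Af_5$-invariant index-$2$ sublattice of $E_o$ by computing $(E_o/2E_o)^{\Af_5}$. Your argument is more conceptual and reusable; the one step worth making explicit is that invariant index-$2$ sublattices correspond to invariant nonzero functionals on $E_o/2E_o$, i.e.\ to nonzero elements of $\bigl((E_o/2E_o)^*\bigr)^{\Af_5}$, and that this dual space of invariants is identified with $(E_o/2E_o)^{\Af_5}$ via the $\Af_5$-invariant unimodular form $s$. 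With that sentence added, the proof is complete.
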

\begin{proof}
We already established the first assertion.

Since $\sigma_2$ stabilizes $e+e_0$, its orbit has at most $30$ elements. That  it contains the 15 pairs listed is straightforward to verify (for example, $\sigma_5^i(e+e_0)=e+e_i$ and then note that for $i=0,1,2,3,4$,  the vector $\sigma_2(e+e_i)$ equals
resp.\  $e_0+e$, $e_0+e_4$, $e_0-e_2$, $e_0-e_3$, $e_0+e_1$). This orbit is contained in $E$ and the  subset  $(e_1-e_2,e_2-e_3, e_3-e_4, e_4-e_0,e_0-e,e_0+e)$ of this orbit is a basis of  $E$.

We next consider the orbit of $e+e_0+e_1$. We compute
\[
e+e_0+e_1=f_{12}+f_{24}+ f_{41} 
\]
and this shows that $e+e_0+e_1$ is not only stabilized by $\sigma_3=(142)$, but also by the transposition  $(03)$. This implies that its  $\Sf_5$-orbit of
$e+e_0+e_1$ equals its $\Af_5$-orbit. This  orbit has at most $20$ elements and we show
that this orbit contains the $10$ pairs listed. We have $\sigma_2\sigma_5(e+e_0+e_1)=\sigma_2(e+e_1+e_2)=e_0+e_4-e_2$ and the $\sigma_5$-orbits of  $e+e_0+e_1$ and $e_0+e_4-e_2$ yield all the listed pairs up to sign. Since $\sigma_5^2\sigma_2\sigma_5^{-2}(e_0+e_4-e_2)=\sigma_5^2\sigma_2(e_3+e_2-e_0)=\sigma_5^2(-e_3-e_2-e)=-e_1-e_0-e$, it is also invariant under taking the opposite. This orbit 
generates $E_o$:  it contains  $\sigma_2(e+e_0+e_1)-(e+e_0+e_1)=e_4-e_1$ and with it then the span of the $\Af_5$-orbit of $e_4-e_1$, that is $E$.
Since $e+e_0+e_1\notin E$ and $E$  has index $2$ in $E_o$, we get all of $E_o$. 

As to the last statement, we have seen in the proof of Lemma \ref{lemma:generator} that $\delta(f_1\wedge f_2\wedge f_4)=e +e_0+e_1$. Since the 
$\Af_5$-orbit of the latter generates $E_o$, it follows that $E$ can also be characterized as the set of $\Z$-linear combinations of the 
$\delta(f_i\wedge f_j\wedge f_k)$ with even coefficient sum. This lattice is clearly $\Sf_5$-invariant.
\end{proof}

The $s$-dual of $E$, denoted $E^\vee$, consists by definition of the $e\in E_\Q$ with $s(e, e')\in\Z$ for all 
$e'\in E_o$. It contains $E_o$ as a sublattice of index $2$ and  a representative of the nontrivial coset is $\varepsilon:=\tfrac{1}{2}(e +\sum_{i\in \Z/5} e_i)$. We have $E^\vee/E\cong \Z/2\oplus \Z/2$  with the nonzero elements being represented by $\varepsilon$, $e$ and $\varepsilon+e$. The action of
$\Af_5$ on $E^\vee/E$ is trivial; this can be verified by computation, but this also follows from the fact that $\Af_5$ is simple so that any 
action of  $\Af_5$ on a $3$-element set must be trivial.

\begin{remark}
This situation is familiar in the theory of root systems:  the $\alpha\in E$ with $s(\alpha, \alpha)=2$  make up a root system of type  $D_6$  
that generates $E$ (so $E$ is the root lattice) and  $E^\vee$ the weight lattice. The fact that $\Af_5$ acts trivially on $E^\vee/E$ implies that $\Af_5$ embeds in the Weyl group of this root system.) %It should be possible to introduce these modules entirely in those terms.
\end{remark}

\subsection{Commutants of $\Z\Af_5$-modules}
We shall see that the $\Z\Af_5$-modules above admit endomorphisms that are nontrivial in the sense that they are not multiples of the identity.
One such element is $X\in \End(E^\vee)$, defined by 
\begin{align*}
X(\varepsilon):= & \varepsilon +e,\\ 
X(e_i):=& \varepsilon-(e_{i+2}+e_{i-2}).
\end{align*}

\begin{lemma}
The endomorphism  $X$  is selfadjoint  with respect to $s$ and satisfies $X^2=X+1$.  In particular, $X$  preserves $E$ and $E^\vee$ so that $X$ also acts on $E^\vee/E$.
We have $X(e)= \varepsilon$ and (so) $X$ acts transitively on the set  of the (3) nonzero elements of $E^\vee/E$. 
 Moreover, $X$ commutes with the $\Af_5$-action.
\end{lemma}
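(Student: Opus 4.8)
The plan is to verify the four assertions of the lemma by direct computation in the explicit basis $\{e, e_0, e_1, e_2, e_3, e_4\}$ of $E^\vee\otimes\Q$, carrying along only the minimal bookkeeping needed to make each claim transparent. First I would record the action of $X$ on all seven relevant vectors: from the defining formulas one has $X(\varepsilon)=\varepsilon+e$ and $X(e_i)=\varepsilon-(e_{i+2}+e_{i-2})$, and summing the latter over $i\in\Z/5$ gives $\sum_i X(e_i)=5\varepsilon-2\sum_i e_i$; combined with $\varepsilon=\tfrac12(e+\sum_i e_i)$ this yields $X(e)=X(2\varepsilon-\sum_i e_i)=(2\varepsilon+2e)-(5\varepsilon-2\sum_i e_i)=\varepsilon$ after substituting $\sum_i e_i=2\varepsilon-e$. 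That gives the stated value $X(e)=\varepsilon$ for free and also shows $X$ maps the integral vectors $e, e_i$ into $E^\vee$, while $X(\varepsilon)=\varepsilon+e\in E^\vee$ as well, so $X$ preserves $E^\vee$; preservation of $E$ then follows once we know $X^2=X+1$ and $X(e)=\varepsilon\notin E$, $X(\varepsilon)=\varepsilon+e$, together with the fact established before the lemma that $E$ is the index-$2$ sublattice of vectors of even coordinate sum (one checks $X$ sends even-sum combinations of the $e_i$ to even-sum combinations, the only subtlety being the $\varepsilon$-terms, which cancel in pairs).

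Next I would check the quadratic relation $X^2=X+1$. It suffices to verify it on a spanning set, say on $\varepsilon$ and on $e_0$ (the rest following by the cyclic symmetry $\sigma_5$, once we know $X$ commutes with $\sigma_5$ — or simply by checking all $e_i$, which is no harder). For $\varepsilon$: $X^2(\varepsilon)=X(\varepsilon+e)=X(\varepsilon)+X(e)=(\varepsilon+e)+\varepsilon=2\varepsilon+e$, while $(X+1)(\varepsilon)=(\varepsilon+e)+\varepsilon=2\varepsilon+e$. For $e_i$: $X^2(e_i)=X(\varepsilon)-X(e_{i+2})-X(e_{i-2})=(\varepsilon+e)-(\varepsilon-(e_{i+4}+e_i))-(\varepsilon-(e_i+e_{i-4}))=-\varepsilon+e+2e_i+e_{i+4}+e_{i-4}$; using $e_{i+4}=e_{i-1}$, $e_{i-4}=e_{i+1}$ and $\sum_j e_j=2\varepsilon-e$ so that $e_{i-1}+e_{i+1}=2\varepsilon-e-e_i-e_{i+2}-e_{i-2}$, this becomes $-\varepsilon+e+2e_i+(2\varepsilon-e-e_i-e_{i+2}-e_{i-2})=\varepsilon+e_i-(e_{i+2}+e_{i-2})=X(e_i)+e_i=(X+1)(e_i)$, as desired. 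This also shows that on $E^\vee/E$ the map $X$ satisfies $X^2=X+1$ over $\F_2$, i.e. $X^2+X+1=0$, so $X$ has order $3$ on the $3$-element set of nonzero classes, hence permutes them cyclically — concretely $e\mapsto\varepsilon\mapsto\varepsilon+e\mapsto e$, which is the transitivity claim.

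For self-adjointness with respect to $s$, I would use that $\{e, e_0,\dots,e_4\}$ is $s$-orthonormal (Lemma~\ref{lemma:generator}), so $s$ is the standard form in these coordinates and self-adjointness of $X$ is just symmetry of its matrix. Writing $X$ in the basis $(e, e_0, e_1, e_2, e_3, e_4)$: the relation $2\varepsilon=e+\sum_j e_j$ gives $X(e)=\varepsilon=\tfrac12(e+\sum_j e_j)$, and $X(e_i)=\varepsilon-(e_{i+2}+e_{i-2})=\tfrac12 e+\tfrac12\sum_j e_j-e_{i+2}-e_{i-2}$, whose $e_j$-coefficients are $\tfrac12$ for $j\in\{i,i\pm1\}$ and $-\tfrac12$ for $j\in\{i\pm2\}$. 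The $e$-row and $e$-column both read $(\tfrac12,\tfrac12,\tfrac12,\tfrac12,\tfrac12)$ against the $e_i$, and the $6\times6$ block among the $e_i$ has $(i,j)$-entry depending only on $i-j\bmod 5$ in the symmetric pattern above, hence is symmetric; so the whole matrix is symmetric and $X$ is $s$-self-adjoint. Finally, for $\Af_5$-equivariance I would check $X$ commutes with the three generators $\sigma_5,\sigma_2,\sigma_3$ using the explicit permutation-with-signs formulas for their action on $\{e,e_0,\dots,e_4\}$ recorded just before the lemma. The commutation with $\sigma_5$ is immediate since $X(e_i)=\varepsilon-(e_{i+2}+e_{i-2})$ is manifestly $\sigma_5$-covariant and $X(e)=\varepsilon$ with $\sigma_5$ fixing both $e$ and $\varepsilon$ (as $\varepsilon$ is a symmetric function of the orbit). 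The cases of $\sigma_2$ and $\sigma_3$ are the genuine computational core: one must track the sign flips $e_2\mapsto-e_2$, $e_3\mapsto-e_3$ under $\sigma_2$ and the partial sign flip under $\sigma_3$, verify $\sigma_2\varepsilon=\varepsilon$ and $\sigma_3\varepsilon=\varepsilon$ (each generator fixes $e+e_0+\cdots$ or an equivalent symmetric vector, forcing it to fix $\varepsilon$), and then match $X\sigma_\bullet(e_i)$ with $\sigma_\bullet X(e_i)$ index by index. I expect this last equivariance check for $\sigma_2$ and $\sigma_3$ to be the main obstacle — not conceptually, but as the one place where the signs in the icosahedral matrix representation must be handled without error; everything else reduces to the single algebraic identity $2\varepsilon=e+\sum_i e_i$ and the quadratic relation.
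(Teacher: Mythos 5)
Most of your computations are sound (the derivation of $X(e)=\varepsilon$, the verification of $X^2=X+1$ on the basis $\{\varepsilon,e_0,\dots,e_4\}$, the symmetry of the matrix of $X$ in the orthonormal basis, the action on $E^\vee/E$, and the $\sigma_5$-equivariance), and your overall strategy — direct verification in the explicit basis — is the same as the paper's, which in fact only writes out the equivariance and declares the rest straightforward. But there is a genuine error at exactly the step you single out as the computational core: the claim that $\sigma_2(\varepsilon)=\varepsilon$ and $\sigma_3(\varepsilon)=\varepsilon$ is false. The vector $\varepsilon=\tfrac12(e+\sum_i e_i)$ is a sum of chosen representatives of the antipodal pairs in the $\Af_5$-orbit of $e$, and $\Af_5$ permutes the pairs but flips signs: since $\sigma_2$ sends $e_2\mapsto -e_2$ and $e_3\mapsto -e_3$ while swapping $e\leftrightarrow e_0$ and $e_1\leftrightarrow e_4$, one gets $\sigma_2(\varepsilon)=\varepsilon-e_2-e_3$, and likewise $\sigma_3(\varepsilon)=\varepsilon-e_2-e_3$. (Indeed, $\varepsilon$ cannot be fixed by all of $\Af_5$, because $E_\Q$ is an irreducible nontrivial $\Q\Af_5$-module and so has no nonzero invariant vectors; your parenthetical justification conflates ``$\sigma_2$ fixes $e+e_0$'' with fixing the full symmetric sum.)

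Carried out with the false value, your index-by-index matching would fail: for instance $X\sigma_2(e_2)=X(-e_2)=-\varepsilon+e_0+e_4$, whereas under the assumption $\sigma_2(\varepsilon)=\varepsilon$ you would compute $\sigma_2X(e_2)=\sigma_2(\varepsilon-e_4-e_0)=\varepsilon-e_1-e$, and these differ (their equality would force $2\varepsilon=e+e_0+e_1+e_4$). With the correct value $\sigma_2(\varepsilon)=\varepsilon-e_2-e_3$ the identity does hold: $\sigma_2(\varepsilon-e_4-e_0)=\varepsilon-e_2-e_3-e_1-e=-\varepsilon+e_0+e_4$. So the lemma is fine but your verification as written breaks. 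The clean fix is either to record $\sigma_2(\varepsilon)=\varepsilon-e_2-e_3$ first, or to do what the paper does: check that $\sigma_2$ acts on the six vectors $X(e)=\varepsilon,\ X(e_0),\dots,X(e_4)$ by the same permutation-with-signs ($X(e)\leftrightarrow X(e_0)$, $X(e_1)\leftrightarrow X(e_4)$, $X(e_2)\mapsto -X(e_2)$, $X(e_3)\mapsto -X(e_3)$) as it does on $e,e_0,\dots,e_4$; note also that $\sigma_5$ and $\sigma_2$ already generate $\Af_5$, so the $\sigma_3$ check is redundant.
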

\begin{proof}
We  only verify the last assertion, as checking the others is straightforward.
Since $\sigma_5$ and $\sigma_2$ generate $\Af_5$, it suffices to  check that these elements commute with $X$. This is obvious for $\sigma_5$.
In the case of $\sigma_2$, we must verify that 
\[
\sigma_2:   X(e)\leftrightarrow X(e_0)\,  ; \, X(e_1)\leftrightarrow X(e_4)\,  ; \, X(e_2)\leftrightarrow -X(e_2)\,  ; \,X(e_3)\leftrightarrow -X(e_3).
\]
This is also straightforward. 
\end{proof}

Thus $E$ becomes a module over the ring $\Oc:=\Z[X]/(X^2-X-1)$. Notice that the map $X\mapsto\half +\half\sqrt{5}$ identifies  $K= \Q[X]/(X^2-X-1)$ with the  number field $\Q(\sqrt{5})$ and $\Oc$ with its ring of integers.  Any unit of $\Oc$ is an integral power of $X$ up to sign.
It is clear that $E$ is a torsion free $\Oc$-module of rank $3$ and $E_\Q$ a
$K$-vector space of dimension $3$. Since $K$ has class number $1$, $E$ is in fact a free $\Oc$-module.
For the same reason this is true for  $E^\vee$. Since $X$ acts transitively on the  nonzero elements of $E^\vee/E$, there are no intermediate  $\Oc$-submodules $E\subsetneq L\subsetneq  E^\vee$. We note that the  $K$-stabilizer of $E_o$ in $E_\Q$ is the subring
\[
\Oc_o:=\Z+ \Z.2X\cong \Z +\Z\sqrt{5}
\] 
of $\Oc$ of index $2$. The group of units of $\Oc_o$ is generated by $-1$ and $X^3=2X+1$;  it contains the subgroup of totally positive units of $\Oc_o$ as a subgroup of index $2$, and the latter is generated by $X^6=8X+5$. 

\begin{remark}[{\bf The icosahedral realizations}]\label{rem:isocreal}
When we regard $E_\Q$ as a $K\Af_5$-module of degree $3$, it is absolutely irreducible. For example,  
we have two field embeddings $\sigma, \sigma' : K\hookrightarrow \Rb$ characterized by
$\sigma (X)=\tfrac{1}{2}(1+\sqrt{5})$ resp.\ $\sigma' (X)=\tfrac{1}{2}(1-\sqrt{5})$ which are exchanged by the nontrivial Galois involution of $K$ and 
the associated $\Rb\Af_5$-modules  $\Rb\otimes_{K,\sigma} E_\Q$ and $\Rb\otimes_{K,\sigma'} E_\Q$ are irreducible. Since the  $K$-action on $E_\Q$  is self-adjoint with respect to $s$,  the inner product extends to a symmetric $K$-bilinear form 
$s_K: E_\Q\times E_\Q\to K$ and the two field embeddings define  $\Af_5$-invariant inner products on $I_\Rb$ and $I'_\Rb$  preserved 
by the $\Af_5$-action. The convex hull of the $\Af_5$-orbit
of the image of $e$ in each of these  is a regular icosahedron relative to this inner product, thus making explicit the realization of $\Af_5$ as  the icosahedral group.
\end{remark}

\begin{lemma}
The commutant of the $\Z\Af_5$-module $E$ resp.\ $E_o$ is $\Oc$ resp.\ $\Oc_o$. 
Conjugation with an  element of $\Sf_5\ssm \Af_5$ induces in these rings the Galois involution (which sends $X$ to $1-X$).
\end{lemma}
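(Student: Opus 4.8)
The plan is to recognise each commutant as the ring of multipliers of a lattice inside $K=\Q(\sqrt5)$ and then to feed in the structural facts already established. First I would note that, since $E_\Q$ is an irreducible $\Q\Af_5$-module with $\End_{\Q\Af_5}(E_\Q)=K$, every $\Z\Af_5$-endomorphism of $E$ (resp.\ $E_o$) extends uniquely to a $\Q\Af_5$-endomorphism of $E_\Q=\Q\otimes E$, hence to multiplication by a unique element of $K$; conversely a nonzero integer multiple of any $\lambda\in K$ carries the lattice into itself. Thus the commutant of $E$ is the ring of multipliers
\[
\Oc(E):=\{\lambda\in K:\lambda E\subseteq E\},
\]
and similarly the commutant of $E_o$ is $\Oc(E_o)$; both are orders in $K$, since each embeds into the finitely generated group $E$ via $\lambda\mapsto\lambda v$ for a fixed $0\ne v$.

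Next I would treat $E$ and $E_o$ in turn. For $E$: the lemma preceding the statement gives $X\in\End_{\Z\Af_5}(E^\vee)$ with $XE\subseteq E$, so $\Oc=\Z[X]\subseteq\Oc(E)$; since $\Oc$ is the full ring of integers of $K$, it is the maximal order, and any order containing it equals it, whence $\Oc(E)=\Oc$. For $E_o$: I would first check $\Oc_o=\Z+\Z\!\cdot\!2X\subseteq\Oc(E_o)$, which is immediate because $X$ preserves $E^\vee$ while $2E^\vee\subseteq E\subseteq E_o$ (as $E^\vee/E\cong(\Z/2)^2$), so $2X\!\cdot\! E_o\subseteq E_o$. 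Then $\Oc_o\subseteq\Oc(E_o)\subseteq\Oc$ with $[\Oc:\Oc_o]=2$, and $\Oc(E_o)\neq\Oc$: indeed $\Oc(E_o)=\Oc$ would make $E_o$ an intermediate $\Oc$-submodule $E\subsetneq E_o\subsetneq E^\vee$, contradicting the already-observed absence of such modules (a consequence of $X$ acting transitively on the three nonzero classes of $E^\vee/E$). Hence $\Oc(E_o)=\Oc_o$.

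For the last assertion I would argue as follows. Both $E$ and $E_o$ are $\Sf_5$-invariant, so for $\tau\in\Sf_5\ssm\Af_5$ the map $\phi\mapsto\tau\phi\tau^{-1}$ makes sense and, since $\tau$ normalizes $\Af_5$, is a ring automorphism of the commutant $\Oc$ (resp.\ of $\Oc_o$, compatibly); extending $\Q$-linearly it gives a $\Q$-algebra automorphism $\iota$ of $K$, so $\iota$ is either the identity or the Galois involution. The real point is to exclude the identity. If $\iota$ were trivial, then $\tau$ would commute with the $K$-action on $E_\Q$, and since $\Af_5$ already acts $K$-linearly, $E_\Q$ would be a $K$-linear representation of $\Sf_5=\langle\Af_5,\tau\rangle$ on a $3$-dimensional $K$-vector space; applying the embedding $\sigma\colon K\into\Rb$ we would get a $3$-dimensional real representation of $\Sf_5$ whose restriction to $\Af_5$ is the irreducible icosahedral representation $I_\Rb$ of Remark~\ref{rem:isocreal}, hence itself irreducible — impossible, since $\Sf_5\cong S_5$ has no irreducible (real or complex) representation of degree $3$. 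Therefore $\iota$ is the Galois involution, which sends $X=\half(1+\sqrt5)$ to $\half(1-\sqrt5)=1-X$. I expect this exclusion of the trivial automorphism to be the only genuine obstacle; everything else is assembly of facts proved above. Alternatively one could verify $\iota\neq\operatorname{id}$ by an explicit matrix computation with a transposition, using the description of $E_o$ inside $\wedge^2(\Z^5/\Z)$, but the representation-theoretic route seems cleaner.
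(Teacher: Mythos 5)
Your proof is correct, and the computation of the two commutants is essentially the paper's argument: Schur's lemma plus integrality of the endomorphisms bounds the commutant above by $\Oc$, while the action of $X$ (resp.\ $2X$) bounds it below. Your extra care with $E_o$ --- ruling out the possibility $\Oc(E_o)=\Oc$ via the absence of intermediate $\Oc$-submodules between $E$ and $E^\vee$ --- correctly fills in a step the paper dismisses with ``the proof \dots is similar.'' The one place you genuinely diverge is in proving that the involution induced by an odd permutation is nontrivial: the paper observes that $E_\C$ is irreducible as a $\C\Sf_5$-module of degree $6$, so by Schur the $\Z\Sf_5$-commutant, which is exactly the fixed ring of the involution, equals $\Z\neq\Oc$; you instead assume the involution is trivial and derive from this a $3$-dimensional irreducible representation of $\Sf_5$, which does not exist. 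Both routes rest on the same representation-theoretic input (the character table of $\Sf_5$ versus that of $\Af_5$) and are equally valid; the paper's version is marginally more economical in that it identifies the fixed ring directly rather than arguing by contradiction.
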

\begin{proof}
Since $E_K$ is absolutely irreducible as $\Af_5$-representation, $\End_{K\Af_5}(E_K)=K$  by Schur's lemma. So $\End_{\Z\Af_5}(E)$ is a subring of $K$. The integrality implies that this subring must be contained in $\Oc$. On the other hand, the previous lemma shows that it contains  $\Oc$ so that we have equality. The proof that $\End_{\Z\Af_5}(E_o)=\Oc_o$ is similar.

We also know that $E_\C$ is irreducible as an $\C\Sf_5$-module, and so Schur's lemma implies that the commutant of the $\Z\Sf_5$-module $E$ is just 
$\Z$. Hence conjugation with an  element of $\Sf_5\ssm \Af_5$ induces a nontrivial involution of the ring $\Oc$ with fixed point ring $\Z$. 
There is only such involution, namely   the Galois involution of $\Oc$.
\end{proof}

It is clear that  $E^\vee$ is also $\Oc$-invariant. The definition of $X$ shows that the action of $\Oc$ on $E^\vee/E$  factors through a faithful action  of $\Oc/2\Oc$. 
But $2\Oc$ is a prime ideal of $\Oc$ so that the finite ring $\Oc/2\Oc$ is a field with $4$ elements (hence denoted $\Fb_4$). It has  the order 2 subring $\Oc_o/2\Oc$ 
as its prime field $\Fb_2\subset \Fb_4$.  Thus $E^\vee/E$ acquires  the structure of a $1$-dimensional vector space over $\Oc/2\Oc=\Fb_4$. 
The subgroup $E_o/E\subset E^\vee/E$ is a module over $\Oc_o/2\Oc=\Fb_2$,  and so  defines an $\Fb_2$-form of the $\Fb_4$-line $E^\vee/E$.

\begin{remark}\label{rem:}
One may check that the $\Sf_5$-orbit of $e$ is the union of two $\Af_5$-orbits, namely  of $e$ and of $(2X+1)e$. Since the 
latter is a  vector of $s$-length $3$, this makes it evident that $s$ is \emph{not} preserved by $\Sf_5$. Nevertheless, since  $2X$ takes $E^\vee$ to $E$, $\Sf_5$ will preserve each coset of $E$ in $E^\vee$ (in other words, will act  as the identity in $E^\vee/E$).
\end{remark}

\subsection{The functors $V$ and $V_o$}
 Let $H$ be a  finitely generated $\Z\Af_5$-module. Then  
the isogeny  module 
\[
V_o(H):=\Hom_{\Z\Af_5} (E_o, H)\; \text{ resp.\;  } V(H):=\Hom_{\Z\Af_5} (E, H)
\]
is in a natural manner an $\Oc_o$-module resp.\  $\Oc$-module (acting by precomposition). So $V_o$ resp.\ $V$ is a  functor from the category of  finitely generated $\Z\Af_5$-modules to the category of  finitely generated $\Oc_o$-modules resp.\  $\Oc$-modules. 
Restriction defines a natural transformation  $V_o\to V$. The evaluation map $V_o(H)\times E_o\to  H$ factors through a homomorphism $V_o(H)\otimes_{\Oc_o} E\to  H$
of $\Z\Af_5$-modules. %In case $H_\Q$ is isogeneous of type $E_\Q$, this homomorphism is injective and has finite  cokernel.
There will be two cases of special interest to us. 
\\

First assume  that $H$ is free as a $\Z$-module. Then both isogeny modules are torsion free, but in the case of $V_o(H)$, it need not be free. In fact, $V_o$ applied to the chain $E\subset E_o\subset E^\vee$  yields the chain of $\Oc_o$-modules  ${\Oc}\xrightarrow{\times 2} \Oc_o\subset \Oc$, and $\Oc$ is not free as a $\Oc_o$-module. On the other hand, $V(H)$ is a free $\Oc$-module, as $\Oc$ has class number $1$. (Indeed, if we apply $V$ to the above chain we find that $E\subset E_o$ induces an isomorphism $V(E)=V(E_o)=\Oc$ and that $E_o\subset E^\vee$ induces 
$V(E_o)=\Oc\xrightarrow{\times 2}\Oc= V(E^\vee)$.)
%If we twist the $\Z \Af_5$-module structure on $H$ by an  automorphism of $\Af_5$, then this has no effect on the $\Oc$-module $V(H)$ or  twists it by the Galois-involution of $\Oc$ according to whether the automorphism is inner or not. The situation is likewise for $V_o(H)$.

Suppose now $H$ is also endowed  with a $\Af_5$-invariant symplectic form $(x,y)\in H \times H \mapsto x\cdot y\in  \Z$. Then for every pair $v_1, v_2\in V_o(C)$, the form
\[
(x,y)\in E_o\times E_o\mapsto   v_1 (x)\cdot v_2(y)\in \Z
\]
is also  $\Af_5$-invariant. This means that there exists a unique $\Af_5$-equivariant endomorphism  $A(v_1, v_2)$ of $E_o$ such that 
\[
v_1 (x)\cdot v_2(y)= s(A(v_1, v_2)(x), y)
\] 
for all $x,y\in E$. But any such endomorphism is in $\Oc_o$. Using the fact that $X$ is self-adjoint with respect to $s$,  one checks that the resulting map $A: V_o(C)\times V_o(C)\to \Oc_o$ is symplectic: it is $\Oc_o$-bilinear, antisymmetric and becomes nondegenerate over $K$. 
\\

The other case is of interest is when $H$ is the $\Fb_5\Af_5$-module $N_5$ defined in Subsection \ref{subsect:torsionpic}. Recall that we  constructed in Lemma \ref{lemma:psi} a surjection $\psi: E_o\to N_5$. So this is a nontrivial element of $V_o(N_5)$.
As $N_5$ is an $\Fb_5$-vector space, so will be $V_o(N_5)$. At the same time it is an $\Oc_o$-module. Indeed, the prime $5$ ramifies in $\Oc_o$, for  $\Oc_o$ is additively  generated by $1$ and $2X-1$ and  $2X-1$  
is a square root of $5$. So  $2X-1$ generates a prime ideal $\Oc_o$ with  residue field $\Fb_5$ and  the $\Oc_o$-module structure on $V_o(N_5)$ factors through this residue field. 
%It is well-known that  the reduction  homomorphism  $\SL_2(\Z)\to \SL_2(\Fb_5)$ is onto. Hence so is  $\SL_2(\Oc_o)\to \SL_2(\Fb_5)$.

\begin{lemma}\label{lemma:toN5}
The $\Oc_o$-module $V_o(N_5)$ is a vector space over $\Fb_5$ of dimension one, generated by $\psi$.
\end{lemma}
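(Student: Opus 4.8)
The plan is to prove that $\dim_{\Fb_5}V_o(N_5)=1$. Since by Lemma~\ref{lemma:psi} the map $\psi$ is a \emph{surjection} $E_o\twoheadrightarrow N_5$, it is in particular a nonzero element of $V_o(N_5)=\Hom_{\Z\Af_5}(E_o,N_5)$ (a homomorphism which, as $N_5$ is an $\Fb_5$-vector space, automatically kills $5E_o$), so the inequality $\dim_{\Fb_5}V_o(N_5)\ge 1$ is free and the content lies entirely in the bound $\dim_{\Fb_5}V_o(N_5)\le 1$. Once this is shown, $\psi$ must generate, and the stated factoring of the $\Oc_o$-action through $\Fb_5$ is exactly the observation already made in the text that a one-dimensional $\Fb_5$-space carries essentially one $\Oc_o$-structure.

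For the upper bound I would work with the explicit basis of \S\ref{section:algebra}. By Lemmas~\ref{lemma:generator} and~\ref{lemma:generators} the six vectors $e,e_0,e_1,e_2,e_3,e_4$ form a $\Z$-basis of $E_o$, the generator $\sigma_5$ fixes $e$ and cyclically permutes $e_0\mapsto e_1\mapsto\cdots\mapsto e_4\mapsto e_0$, and $\sigma_2(e)=e_0$; moreover $\sigma_2,\sigma_5$ generate $\Af_5$. Let $\theta\colon E_o\to N_5$ be any $\Z\Af_5$-morphism. From $\sigma_5e=e$ we get $\theta(e)\in N_5^{\sigma_5}$; from $\sigma_5^ie_0=e_i$ we get $\theta(e_i)=\sigma_5^i\theta(e_0)$; and from $\sigma_2e=e_0$ we get $\theta(e_0)=\sigma_2\theta(e)$. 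Hence $\theta(e_i)=\sigma_5^i\sigma_2\theta(e)$ for all $i$, so $\theta$ is completely determined by the single vector $\theta(e)$, since $\{e,e_0,\dots,e_4\}$ is a basis. Thus $\theta\mapsto\theta(e)$ is a well-defined injective $\Fb_5$-linear map $V_o(N_5)\hookrightarrow N_5^{\sigma_5}$ (injectivity: if $\theta(e)=0$ then $\theta$ kills the whole basis), and it remains only to check that $\dim_{\Fb_5}N_5^{\sigma_5}=1$.

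To compute this fixed space I would realize $N_5$ inside the permutation module. Let $P=\bigoplus_{i\in\Z/5}\Fb_5 f_i$ with $\Sf_5$ permuting the $f_i$, let $U\subset P$ be the codimension-one submodule of vectors of zero coordinate sum, and note $\mathbf 1:=f_0+\cdots+f_4\in U$ since $5\equiv 0$. Comparing with the description of $N_5$ recalled just before Lemma~\ref{lemma:psi} gives an isomorphism of $\Fb_5\Sf_5$-modules $N_5\cong U/\langle\mathbf 1\rangle$. Now $\sigma_5$ is a $5$-cycle, so it acts on $P$ with characteristic polynomial $(t-1)^5$ and one-dimensional fixed space $\langle\mathbf 1\rangle$; hence $\sigma_5$ is a single Jordan block on $P$, $U=(\sigma_5-1)P$ is its unique $4$-dimensional submodule, and $\langle\mathbf 1\rangle=(\sigma_5-1)^4P$ is the socle of $U$. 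Therefore $N_5\cong U/\langle\mathbf 1\rangle$ is again a single Jordan block, now of size $3$, for $\sigma_5$; and a single Jordan block for an element of order $p$ in characteristic $p$ has one-dimensional fixed space. So $\dim_{\Fb_5}N_5^{\sigma_5}=1$, and $\dim_{\Fb_5}V_o(N_5)=1$ follows.

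The main obstacle, such as it is, is bookkeeping rather than substance: one must be sure that $e,e_0,\dots,e_4$ really are a free $\Z$-basis (so that a $\Z$-linear map out of $E_o$ is unconstrained on them and the three equivariance relations genuinely pin it down) and that the concrete description of $N_5$ in the text yields precisely the $\Sf_5$-module $U/\langle\mathbf 1\rangle$. Both are contained in Lemma~\ref{lemma:generator}/Lemma~\ref{lemma:generators} and in the construction preceding Lemma~\ref{lemma:psi}; granting them, the remainder is a one-line dimension count using the explicit matrices recorded just before Lemma~\ref{lemma:generators}.
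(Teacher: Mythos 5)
Your proof is correct, but it takes a genuinely different route from the paper's. The paper works with the $\Af_5$-module generator $f_{01}+f_{12}+f_{20}=\delta(f_0\wedge f_1\wedge f_2)$ of $E_o$, whose stabilizer contains the $3$-cycle $(012)$; it averages $\psi'(f_{01}+f_{12}+f_{20})$ over $\langle(012)\rangle$ inside the explicit coordinate model of $N_5$ and subtracts the relation $\sum_j f_j-5f_4$ to force the value to be a multiple of $f_3-f_4$, which (since $3$ is invertible mod $5$) pins $\psi'$ down up to a scalar. You instead use the icosahedral basis $e,e_0,\dots,e_4$: equivariance under $\sigma_2$ and $\sigma_5$ shows any $\theta$ is determined by $\theta(e)\in N_5^{\sigma_5}$, and you get $\dim_{\Fb_5}N_5^{\sigma_5}=1$ from the fact that a $5$-cycle in characteristic $5$ is a single Jordan block, so that the subquotient $U/\langle\mathbf 1\rangle$ is again one. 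Both arguments rest on the same mechanism --- a generator of $E_o$ with cyclic stabilizer, so that an equivariant map is determined by one vector in a fixed space --- but yours replaces the paper's short explicit calculation with a more structural fixed-point count, at the cost of having to verify the identification $N_5\cong U/\langle\mathbf 1\rangle$ (the sublattice generated by the elements $-5f_i+\sum_j f_j$ equals $\Z\bigl(-5f_4+\sum_j f_j\bigr)+5U$, so the quotient is indeed $(U\otimes\Fb_5)/\langle\mathbf 1\rangle$; this is routine but should be written down). Two small points you handle correctly and should keep explicit: only injectivity of $\theta\mapsto\theta(e)$ is needed, not surjectivity onto $N_5^{\sigma_5}$; and the fact that the $\Oc_o$-structure factors through the residue field $\Fb_5$ at the ramified prime $(2X-1)$ is, as you say, already settled in the discussion preceding the lemma (and in any case follows once the space is one-dimensional, since a nilpotent endomorphism of a line is zero).
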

\begin{proof}
Let $\psi'\in V_o(N_5)$. Since $f_{01}+f_{12}+f_{20}$ generates $E_o$ as a $\Af_5$-module, it suffices to prove that $\psi'(f_{01}+f_{12}+f_{20})$ is  
unique up to a scalar in $\Fb_5$. Let  us represent $\psi'(f_{01}+f_{12}+f_{20})$  by $\sum_i a_if_i$ with $a_i\in\Z$ such that $\sum_i a_i=0$. 
If we sum over the orbit of $(012)$, we find that
 $3\psi'(f_{01}+f_{12}+f_{20})$ is represented by an element of the form $a (f_0+f_1+f_2)+bf_3+cf_4$ with $3a+b+c=0$. This element is also represented by 
 \[a (f_0+f_1+f_2)+bf_3+cf_4 -a(f_0+f_1+f_2+f_3-4f_4)=(b-a)f_3+(c+4a)f_4=(b-a)(f_3-f_4).\]  This proves that $\psi'(f_{01}+f_{12}+f_{20})$ is unique up to scalar.
\end{proof}

\section{Structures preserved by the monodromy}
\label{sect:monodromy1}

The Wiman-Edge pencil ${\Bc^\circ}$ is a family of genus $6$ smooth algebraic curves. Since the action of 
$\pi_1({\Bc^\circ})$ on the integral first homology of the fiber preserving algebraic intersection number, we obtain a {\em monodromy representation} $\pi_1({\Bc^\circ})\to \Sp_{12}(\Zb)$.  The monodromy action preserves a lot more structure, for example it intertwines the $\Af_5$ automorphism group of each fiber.   Our goal in this section is to find and describe other structure preserved by the monodromy, thus giving strong restrictions on its image.  

\subsection{Torsion in the Picard group of the Wiman-Edge pencil}
\label{subsect:torsionpic}
Recall that the Wiman-Edge pencil (when regarded as lying  on the del Pezzo surface $S$) has as its base  locus
the  irregular $\Sf_5$-orbit  $\Sigma$ in $S$ of size 20. 
We follow \cite{DFL} and denote  by $H_0^2(S)\subset \pic (C)$ the orthogonal complement of the anticanonical class. 
This is a negative definite lattice spanned by its elements of self-intersection $-2$. Each such $(-2)$-vector can be represented by  the 
difference  of two disjoint lines and together they make up a root system of type $A_4$.  

For the discussion in this subsection, we shall regard $S$ as obtained from $\Pb^2$ by blowing up in $4$ points in general position. Then a basis of  $H_0^2(S)$ is 
$(\ell, \varepsilon_0,\varepsilon_1, \varepsilon_2, \varepsilon_3)$, where 
the $\varepsilon_i$'s  are the classes of the exceptional curves and $\ell$ is the image of a class of a line in $\Pb^2$ (see \S 3 of \cite{DFL}, where the notation slightly differs from the one used there). The anti-canonical class of $S$ is  $-3\ell+\varepsilon_0+\varepsilon_1+\varepsilon_2 +\varepsilon_3$,  a root basis of its orthogonal complement $H^2_0(S)$ is  $(\alpha_1, \alpha_2, \alpha_3, \alpha_4)=(\varepsilon_0-\varepsilon_1, \varepsilon_1-\varepsilon_2, \varepsilon_2-\varepsilon_3, \ell-\varepsilon_1-\varepsilon_2-\varepsilon_3)$ and the   $10$ line classes of $S$ are $\{\varepsilon_i\}_{i=0}^3$ and $\{\ell-\varepsilon_i-\varepsilon_j\}_{0\le i<j\le 3}$. The $\Sf_5$-action is realized as the Weyl group of this root system and we choose an identification which  
makes $\Sf_4$ correspond with the stabilizer of $\ell$, i.e., the full symmetric group of $\{\varepsilon_i\}_{i=0}^3$.

\subsubsection*{A finite group associated with the root system in $H_2(S)$} The intersection pairing identifies the dual lattice $H^2_0(S)^\vee=\Hom (H^2_0(S), \Z)$ with a sublattice of $H^2_0(S; \Q)$ of vectors that have integral intersection product with vectors in $H^2_0(S)$. 
This is  the weight lattice of the above root system; it contains  $H^2_0(S)$.
It is known that  $H^2_0(S)^\vee/ H^2_0(S)$ is cyclic of order $5$,  with  a generator representable by  the fundamental weight $\varpi_4\in H^2_0(S)^\vee$ 
defined by  $\varpi_4\cdot\alpha_i=-\delta_{i4}$ (this is also the orthogonal projection of $-\ell$ in $H^2_0(S); \Q)$). The orbit of $\varpi_4$ under the Weyl group generates $H^2_0(S)^\vee$. Since $5\varpi_4=\alpha_1+2\alpha_2+3\alpha_3+4\alpha_4$ is indivisible in $H^2_0(S)$, 
\[
N_5:= H^2_0(S)/5H^2_0(S)^\vee
\]
is an $\Fb_5$-vector space of dimension $4-1=3$. Essentially by construction, the intersection pairing induces a nonsingular quadratic form 
$N_5\times N_5\to \Fb_5$. Note that $N_5$ is an  $\Fb_5\Sf_5$-module on which  $\Sf_5$ acts orthogonally (it is an incarnation of the standard representation
$\SL_2(\Fb_5)$ of degree $3$, which indeed leaves invariant a quadratic form). 
In terms of the usual description of the root system $A_4$ (see for example \cite{bourbaki:lie}), $H^2_0(S)$ is identified with the corank one sublattice of $\Z^5$ consisting of vectors with coordinate sum zero and $5H^2_0(S)^\vee$ with the
sublattice generated by the vectors with four  coordinates equal to $-1$ and the remaining coordinate  equal to $4$.
The $\Sf_5$-action is the obvious one  and from this we easily see that  every  $\Sf_5$-invariant element of $N_5$  and every  $\Sf_5$-invariant $\Fb_5$-valued linear form on $N_5$ is zero. Since $\Fb_5\Af_5$ has only the trivial representation in dimension one, this implies that   $N_5$ is irreducible as an  $\Fb_5\Af_5$-module. 

\subsubsection*{A map to the Jacobian}
Let $C$ be a member of the Wiman-Edge pencil. Every point of $\Sigma$ lies in the smooth part of $C$ so that it defines a Cartier divisor of degree one on  $C$. We thus obtain a homomorphism
$H_0(\Sigma)\to \pic (C)$. This map restricts to a homomorphism $\tilde H_0(\Sigma)\to\pic_0(C)$, where the source is reduced homology and the target is the degree zero part of $\pic (C)$ (which is also the Jacobian of $C$, when $C$ is smooth).
A line in $S$ meets $C$ in $\Sigma$ in an opposite pair in $\Sigma$ whose sum
represents the image of this line under the restriction map $H^2(S)=\pic (S)\to \pic (C)$. 
So if $\tilde H_0(\Sigma)^+\subset\tilde H_0(\Sigma)$ stands for the sublattice for which opposite pairs have the same coefficient, then 
the homomorphism $\tilde H_0(\Sigma)^+ \to\pic_0(C)$ factors through a surjection $\tilde H_0(\Sigma)^+ \to H^2_0(S)$. 
Observe that all these maps are $\Af_5$-equivariant.

\begin{proposition}\label{prop:5torsion}
When $C$ is smooth, the  kernel of the restriction map $H^2_0(S)\to\pic_0(C)$  factors through an $\Af_5$-equivariant embedding of
$N_5$ in the $5$-torsion of $\pic_0(C)$.
\end{proposition}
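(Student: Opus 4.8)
The plan is to show that the restriction map $H^2_0(S)\to\pic_0(C)$ kills exactly the sublattice $5H^2_0(S)^\vee$, and to identify the resulting quotient with the image of the $5$-torsion map. First I would recall the factorization constructed just above the statement: the divisor map $\tilde H_0(\Sigma)^+\to\pic_0(C)$ factors through a surjection $\tilde H_0(\Sigma)^+\to H^2_0(S)$, and this is $\Af_5$-equivariant. So it suffices to analyze the kernel $\Ker\big(H^2_0(S)\to\pic_0(C)\big)$. Since every point of $\Sigma$ lies in the smooth locus of $C$ and $C$ has genus $6$, the target $\pic_0(C)$ is a $6$-dimensional abelian variety (an actual Jacobian for $C$ smooth), so the map from the rank-$4$ lattice $H^2_0(S)$ cannot have positive-dimensional image modulo torsion of the wrong shape; the key point is to pin down which torsion classes in $\pic(C)$ are hit.

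The main input is the geometry of lines on the del Pezzo surface $S$ and the anticanonical embedding. A line $L\subset S$ meets $C\in\mathcal B$ (which lies in the anticanonical class $-K_S$) in $L\cdot(-K_S)=1$ point — wait, rather, as stated in the excerpt, a line meets $C$ in an antipodal pair of $\Sigma$ whose sum represents the restriction of the class of $L$. The divisor $-K_S|_C$ is the canonical class $K_C$ (adjunction: $K_C=(K_S+C)|_C=0+C|_C\sim -K_S|_C$ since $C\in|-K_S|$; more precisely $K_C \sim (K_S + C)|_C = (-K_S)|_C$ because $C \in |-K_S|$). Now $5\varpi_4 = \alpha_1 + 2\alpha_2 + 3\alpha_3 + 4\alpha_4 \in H^2_0(S)$, and one computes that $5\varpi_4$ is, up to sign and up to a multiple of $-K_S$ (which restricts to a canonical, hence degree-$0$-shifted, class), a combination of line classes that restricts to a principal divisor on $C$ — concretely, one exhibits $5\varpi_4$ (or rather a suitable $\Af_5$-translate generating the cyclic group $H^2_0(S)^\vee/H^2_0(S)$ of order $5$) as $5$ times a weight, and shows its restriction to $C$ is trivial because it comes from $5$ times a genuine line class difference which, after multiplying by $5$, becomes a rational equivalence forced by $h^0$ and the structure of $\pic(S)\to\pic(C)$. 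So $5H^2_0(S)^\vee$ lies in the kernel, giving a well-defined $\Af_5$-equivariant map $N_5 = H^2_0(S)/5H^2_0(S)^\vee \to \pic_0(C)$, landing in $5$-torsion since $5\cdot H^2_0(S)^\vee$-classes, hence $5N_5 = 0$... more precisely $5$ times any class in $H^2_0(S)$ lies in $5H^2_0(S)$, and $H^2_0(S) \subset H^2_0(S)^\vee$, so the image is killed by... one needs $5$ times the map on $H^2_0(S)$ to vanish, which holds once $5H^2_0(S)^\vee \subset \Ker$ only gives $5 \cdot (\text{image of } H^2_0(S)^\vee) = 0$; since the image of $H^2_0(S)$ sits inside the image of $H^2_0(S)^\vee$, we get $5$-torsion.

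For injectivity of $N_5\hookrightarrow\pic_0(C)[5]$: since $N_5$ is irreducible as an $\Fb_5\Af_5$-module (established in the excerpt), the kernel of $N_5\to\pic_0(C)$ is an $\Af_5$-submodule, hence either $0$ or all of $N_5$; so it is enough to show the map is nonzero, i.e., that some line-class difference restricts to a nontrivial $5$-torsion point on $C$. This is where I expect the main obstacle: one must produce a single explicit nonvanishing, which requires genuinely knowing something about $C$ — e.g., using the singular member $C_\infty$ (the union of $10$ lines with Petersen intersection graph) or $C_c$ (five conics), where the Picard group is combinatorially computable, and then invoking specialization/flatness to deduce nonvanishing on the smooth generic member (a nonzero torsion section of the relative Jacobian that is nonzero on one fiber is nonzero on a neighborhood, and being torsion it is locally constant on the smooth locus, so nonzero everywhere smooth). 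Alternatively one argues directly that the composite $\tilde H_0(\Sigma)^+\to H^2_0(S)\to\pic_0(C)$ cannot have kernel larger than it does on the smooth del Pezzo, because $\Sigma$ imposes independent enough conditions; but the degeneration argument seems cleaner. Once nonvanishing is in hand, irreducibility of $N_5$ upgrades it to injectivity, completing the proof.
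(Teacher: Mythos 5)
Your overall skeleton is right and matches the paper's: factor the restriction through $N_5=H^2_0(S)/5H^2_0(S)^\vee$, observe the image is $5$-torsion because $5H^2_0(S)\subset 5H^2_0(S)^\vee$, and use irreducibility of $N_5$ as an $\Fb_5\Af_5$-module to reduce injectivity to nonvanishing. But both substantive steps are missing. First, you never actually prove that $5H^2_0(S)^\vee$ lies in the kernel; the sentence that its restriction ``becomes a rational equivalence forced by $h^0$ and the structure of $\pic(S)\to\pic(C)$'' is an assertion, not an argument, and there is no a priori reason a $(-2)$-class or a multiple of one should restrict trivially. The paper's mechanism is geometric: the quotient of $C$ by Klein's Vierergruppe $\Vf_4\subset\Af_4$ is rational (a Riemann--Hurwitz count using the Lefschetz numbers of the involutions), so all fibers of $C\to\Vf_4\backslash C$ are linearly equivalent on $C$; one then checks that $\Sigma$ is a union of five such fibers, that $\sum_i\varepsilon_i$ restricts to a sum of two fibers while $(\ell-\varepsilon_0-\varepsilon_1)+(\ell-\varepsilon_2-\varepsilon_3)$ restricts to a doubled fiber, whence $n=4\ell-3\sum_i\varepsilon_i=5\varpi_4$ dies in $\pic(C)$; finally the $\Af_5$-orbit of $n$ generates $5H^2_0(S)^\vee$, so the ($\Af_5$-invariant) kernel contains that whole sublattice. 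Nothing in your write-up substitutes for this. (Minor slip: $C\in|-2K_S|$, not $|-K_S|$, which is why a line meets $C$ in two points.)

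Second, for nonvanishing you only name a strategy (degenerate to $C_\infty$ or $C_c$, compute $\pic$ combinatorially, specialize) and explicitly flag it as ``the main obstacle'' without carrying it out; as it stands this is a placeholder, and it would require both the computation on a reducible nodal member and a careful specialization argument for torsion in $\pic_0$ across the degeneration. The paper avoids all of this with a short contradiction: if $N_5\to\pic_0(C)[5]$ were zero, every antipodal pair of $\Sigma$ would give the same degree-$2$ divisor class, producing a $g^1_2$ on $C$, i.e.\ a hyperelliptic involution; being intrinsic and central in $\Aut(C)$, it would have to centralize the $\Af_5$-action inside $\Aut(C)\subseteq\Sf_5$, which is impossible. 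So while your reduction via irreducibility is correct, the two points where actual geometry must enter are both unproved.
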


The proof rests on the fact that if we  restrict the $\Af_5$-action to Klein's Vierergruppe $\Vf_4\subset \Sf_4$ (the abelian subgroup of $\Af_4$ whose nontrivial elements are the three elements of order 2),  then the orbit space is still of genus zero:

\begin{lemma}
Let  $C\to \Vf_4\backslash C:=\overline C$ form the orbit space. Then $\overline C$ is rational and the degree $4$ cover $C\to \overline C$  has nine singular fibers in which we have simple ramification.
\end{lemma}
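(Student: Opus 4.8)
The plan is to reduce the whole statement to one representation-theoretic computation together with the Riemann--Hurwitz formula. Recall from the beginning of this section that for every smooth member $C$ one has $H_1(C;\Qb)\cong E_\Qb^2$ as $\Qb\Af_5$-modules, where $E_\Qb=\wedge^2(\Qb^5/\Qb)$. I would first restrict $E_\Qb$ to $\Vf_4$. Since $\Vf_4\subset\Af_4$ acts freely and transitively on four of the five coordinates of $\Qb^5$ and trivially on the fifth, the permutation module $\Qb^5$ restricts to the regular representation of $\Vf_4$ plus a trivial line, so the reflection module $\Qb^5/\Qb$ restricts to the regular representation $\mathbf 1\oplus\chi_1\oplus\chi_2\oplus\chi_3$ of $\Vf_4$. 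Taking $\wedge^2$ and using the relations $\chi_i\chi_j=\chi_k$ for $\{i,j,k\}=\{1,2,3\}$ gives $E_\Qb|_{\Vf_4}\cong 2\chi_1\oplus 2\chi_2\oplus 2\chi_3$, which contains \emph{no} trivial summand; hence $H_1(C;\Qb)^{\Vf_4}=\bigl(E_\Qb^{\Vf_4}\bigr)^2=0$. Since $\Vf_4$ acts on the compact Riemann surface $C$ by automorphisms, the orbit space $\overline C$ is again a smooth projective curve, and by the transfer argument (taking $\Vf_4$-invariants is exact over $\Qb$ and compatible with the Hodge decomposition) its genus equals $\tfrac12\dim_\Qb H_1(C;\Qb)^{\Vf_4}=0$. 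Thus $\overline C\cong\Pb^1$, which is the asserted rationality.

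Next I would analyse the ramification of $C\to\overline C$. The key elementary point is that the stabilizer in $\Vf_4$ of any $p\in C$ embeds in $\GL(T_pC)\cong\Cb^\times$ and is therefore cyclic; as $\Vf_4$ has no element of order $4$, every point-stabilizer has order $1$ or $2$. Hence $C\to\overline C$ has ramification index $\le 2$ at every point (``simple ramification''), and no point is fixed by all of $\Vf_4$, so the fixed-point sets of the three involutions $\tau_1,\tau_2,\tau_3$ of $\Vf_4$ are pairwise disjoint; put $f_i=\#\Fix(\tau_i)$. The ramification divisor of $C\to\overline C$ then has degree $f_1+f_2+f_3$, and Riemann--Hurwitz reads $10=2g(C)-2=4\bigl(2g(\overline C)-2\bigr)+(f_1+f_2+f_3)=-8+(f_1+f_2+f_3)$, so $f_1+f_2+f_3=18$. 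Finally, over any branch point $q\in\overline C$ the fibre is a single $\Vf_4$-orbit, which must have size $4/2=2$ and consist of two points of ramification index $2$; thus the $18$ ramification points of $C$ group into $18/2=9$ branch points, giving the nine singular (non-\'etale) fibres, each of the form $2p_1+2p_2$ with simple ramification at $p_1$ and $p_2$.

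I do not expect a serious obstacle: the only genuine input is the character computation showing that $E_\Qb|_{\Vf_4}$ has no invariants, after which everything is formal. The two points needing a little care are the transfer identity $g(\overline C)=\tfrac12\dim_\Qb H_1(C;\Qb)^{\Vf_4}$ and the cyclic-stabilizer fact, the latter being exactly what excludes a totally ramified fibre $4p$ and justifies the word ``simple''. A heavier, more geometric alternative would compute each $f_i=\#\Fix(\tau_i)$ from the $\Vf_4$-action on the del Pezzo surface $S$ together with the class $-2K_S$ of $C$, but it is not needed here.
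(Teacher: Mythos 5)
Your proof is correct, but it runs in the opposite logical direction from the paper's, so the comparison is worth recording. The paper counts the fixed points of each involution in $\Vf_4$ \emph{directly}, via the Lefschetz fixed point formula: since $H_1(C;\Cb)$ is of type $2I+2I'$ as an $\Af_5$-module, every order-$2$ element has trace $-4$, hence Lefschetz number $1-(-4)+1=6$, hence $6$ fixed points; the cyclic-stabilizer argument (which you also use) makes the three fixed-point sets disjoint, giving $18$ points of simple ramification and $9$ singular fibres, and only \emph{then} does the Euler-number identity $4e(\overline C)-18=e(C)=-10$ yield $e(\overline C)=2$, i.e.\ rationality. You instead establish rationality first, from the restriction computation $E_\Qb|_{\Vf_4}\cong 2\chi_1\oplus 2\chi_2\oplus 2\chi_3$ (no trivial summand, so $H_1(C;\Qb)^{\Vf_4}=0$), and then recover the total ramification degree $18$ from Riemann--Hurwitz. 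Both arguments extract character data from the same decomposition $H_1(C;\Qb)\cong E_\Qb^2$: yours uses the dimension of the $\Vf_4$-invariants, the paper's uses the trace of a single involution. The paper's route gives slightly finer information---each of the three involutions has exactly $6$ fixed points, rather than $18$ in total---but the lemma and its use in the proof of Proposition \ref{prop:5torsion} need only what you prove. Your individual steps check out: $\Vf_4$ acts simply transitively on the four letters it moves, so $\Qb^5/\Qb$ restricts to the regular representation and $\wedge^2$ of it is $2\chi_1\oplus2\chi_2\oplus2\chi_3$; point stabilizers are cyclic, hence of order at most $2$ and pairwise disjoint for the three involutions; and a fibre over a branch point is a single orbit $2p_1+2p_2$.
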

\begin{proof}
In order to apply Riemann-Hurwitz, we need to determine the ramification data.  Since $H_1(C; \C)$ is as a $\Af_5$-representation of type $2I+2I'$, the trace of every order 2  element of $\Sf_5$ on $H_1(C; \C)$ is $-4$ and so the Lefschetz number of such an element is $1-(-4)+1=6$. 
Hence it has as many fixed points.  The stabilizer of a point of $C$ is cyclic and so the fixed point sets of  distinct order 2  elements are disjoint. We therefore find that
$C\to \overline C$ has $6.3=18$ points of simple ramification. Since $C\to \overline C$ is a Galois cover of degree $4$, it follows that  we have $18/2=9$ singular fibres. 
The identity of Euler numbers $4e(\overline C)-18=e(C)=-10$ shows that  $e(\overline C)=2$. This proves that $\overline C$ is rational. 
\end{proof}

Since  $\pic_0$ of a rational curve is trivial, the fibers of $C\to \overline C$ all define the same class in $\pic(C)$. We 
will use the preceding lemma via this  implication.

\begin{proof}[Proof of Proposition \ref{prop:5torsion}]
Every element  of $\Sigma$ has an $\Af_5$-stabilizer of order $3$ and so is not a ramification  point of 
the projection  $C\to \overline C$. It follows that $\Sigma$ is the union of $60/(4\cdot 3)=5$ regular fibers of  $C\to \overline C$.
We want to understand how the antipodal involution of $\Sigma$ acts in these fibers. 

Since we agreed that the subgroup $\Sf_4$ of $\Sf_5$  is realized as the permutation group of $\{\varepsilon_i\}_i$, the Vierergruppe  $\Vf_4$ becomes a subgroup of this permutation group. We see that the $\Vf_4$-orbits in the set of line classes are the $4$-element set $\{\varepsilon_i\}_i$
and the three $2$-element sets of the form $\{\ell-\varepsilon_i-\varepsilon_j, \ell-\varepsilon_k-\varepsilon_l\}$, where $i,j, k, l$ are mutually distinct. 
The image of $\sum_i \varepsilon_i$  in $\pic(C)$ is the represented by the sum of  $4$ antipodal pairs and since $\Vf_4$ permutes the $\varepsilon_i$'s transitively,
it follows that  they are also the sum of two regular fibers of $C\to \overline C$. By the same reasoning,  
the image of the $\Vf_4$-invariant element $(\ell-\varepsilon_0-\varepsilon_1)+ (\ell-\varepsilon_2-\varepsilon_3)$ is represented $2$ antipodal pairs in a single fiber.
These fibers are linearly equivalent and so the kernel of $\pic (S)\to \pic (C)$ contains $n:=-\sum_i \varepsilon_i+2(2\ell-\varepsilon_0-\varepsilon_1-\varepsilon_2-\varepsilon_3)=4\ell-3\sum_i \varepsilon_i$.
This  is a sum  of roots:  $n=\sum_{0\le i<j<k\le 3} (\ell-\varepsilon_i-\varepsilon_j-\varepsilon_k)=\alpha_1+2\alpha_2+3\alpha_3+4\alpha_4$. It is in fact equal to $5\varpi_4$.
Since $n$ is fixed by a reflection, its $\Af_5$-orbit is the full $\Sf_5$-orbit. The lattice generated by this orbit is $5H^2_0(S)^\vee$ and as  
the kernel must be invariant under $\Af_5$,  it will contain this sublattice. This proves the factorization. 

So restriction defines a homomorphism from $N_5\to \pic (C)[5]$. This   map
is $\Af_5$-equivariant and hence its kernel is a  $\Af_5$-invariant subspace of $N_5$. Since $N_5$ is irreducible, this kernel  is either trivial or  all of 
$H^2_0(S)/5H^2_0(S)^\vee$. Suppose the latter. This then means that for every antipodal pair in $C$ its sum is a degree $2$ divisor whose class
is independent of that pair. The linear system of this class  then defines a pencil of degree $2$ on $C$ for which the antipodal pairs are fibers. A 
degree 2 pencil  on a curve of positive genus must define a hyperelliptic involution and hence is intrinsic to the curve. In our case, this hyperelliptic
involution must be normalized by the $A_5$-action. But the automorphism group of $C$ is contained in $\Sf_5$  (with equality for the  Wiman curve) and 
no element of  $\Sf_5$ normalizes $\Af_5$. As this yields a contradiction, this proves that $N_5\to \pic (C)[5]$ has trivial kernel.
\end{proof}

\begin{remark}\label{rem:cansurjection}
Since $\pic_0(C)[5]$ can be identified with $\Hom (H_1(C), \mu_5)$,  we have an embedding of $N_5$ in $\Hom (H_1(C), \mu_5)$. Dually, this  yields an surjection $H_1(C)\twoheadrightarrow\Hom (N_5, \mu_5)$. If we choose a primitive $5$th root of unity (which identifies $\mu_5$ with
$\Fb_5$) and use the quadratic form to identify $N_5$ with its dual, then we obtain an isomorphism  $\Hom (N_5, \mu_5)\cong N_5$ and there results
a surjection $H_1(C)\twoheadrightarrow N_5$. 
Proposition \ref{prop:5torsion} has therefore a topological consequence: the surjection $H_1(C)\twoheadrightarrow N_5$ is locally constant  as $C$ varies in the smooth fibers of the Wiman-Edge pencil, and so this imposes restriction (a `level structure') on the monodromy 
of this family. We shall see this illustrated when we compute the monodromy group in \S \ref{sect:monodromy1} and \S\ref{sect:monodromy2}.
\end{remark}

\begin{remark}
In the above argument we divided out by the Klein Vierergruppe. If we divide out by the bigger group $\Af_4$ instead, then the orbit space  \emph{a fortiori}
of genus zero. This intermediate orbit space defines a (non-Galois) cover of the $\Af_5$-orbit space $\tilde P\to P$ of degree $5$.  Such a covering can be given  by a rational function $f$ on the smooth rational curve  $\tilde P$ of degree $5$. The monodromy group of $f$ is $\Af_5$ so that the Galois closure of the associated degree $5$ extension of rational function fields $C(\tilde P)/\C (f)$ defines an $\Af_5$-covering. This covering will be a copy of  $C\to P$.
\end{remark}

If $C$ is a smooth member $C$ of the Wiman-Edge pencil then $H_1(C)$ is a symplectic $\Z \Af_5$-module,  and we then abbreviate 
$V_o(H_1(C))$ by $V_o(C)$. Recall that in Remark \ref{rem:cansurjection} we found (after identifying $\mu_5$ with $\Fb_5$) a natural surjection
$H_1(C)\to N_5$ of $\Z \Af_5$-modules. The following is then immediate from Lemma \ref{lemma:toN5}:

\begin{corollary}\label{cor:toN5}
There is a natural surjection $V_o(C)\to \Fb_5$ that is locally constant when $C$ varies in the smooth members of the Wiman-Edge pencil. 
\end{corollary}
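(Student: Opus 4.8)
The statement follows by combining two inputs already established: the surjection $\psi\colon E_o\twoheadrightarrow N_5$ of $\Z\Sf_5$-modules from Lemma \ref{lemma:psi}, together with the fact (Lemma \ref{lemma:toN5}) that $V_o(N_5)=\Hom_{\Z\Af_5}(E_o,N_5)$ is a one-dimensional $\Fb_5$-vector space generated by $\psi$; and the $\Af_5$-equivariant surjection $p_C\colon H_1(C)\twoheadrightarrow N_5$ constructed in Remark \ref{rem:cansurjection} out of Proposition \ref{prop:5torsion}, which is locally constant as $C$ moves through the smooth fibers of the pencil. The plan is: first produce the natural map $V_o(C)\to \Fb_5$ by functoriality, then check surjectivity, then check local constancy.

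For the map itself, I would apply the functor $V_o=\Hom_{\Z\Af_5}(E_o,-)$ to the $\Af_5$-equivariant surjection $p_C\colon H_1(C)\to N_5$. This gives an $\Oc_o$-linear map
\[
V_o(p_C)\colon V_o(C)=\Hom_{\Z\Af_5}(E_o,H_1(C))\longrightarrow \Hom_{\Z\Af_5}(E_o,N_5)=V_o(N_5),
\]
and by Lemma \ref{lemma:toN5} the target is canonically $\Fb_5$ once we record the isomorphism sending $\psi\mapsto 1$. Composing, we obtain the desired natural surjection $V_o(C)\to \Fb_5$, provided $V_o(p_C)$ is onto.

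Surjectivity of $V_o(p_C)$ amounts to lifting $\psi$ through $p_C$: we need a $\Z\Af_5$-homomorphism $E_o\to H_1(C)$ whose composite with $p_C$ is $\psi$ (or any nonzero multiple, since $\Fb_5^\times$ acts). Here I would use that $H_1(C;\Qb)\cong E_\Qb^2$ as a $\Qb\Af_5$-module, so $V_o(C)$ has $\Oc_o$-rank $2$ and is in particular nonzero; in fact it is better to argue directly that $p_C$ splits after tensoring suitably. Concretely, $\ker(p_C)$ is a $\Z\Af_5$-submodule of $H_1(C)$, and since $H_1(C)\otimes\Fb_5$ has $N_5$ as an $\Fb_5\Af_5$-quotient with $N_5$ irreducible, one can choose a $\Z\Af_5$-splitting of $p_C$ after inverting the primes away from $5$, or more cleanly observe that any $\Z\Af_5$-map $E_o\to H_1(C)$ reducing mod $5$ to a map hitting the $N_5$-quotient nontrivially does the job; such a map exists because $E_o\otimes\Fb_5$ surjects onto $N_5$ (this is $\psi\bmod 5$, which is still surjective since $\psi$ is) and $\Hom_{\Z\Af_5}(E_o,H_1(C))\otimes\Fb_5\to\Hom_{\Fb_5\Af_5}(E_o\otimes\Fb_5,H_1(C)\otimes\Fb_5)$ is injective with image large enough to see the $N_5$-isotypic piece. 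I expect this splitting/lifting step to be the main obstacle, because one must be careful that $V_o$ does not behave well with respect to exact sequences (it is only left exact, and $E_o$ is not projective over $\Z\Af_5$); the clean fix is to work mod $5$ throughout, where $\Fb_5\Af_5$ is semisimple (as $5\nmid |\Af_5|$ is false — $5\mid 60$ — so instead one notes $N_5$ is a summand of $H_1(C;\Fb_5)$ because it is irreducible and one exhibits the complementary submodule, or invokes that the relevant extension splits because $\Ext^1_{\Fb_5\Af_5}(N_5,\ker)$ vanishes on the $N_5$-isotypic component). Either way, surjectivity of $V_o(p_C)$ reduces to Lemma \ref{lemma:toN5} applied to show the image is all of the one-dimensional $V_o(N_5)$.

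Finally, local constancy: as $C$ varies in a smooth family, $H_1(C)$ forms a local system of $\Z\Af_5$-modules, and by Remark \ref{rem:cansurjection} the quotient map $p_C\colon H_1(C)\to N_5$ is a morphism of local systems (i.e.\ locally constant), the target being the constant local system $N_5$. Applying $V_o=\Hom_{\Z\Af_5}(E_o,-)$ is a functorial operation that therefore carries the local system $C\mapsto V_o(C)$ to a constant system, compatibly with $V_o(p_C)$; hence the composite $V_o(C)\to V_o(N_5)\xrightarrow{\sim}\Fb_5$ is locally constant. This is exactly what is asserted, completing the proof.
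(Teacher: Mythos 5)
Your proposal follows the paper's route exactly: apply $V_o=\Hom_{\Z\Af_5}(E_o,-)$ to the locally constant $\Af_5$-equivariant surjection $H_1(C)\twoheadrightarrow N_5$ of Remark \ref{rem:cansurjection} and identify the target with $\Fb_5$ via Lemma \ref{lemma:toN5}; the paper simply declares the corollary immediate from these two inputs, and your first and third steps are exactly that.

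The one place you go beyond the paper is in worrying about surjectivity of $V_o(C)\to V_o(N_5)$, and you are right to flag it, but your resolution is the weak point: you hedge between several splitting/Ext arguments without carrying any of them out (and the semisimplicity route is unavailable, as you note, since $5$ divides $|\Af_5|$). The clean way to close this is to observe that the evaluation map $V_o(C)\otimes_{\Oc_o}E_o\to H_1(C)$ is onto: for $C=C_o$ this is part (i) of Proposition \ref{prop:symplecticbasis} (the basis $(v,X^3v')$ identifies $E_o^2$ with $H_1(C_o)$), and it propagates to every smooth fiber by parallel transport. Hence if $p_C\circ v=0$ for all $v\in V_o(C)$, then $p_C$ would vanish on all of $H_1(C)$, contradicting its surjectivity; and since $V_o(N_5)=\Fb_5\psi$ is one-dimensional with $\psi$ onto, any nonzero element of it is already a surjection onto $N_5$. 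This is consistent with Corollary \ref{cor:monodromies}, where the map is exhibited concretely as the symplectic product with $v-v'$ followed by reduction modulo the ramified prime over $5$.
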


\subsection{The homology of the Wiman curve  as a symplectic $\Z\Sf_5$-module}\label{subsect:Wimanhomology}

The goal of this subsection is to determine $H_1(C_o)$  as a symplectic $\Z\Sf_5$-module.  This information will help us to determine the global monodromy group of the Wiman-Edge pencil.

Recall that  the $\Sf_5$-action on the Wiman curve $C_o$ makes it a $\Sf_5$-orbifold cover of an orbifold $P_o$  of type $(0; 6,4,2)$ and that the restriction of the action to $\Af_5$ defines an intermediate orbifold $P$ of type $(0; 3,2,2,2)$ such that  
$ P\to P_o$ is of degree 2 and ramifies over the orbifold points of orders $6$ and $4$. In order to identify $H_1(C_o)$ as a symplectic 
$\Z\Sf_5$-module, we take a closer look at this situation. 

Let $w$ be the affine coordinate on $P_o$ such that the orbifold points  of order $6$,  $4$ and $2$ are given by $w=\infty$, $w=0$ and 
$w=1$ respectively (this makes a $P_o$ as a projective line defined over $\Rb$ (even over $\Qb$). 
Then $P$  is also defined over $\Rb$: it admits an affine coordinate $z$ for which $w=z^2$, so that the  orbifold point   of order $3$ is given by $z=\infty$,  and the orbifold points  of order $2$ are $z=0, 1, -1$. Note that $z$ is unique up to sign. 

The preimage of the real projective line (that is, where $w$ is real) in $C_o$ defines a $\Sf_5$-invariant triangulation of 
$C_o$; if we endow $C_o$ with its hyperbolic structure, then this is in fact a hyperbolic triangulation. Let $K\subset C_o$ be a $2$-simplex of this 
triangulation which maps onto the upper half plane of $P_o$. We denote the vertices of $K$ by $p_6$, $p_4$, $p_2$ according to the
order of their stabilizer. The stabilizer of such a point is cyclic and the orientation of $C_o$ singles out  a natural generator $\tau_j$ (counter clockwise rotation over $2\pi/j$ around $p_j$).
It is elementary to see that the cycle type of these generators is  
$(3,2)$ for $\tau_6$, $(4)$ for $\tau_4$  (so both are odd) and $(2,2)$ for $\tau_2$ (so $\tau_2$ is even) and that $\tau_6\tau_4\tau_2=1$; see \S 2.3 of \cite{FL}.  In fact, 
Theorem 2.1 of \cite{FL} implies that any ordered triple  $(\tau_6, \tau_4, \tau_2)$ of generators  $\Sf_5$ whose orders are as their subscript and  satisfy $\tau_6\tau_4\tau_2=1$,  differ from the triple above by an inner automorphism. So any such triple comes from some choice of  $K$. We shall exploit this below. 
%\EL{Must change order:  $\tau_6\tau_4\tau_2=1$, e.g., replace $K$ by $L$.}

Let $K^*\subset C_o$ be the  geodesic $2$-simplex adjacent to $K$  that has in common with $K$ the edge $p_4p_6$.
Then $K\cup K^*$ is a fundamental domain of the $\Sf_5$-action on $C_o$.  We denote the vertex of $K^*$ distinct from $p_4$ and $p_6$ by $p_2'$. So $p'_2=\tau_4^{-1}p_2$ and its $\Sf_5$-stabilizer is generated
by $\tau'_2:=\tau_4^{-1}\tau_2\tau_4$. Every edge of $K$ or $K^*$ lies on a closed geodesic that lies over an interval  in $P_o(\Rb)$. 

We write $\alpha$ resp.\  $\alpha'$ for the complete geodesic in $C_o$ that contains the geodesic segment $[p_4,p_2]$ resp.\ $[p_4,p'_2]$. Both map to the segment
$[0,1]$ in $P_o(\Rb)$, but their images in $P$ are distinct and consist of  $[0,1]$ resp.\  $[0, -1]$. 
It is clear that both $\tau_4^2$ and $\tau_2$ leave $\alpha$ invariant and act in $\alpha$  as a reflection. They generate in $\alpha$ a reflection group with  $[p_4,p_2]$ as fundamental domain. Since  $[p_4,p_2]$ maps injectively to the $\Sf_5$-orbit space, it follows that
this  subgroup of $\Sf_5$ is in fact the $\Sf_5$-stabilizer of $\alpha$. Both generators 
are even permutations, and so this stabilizer is in fact a subgroup of $\Af_5$. The  product 
$\tau_4^2\tau_2$ is easily shown to be of order $3$ (otherwise, see our specific choice for the $\tau_i$'s below), and so this stabilizer is a dihedral reflection group of order $6$.  It follows that $\alpha$ has $120/6=20$ 
$\Sf_5$-translates and $60/6=10$  $\Af_5$-translates. Since $\alpha'$ is a translate of $\alpha$ under an odd permutation (namely $\tau_4$), it cannot be a 
$\Af_5$-translate. Note that if  $\vec\alpha$ stands for $\alpha$ with the orientation  defined by $[p_4,p_2]$, then the 
stabilizer of $\vec\alpha$ is the cyclic group generated by $\tau_4^2\tau_2$ and hence its $\Af_5$-obit consists of $10$ oppositely oriented  pairs. 

\begin{figure}
\centerline{\includegraphics[scale=0.300]{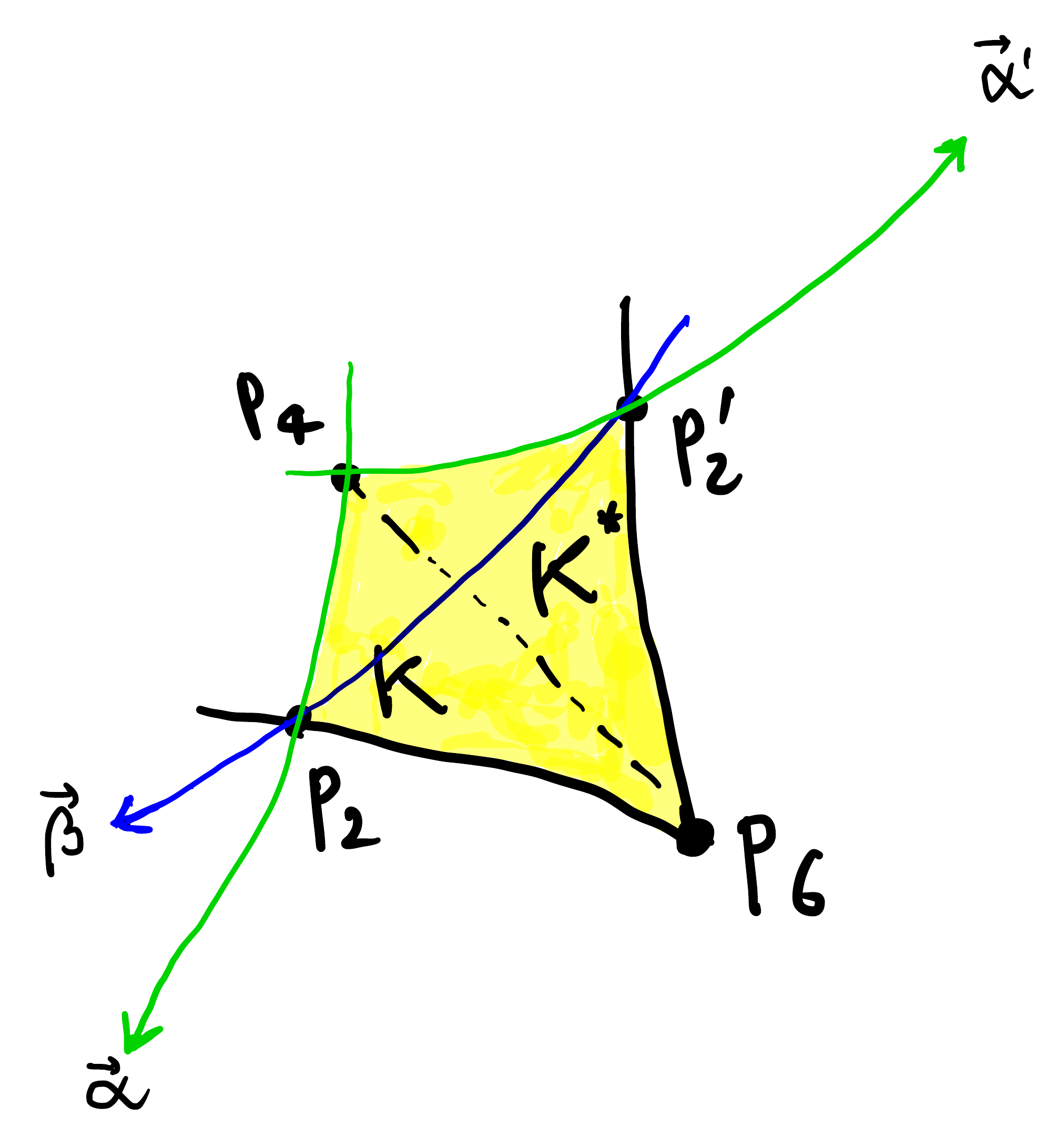}}
\caption{\footnotesize The closed oriented geodesics $\vec\alpha$, $\vec\alpha'$ and $\vec\beta$ on the Wiman curve.}
\end{figure}

We will also be interested in the geodesic $\beta$ on $C_o$ that contains  the geodesic segment $[p'_2,p_2]$. This geodesic is also closed; it
maps in $P_o$ to the unit circle $|w|=1$  and hence (upon perhaps replacing $z$ by $-z$) its image in $P$ will be the semicircle $|z|=1$, $\Re (z)\ge 0$. 
A similar argument
shows that the stabilizer of $\beta$ is the reflection group generated by the even permutations $\tau_2$ and $\tau'_2$. The product
$\tau_2\tau_2'$ has order $5$ and hence the stabilizer of $\beta$ is a dihedral subgroup $\Af_5$ of order $10$, whereas the stabilizer of 
$\vec\beta$ (the orientation being given by $[p'_2, p_2]$) is generated by $\tau_2\tau_2'$.  It follows that 
$\beta$ has $120/10=12$  $\Sf_5$-translates and $60/10=6$  $\Af_5$-translates. A $\Sf_5$-translate which is not an 
$\Af_5$-translate is for instance $\beta':=\tau_4(\beta)$. Its image in $P$ will be the semicircle $|z|=1$, $\Re (z)\le 0$.

Since the  $\Af_5$-orbit of $\alpha$ resp.\  $\beta$ is the preimage of an arc in $P$ that connects two orbifold points of order $2$,  
it must consist of resp.\  $10$, $6$ closed geodesics \emph{that  are pairwise disjoint}.  As shown in \S 2 of \cite{FL}, these $\Af_5$-orbits make up a configuration of $K_5$-type resp.\ dodecahedral type. The same is true for the  $\Af_5$-orbits of $\alpha'$ and $\beta'$.
Recall that at the beginning of \S\ref{sect:some algebra}  we specified the generators $\sigma_5=(01234)$, $\sigma_3=(142)$ and $\sigma_2=(04)(23)$ for $\Af_5$.

\begin{lemma}\label{lemma:goodchoice}
We can  choose $K$ such that the associated triple  $(\tau_6, \tau_4, \tau_2)$ in $\Sf_5$ has the property that $\sigma_3$ resp.\ $\sigma_5$ generates the stabilizer of $\vec\alpha$ resp.\ $\vec\beta$ and $\sigma_5^{-1}\sigma_3\sigma_5$ stabilizes $\vec\alpha':=\tau_4(\vec\alpha)$.
\end{lemma}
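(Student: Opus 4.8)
\emph{Proof proposal.}\quad
The plan is to reduce the statement to the existence of a single explicit triple of permutations and then to exhibit it. By Theorem~2.1 of \cite{FL}, a choice of the $2$-simplex $K$ is the same thing, up to an inner automorphism of $\Sf_5$, as a choice of a triple $(\tau_6,\tau_4,\tau_2)$ of generators of $\Sf_5$ of respective orders $6,4,2$ with $\tau_6\tau_4\tau_2=1$; conversely, every such triple is realized by some $K$. All the data occurring in the lemma are group-theoretic functions of this triple: by the discussion preceding the lemma one has $\Stab(\vec\alpha)=\langle\tau_4^2\tau_2\rangle$ (cyclic of order $3$), $\Stab(\vec\beta)=\langle\tau_2\tau_2'\rangle=\langle\tau_2\,\tau_4^{-1}\tau_2\tau_4\rangle$ (cyclic of order $5$), and $\Stab(\vec\alpha')=\tau_4\Stab(\vec\alpha)\tau_4^{-1}$ since $\vec\alpha'=\tau_4(\vec\alpha)$. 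Hence it suffices to produce one triple $(\tau_6,\tau_4,\tau_2)$ as above with $\langle\tau_4^2\tau_2\rangle=\langle\sigma_3\rangle$, with $\langle\tau_2\,\tau_4^{-1}\tau_2\tau_4\rangle=\langle\sigma_5\rangle$, and with $\sigma_5^{-1}\sigma_3\sigma_5\in\tau_4\langle\sigma_3\rangle\tau_4^{-1}$.

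I would build the triple starting from $\tau_2$. Since $\tau_2$ is the reflection common to the dihedral stabilizers of $\alpha$ and of $\beta$, it must normalize both $\langle\sigma_3\rangle$ and $\langle\sigma_5\rangle$ and act on each of them by inversion; a short check shows that $\tau_2:=(03)(12)$ is the \emph{unique} involution of $\Sf_5$ with this property (any such involution is forced to lie in $\Af_5$, i.e.\ to have cycle type $(2,2)$, which makes the check a small finite one). The requirement $\tau_4^2\tau_2=\sigma_3$ then forces $\tau_4^2=\sigma_3\tau_2=(03)(24)$, which has exactly two $4$-cycle square roots, mutually inverse; one of them, $\tau_4:=(0234)$, will satisfy the remaining two conditions. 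Finally set $\tau_6:=(\tau_4\tau_2)^{-1}=(04)(123)$.

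It then remains to run the verifications, all of which are direct computations in $\Sf_5$: the elements $\tau_6,\tau_4,\tau_2$ have orders $6,4,2$, satisfy $\tau_6\tau_4\tau_2=1$, and generate $\Sf_5$ (the subgroup they generate contains an odd permutation together with a $3$-cycle, a $4$-cycle and a $5$-cycle); moreover $\tau_4^2\tau_2=(142)=\sigma_3$, $\tau_2\,\tau_4^{-1}\tau_2\tau_4=(02413)=\sigma_5^{2}$, so its cyclic span is $\langle\sigma_5\rangle$, and $\tau_4\sigma_3\tau_4^{-1}=(031)=\sigma_5^{-1}\sigma_3\sigma_5$, so $\sigma_5^{-1}\sigma_3\sigma_5\in\tau_4\langle\sigma_3\rangle\tau_4^{-1}$. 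By the first paragraph this triple is realized by a suitable $K$, and for that $K$ all three assertions of the lemma hold. The only genuinely non-mechanical step is locating the triple, i.e.\ identifying $\tau_2$ as the unique common inverting involution and selecting the correct $4$-cycle square root for $\tau_4$; everything after that is bookkeeping, the one small caveat being that one must stick to the orientation conventions for $\vec\alpha,\vec\beta,\vec\alpha'$ fixed in the text when translating "stabilizer of an oriented geodesic" into the cyclic subgroups above. \hfill$\square$
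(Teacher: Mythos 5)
Your proof is correct and takes essentially the same route as the paper's: both exhibit an explicit triple $(\tau_6,\tau_4,\tau_2)$ with $\tau_2=(03)(12)$, verify the three stabilizer conditions by direct computation in $\Sf_5$, and invoke Theorem 2.1 of \cite{FL} to realize the triple by some choice of $K$. Your triple differs only in taking $\tau_4=(0234)$, the inverse of the paper's $(0432)$ (with $\tau_6$ adjusted accordingly); both choices satisfy all the required identities, and your added derivation of how to locate the triple is sound but not load-bearing.
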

\begin{proof}
We take 
\[
\tau_6=(012)(34),\quad  \tau_4=(0432),\quad  \tau_2=(03)(12).
\] 
Then $\tau_6 \tau_4 \tau_2=1$.  Since $\tau_4^2=(03)(24)$, $\alpha$ is stabilized by $\tau_4^2\tau_2=(03)(24)(03)(12)=(24)(12)=(142)=\sigma_3$.

Furthermore,  $\tau'_2=\tau_4\tau_2\tau_4^{-1}= (0432)(03)(12)(0234)=(01)(24)$ and $\beta$  is stabilized by $\tau_2\tau'_2=(03)(12)(01)(24)=(02413)=\sigma_5^2$ and hence also by $\sigma_5$.

Finally, $\alpha'$ is stabilized  by $\tau'_2\tau_4^2=(01)(24)(03)(24)=(01)(03)=(031)$.  But we also have 
$\sigma_5^{-1}\sigma_3\sigma_5=(04321)(142)(01234)=(031)$.
\end{proof}

From now on we assume that $K$ and $(\tau_6, \tau_4, \tau_2)$ are as in Lemma \ref{lemma:goodchoice}.

\begin{lemma}\label{lemma:intersection1}
Every $\Af_5$-translate of $\alpha'$ meets $\alpha$ transversally  in at most one point.
Similarly, exactly three $\Af_5$-translates of $\beta$ meet  $\alpha$ resp.\  $\alpha'$, and they do so simply in at most one point.
\end{lemma}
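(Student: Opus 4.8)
The plan is to reduce both statements to explicit computations inside $\Sf_5$ using the concrete generators fixed in Lemma~\ref{lemma:goodchoice}, exploiting the dictionary between intersection points of $\Af_5$-translates of these closed geodesics and double cosets (or, more precisely, pairs of cosets with a common lift). Recall that each of $\alpha$, $\alpha'$, $\beta$ arises as the preimage in $C_o$ of a geodesic arc in the orbifold $P$ joining two order-$2$ orbifold points, and that its $\Af_5$-stabilizer is a known dihedral subgroup (generated by the stated elements together with the relevant reflection). A point of $C_o$ lying on both $\alpha$ and a translate $g\cdot\alpha'$ (with $g\in\Af_5$) must map in $P$ to an intersection point of the two underlying arcs; since all the arcs in question are embedded geodesic segments between orbifold points, in the orbifold $P$ the arc $\bar\alpha$ and the arc $\bar\alpha'$ either are disjoint, share an endpoint, or cross transversally in at most one interior point. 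So the content is a counting statement about how the $\Af_5$-orbit of a lift of a crossing point distributes over the translates.

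The key steps, in order: First I would record the precise stabilizers as subgroups of $\Sf_5$ from the choices in Lemma~\ref{lemma:goodchoice}: $\Stab_{\Af_5}(\vec\alpha)=\la\sigma_3\ra$ (order $3$), with $\Stab_{\Af_5}(\alpha)$ the order-$6$ dihedral group $\la\sigma_3,\tau_2\ra$; likewise $\Stab_{\Af_5}(\vec\alpha')=\la\sigma_5^{-1}\sigma_3\sigma_5\ra=\la(031)\ra$; and $\Stab_{\Af_5}(\vec\beta)=\la\sigma_5\ra$ (order $5$), with $\Stab_{\Af_5}(\beta)$ the order-$10$ dihedral group. Second, I would identify the (at most one) crossing point $x$ of the base arcs $\bar\alpha$ and $\bar\alpha'$ in $P$, lift it to a point $\tilde x\in\alpha\cap\alpha'\subset C_o$ (here $p_4$ itself, the common endpoint $z=0$, is a vertex where $\alpha$ and $\alpha'$ meet, so I must be careful whether ``intersect'' is meant to include the orbifold vertex or only transverse interior crossings — the statement says ``transversally in at most one point,'' so the claim is that away from vertices there is at most one transverse intersection, which at the orbifold level is the statement that the two interior arcs meet at most once). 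Third, for the $\beta$-versus-$\alpha$ count I would use that $\beta$ maps to the semicircle $|z|=1$, $\Re(z)\ge 0$, whose two endpoints are among $z=0,1,-1$ — precisely the endpoints of $\alpha$ and $\alpha'$ — so the base arcs $\bar\beta$ and $\bar\alpha$ share exactly one endpoint, giving the ``at most one point'' transversality; the count ``exactly three $\Af_5$-translates'' then comes from comparing orbit sizes: $\alpha$ has $10$ translates and $\beta$ has $6$, and a double-counting argument via the incidence set, together with the symmetry groups $\la\sigma_3\ra$ and $\la\sigma_5\ra$ acting on it, pins the number of $\beta$-translates hitting a fixed $\alpha$ to $3$. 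Concretely this is the orbit-counting identity $|\{g\in\Af_5 : g\beta\cap\alpha\ne\emptyset\}| = 3\cdot 10$ reorganized as $3$ translates of $\beta$ per translate of $\alpha$.

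The cleanest route to the exact numbers is the configuration input already cited: by \S2 of \cite{FL} the $\Af_5$-orbit of $\alpha$ is a $K_5$-configuration of $10$ pairwise-disjoint geodesics and the $\Af_5$-orbit of $\beta$ is a dodecahedral configuration of $6$ pairwise-disjoint geodesics, and the mutual incidence of a $K_5$-configuration against a dodecahedral configuration inside the icosahedral symmetry is classical: each edge-geodesic of the $K_5$ meets exactly $3$ of the dodecahedral geodesics. So I would phrase the proof as: (i) translate ``meets'' into the combinatorics of the $\Af_5$-sets $\Af_5/\la\sigma_3\ra$ and $\Af_5/\la\sigma_5\ra$ (edges of $K_5$ versus faces of the dodecahedron, equivalently vertices of the icosahedron); (ii) check transversality and the ``at most one point'' clause at the orbifold level, using that all arcs are embedded and meet only at shared endpoints or single transverse crossings; (iii) read off the number $3$ from the standard incidence count between these two $\Af_5$-sets, and similarly get ``at most one'' for $\alpha'$ by the same argument with $\sigma_5^{-1}\sigma_3\sigma_5$ in place of $\sigma_3$.

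The main obstacle I anticipate is step (ii): ensuring that two distinct $\Af_5$-translates of these geodesics cannot cross \emph{twice}. A priori two closed geodesics on a hyperbolic surface can intersect many times, so the ``at most one point'' assertion is not formal — it uses that each of $\alpha,\alpha',\beta$ maps \emph{injectively} (as a set, away from the finitely many orbifold vertices) onto an embedded arc in $P$, so that the preimage of a single transverse crossing of base arcs is a single $\Af_5$-orbit of points, exactly one of which lies on the chosen translate. Making this rigorous requires knowing that the covering $C_o\to P$ is unramified over the interior of each arc (true, since the arcs avoid the order-$3$ and order-$6$ points and meet the order-$2$ points only at endpoints) and that distinct translates of an arc in the orbifold either coincide or meet only in the ways permitted by the $K_5$/dodecahedral combinatorics. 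I would isolate this as the one genuinely geometric lemma and dispatch the rest by the explicit group computations above.
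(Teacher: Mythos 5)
Your overall strategy is the same as the paper's: pass to the orbifold $P$, observe that the arcs underlying $\alpha$, $\alpha'$, $\beta$ meet only at the order-$2$ orbifold points $z=0,\pm1$ (so that all intersections upstairs occur in the fibers over those points), and then count using the stabilizers fixed in Lemma~\ref{lemma:goodchoice}. That reduction, and your identification of the stabilizers, is correct.

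The gap is in your resolution of what you yourself flag as the main obstacle. You claim that the preimage of a crossing of base arcs is a single $\Af_5$-orbit ``exactly one of which lies on the chosen translate.'' That is false: the fiber over $z=1$ meets $\alpha$ in \emph{three} points, namely the orbit of $p_2$ under the order-$6$ group $\Stab_{\Af_5}(\alpha)$ (orbit size $6/2=3$), and likewise the fiber over $z=0$ meets $\alpha$ in the three points $\sigma_3^i p_4$. Through each of these three points passes a unique member of the $\Af_5$-orbit of $\beta$ (resp.\ of $\alpha'$), by pairwise disjointness of that orbit; these members are $\sigma_3^i\beta$ (resp.\ $\sigma_3^i\alpha'$), $i\in\Zb/3$. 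The whole content of ``meets $\alpha$ in at most one point'' is that these three translates are \emph{pairwise distinct} --- if two coincided, that single translate would meet $\alpha$ twice --- and this does not follow from injectivity of the arcs or any embeddedness statement; it is a group-theoretic fact. The paper verifies it by showing that the stabilizers $\sigma_3^i\la\sigma_5\ra\sigma_3^{-i}$ of the $\sigma_3^i\vec\beta$ are pairwise distinct (for $\beta$ one can also simply note $3\nmid|\Stab_{\Af_5}(\beta)|=10$, so $\sigma_3^{i-j}\notin\Stab_{\Af_5}(\beta)$ for $i\ne j$; for $\alpha'$ a genuine computation is needed since $\Stab_{\Af_5}(\alpha')$ has order $6$ and contains elements of order $3$). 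Once distinctness is in hand, ``exactly three translates of $\beta$ meet $\alpha$'' is immediate. Your alternative routes to the number $3$ --- the double count $30=10\cdot 3=6\cdot 5$ of incidence points, or the ``classical'' $K_5$-versus-dodecahedron incidence --- count intersection \emph{points}, not translates, so they too only yield ``at most three translates'' until the same distinctness is established; and the classical incidence claim is asserted rather than proved. The missing ingredient is precisely this stabilizer computation.
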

\begin{proof}
The $\Af_5$-translates of $\alpha'$ meet the fundamental segment $[p_4, p_2]$ of $\alpha$  in $p_4$ only and through 
that point passes just one member, namely $\alpha'=\tau_4\alpha$. It then follows that the $\Sf_5$-translates of $\alpha$ distinct from $\alpha$ meeting $\alpha$ is the collection $\{\sigma_3^i\alpha'\}_i$.  These are pairwise distinct, proving the first assertion.  

The property regarding $\beta$ is proved in a similar fashion. The cyclic group generated by $\sigma_5$ resp.\  $\sigma_3$ is the $\Af_5$-stabilizer of $\vec\alpha$ resp. $\vec\beta$. Any $\Af_5$-translate  of $\beta$ which intersects $\alpha$ is of the form $\sigma_3^i\beta$ for some $i\in\Zb/3$. The $\Af_5$-stabilizer of $\sigma_3^i\vec\beta$ is generated by $\sigma_3^i\sigma_5\sigma_3^{-i}$. We have 
$\sigma_3\sigma_5\sigma_3^{-1}=(142)(01234)(124)=(04132)$ and $\sigma_3^{-1}\sigma_5\sigma_3 =(124)(01234)(142)=(02431)$ and neither is a power of $\sigma_5=(01234)$. So these stabilizers  are pairwise distinct. Hence so are the  $\{\sigma_3^i\vec\beta\}_{i\in\Zb/3}$, so that $\beta$ meets $\alpha$ in at most one point. 

Changing the orientation of $C_o$ has the effect of replacing $\tau_4$ by its inverse, and this exchanges $K$ and $K^*$, $\alpha$ and $\alpha'$, but preserves $\beta$. So  $\beta$ meets $\alpha'$ in at most one point.  
\end{proof}

Note that  $\vec\alpha\cdot\vec\alpha'=1$,  $\vec\beta\cdot \vec\alpha=1$ and $\vec\beta \cdot \vec\alpha'=1$. 
They define classes  in $H_1(C_o)$ which we continue to denote by the same symbol.
We write $\Delta (\alpha)$ for the $\Af_5$-orbit of $\vec\alpha$ in $H_1(C_o)$. This is a set of $10$ antipodal pairs. We define 
$\Delta (\alpha')$ and $\Delta (\beta)$ likewise: these are  sets of $10$ resp.\ $6$  antipodal pairs.

The following proposition gives us  the structure of $H_1(C_o)$ as a symplectic $\Sf_5$-module that we need
in order to determine the monodromy group. The group $\Sf_5$ acts in both $E_o$ and $H_1(C_o)$, but elements of   $V_o(C_o)=\Hom_{\Z A_5}(E_o,H_1(C_o))$  will rarely be $\Sf_5$-equivariant. This gives therefore   rise to an anti-involution 
$\iota$ in $V_o(C_o)$: for $\tau\in \Sf_5$,  the element $\iota(v):= \tau v\tau^{-1}$ is also in $V_o(C_o)$ and  only depends on the image of $\tau$ in $\Sf_5/\Af_5\cong \Zb/ 2$. The resulting involution is anti-linear with respect to the Galois involution in $\Oc_o$: for $\lambda\in \Oc_o$ and $v\in V_o(C_o)$, we have  $\iota (\lambda v)=\lambda'\iota (v)$.

\begin{proposition}\label{prop:symplecticbasis}
There exists a basis $(v, v')$ of $V_o(C_o)$  with the following properties:
\begin{enumerate}
\item[(i)]  The map $E_o^2\to H_1(C_o)$ given by $(a_1, a_2)\mapsto v(a_1)+X^3v'(a_2)$ is an isomorphism of $\Zb\Sf_5$-modules which maps each summand onto a Lagrangian submodule of 
$H_1(C_o)$, and is symplectic in the sense that $A(v, X^3v')=1$.
\item[(ii)] The anti-involution $\iota$ in $V_o(C_o)$ takes $(v, v')$ to $\pm(v', v)$ (we leave the sign as an unknown here). 
\item[(iii)] We have  $v(\Delta_c)=\Delta(\alpha)$,   $v'(\Delta_c)=\Delta(\alpha')$  and  $(v-v')(X^3\Delta_\ir)=\Delta(\beta)$.  
\end{enumerate}
\end{proposition}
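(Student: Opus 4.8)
The plan is to build the basis $(v,v')$ geometrically out of the three $\Af_5$-invariant geodesic configurations $\Delta(\alpha)$, $\Delta(\alpha')$, $\Delta(\beta)$ on $C_o$, and then verify the algebraic conditions (i)--(iii) by comparing these configurations with the three distinguished $\Af_5$-orbits $\Delta_\ir$, $\Delta_\infty$, $\Delta_c$ of Lemma~\ref{lemma:generators}. The key observation is that $\Delta_c$ is the $\Af_5$-orbit of $e+e_0+e_1$, whose $\Af_5$-stabilizer is generated by $\sigma_3=(142)$; by Lemma~\ref{lemma:goodchoice}, $\sigma_3$ is exactly the generator of the $\Af_5$-stabilizer of $\vec\alpha$, and the $\Af_5$-stabilizer of $\vec\alpha'$ is generated by $\sigma_5^{-1}\sigma_3\sigma_5$. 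Since $H_1(C_o;\Q)\cong E_\Q^2$ as $\Q\Af_5$-modules and $E_\Q$ is irreducible (with commutant $K$), the set $V(C_o)_\Q=\Hom_{K\Af_5}(E_\Q, H_1(C_o;\Q))$ is a $K$-vector space of dimension $2$; choosing a vector $\vec\alpha\in H_1(C_o)$ with stabilizer $\langle\sigma_3\rangle$ pins down a unique (up to $K$-scalar) $\Af_5$-map $v$ sending $e+e_0+e_1\mapsto\vec\alpha$, hence $\Delta_c\mapsto\Delta(\alpha)$; likewise one gets $v'$ with $\Delta_c\mapsto\Delta(\alpha')$ using the element $\tau_4(\vec\alpha)=\vec\alpha'$ and the conjugated stabilizer. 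So first I would define $v,v'$ this way, and check they are linearly independent over $K$ (they are, since $\Delta(\alpha)$ and $\Delta(\alpha')$ span different sublattices — a fact already implicit in Lemma~\ref{lemma:intersection1}, since $\vec\alpha\cdot\vec\alpha'=1\neq 0$).

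Next I would arrange the normalizations. After rescaling $v,v'$ by units of $\Oc_o$ (which are $\pm X^{3k}$) I may assume $v,v'$ land in $H_1(C_o)$ and not merely in $H_1(C_o)\otimes\Q$; the precise factor $X^3$ appearing in (i) is forced by a symplectic-volume computation. Concretely, the evaluation pairing $A(\cdot,\cdot)\colon V_o(C_o)\times V_o(C_o)\to\Oc_o$ constructed in \S\ref{sect:some algebra} is $\Oc_o$-bilinear, antisymmetric and $K$-nondegenerate; I would compute $A(v,v')$ by evaluating $v(x)\cdot v'(y)$ on a convenient pair of basis vectors of $E_o$ (e.g. $x=y=e+e_0+e_1$, using $\vec\alpha\cdot\vec\alpha'=1$ and the transitivity of $\Af_5$ on antipodal pairs to pin down the endomorphism $A(v,v')\in\Oc_o$). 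The index-$2$ discrepancy between $\Oc$ and $\Oc_o$ as commutants of $E$ vs.\ $E_o$, together with the fact that $H_1(C_o)$ is a unimodular symplectic lattice of rank $12$ while $E_o$ has discriminant group $(\Z/2)^2$, will show that $A(v,v')$ must be a generator of $\Oc_o$ as an $\Oc$-module, i.e.\ up to a unit equal to $2X$ or, after absorbing a power of $X^3$ into $v'$, that $A(v,X^3v')=1$. That normalization simultaneously forces $(a_1,a_2)\mapsto v(a_1)+X^3v'(a_2)$ to be an isomorphism onto $H_1(C_o)$ (not just a finite-index embedding) — the cokernel is killed by comparing discriminants — and makes each summand Lagrangian, since $v(E_o)$ and $v'(E_o)$ are each isotropic because $\vec\alpha$ has stabilizer $\langle\sigma_3\rangle$ of order $3$ so $s(A(v,v)(x),y)=v(x)\cdot v(y)$ defines a skew $\Af_5$-endomorphism of $E_o$, forcing $A(v,v)\in\Oc_o$ to be $0$ by self-adjointness of $X$. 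This proves (i).

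For (ii): the anti-involution $\iota(v)=\tau v\tau^{-1}$ for $\tau\in\Sf_5\smallsetminus\Af_5$ is $\Oc_o$-antilinear. Choosing $\tau=\tau_4$ (an odd permutation by the cycle type $(4)$ computed before Lemma~\ref{lemma:goodchoice}), conjugation by $\tau_4$ sends $\langle\sigma_3\rangle=\Stab(\vec\alpha)$ to $\Stab(\vec\alpha')$ and sends $\vec\alpha$ to $\vec\alpha'$ by construction; hence $\iota(v)$ is an $\Af_5$-map $E_o\to H_1(C_o)$ sending $e+e_0+e_1$ (or rather its $\tau_4$-image, which lies in the same $\Af_5$-orbit $\Delta_c$ since $\tau_4$ normalizes $\Af_5$) to a vector in $\Delta(\alpha')$, so $\iota(v)=\pm X^{3k}v'$ for some $k$; the $K$-antilinearity plus the normalization $A(v,X^3v')=1$ pins $k$ down, and a symmetric argument gives $\iota(v')=\pm X^{3k'}v$; the two signs and exponents are then constrained by $\iota^2=\mathrm{id}$, leaving only the overall $\pm$ stated in (ii) (which, as the authors note, they do not resolve). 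For (iii): the identities $v(\Delta_c)=\Delta(\alpha)$ and $v'(\Delta_c)=\Delta(\alpha')$ are built in. The remaining identity $(v-v')(X^3\Delta_\ir)=\Delta(\beta)$ I would prove by noting that $\Delta_\ir$ is the $\Af_5$-orbit of $e$ with stabilizer $\langle\sigma_5\rangle$, while $\Delta(\beta)$ is the $\Af_5$-orbit of $\vec\beta$ with stabilizer $\langle\sigma_5\rangle$ (Lemma~\ref{lemma:goodchoice}); so $(v-v')(X^3 e)$ lies in the $\Af_5$-invariant rank-$3$ $\Oc$-submodule fixed (projectively) by $\sigma_5$, and one checks using $\vec\beta\cdot\vec\alpha=\vec\beta\cdot\vec\alpha'=1$ and parts (i)--(ii) that $(v-v')(X^3 e)$ has the same intersection numbers with $v(\Delta_c)$ and $v'(\Delta_c)$ as $\vec\beta$ does, which together with the stabilizer match determines it up to sign, giving the claimed orbit equality.

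The main obstacle will be step two — the symplectic bookkeeping that forces the precise factor $X^3$ and the equality $A(v,X^3v')=1$ rather than merely a statement up to units and finite index. This requires simultaneously tracking (a) the discriminant of the symplectic lattice $E_o$ under the pairing $s$, (b) the behaviour of the functor $V_o$ on the chain $E\subset E_o\subset E^\vee$ (where $V_o$ produces the non-free module $\Oc$ mapping by $\times 2$ into $\Oc_o$, as computed in \S\ref{sect:some algebra}), and (c) the unimodularity of the intersection form on $H_1(C_o)$; reconciling these is where all the arithmetic of the index-$2$ inclusion $\Oc_o\subset\Oc$ enters, and it is easy to be off by a unit or by the ambiguity between $2X$ and $2X-1$. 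Everything else is either a direct consequence of Lemma~\ref{lemma:goodchoice} and Schur's lemma, or a finite explicit computation in the $12$-dimensional lattice using the intersection numbers $\vec\alpha\cdot\vec\alpha'=\vec\beta\cdot\vec\alpha=\vec\beta\cdot\vec\alpha'=1$ recorded just before the proposition.
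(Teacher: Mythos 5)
Your overall architecture matches the paper's: define $v$ and $v'$ by matching the $\sigma_3$- and $\sigma_5^{-1}\sigma_3\sigma_5$-stabilized vectors $e+e_0+e_1$ and $\sigma_5^{-1}(e+e_0+e_1)$ of $E_o$ with $\vec\alpha$ and $\vec\alpha'$, compute $A(v,v')$, deduce the isomorphism in (i) from unimodularity, get (ii) from conjugation by $\tau_4$, and get (iii) from a third homomorphism $u$ with $u(e)=\vec\beta$. You also correctly flag the computation of $A(v,v')$ as the crux. But your proposed mechanism for that step does not work. First, a single evaluation $v(e+e_0+e_1)\cdot v'(\sigma_5^{-1}(e+e_0+e_1))=\vec\alpha\cdot\vec\alpha'=1$ is one $\Z$-linear condition on the element $A(v,v')=p+2qX\in\Oc_o$, i.e.\ on two integer unknowns; $\Af_5$-transitivity on $\Delta_c$ adds nothing beyond the fact (already used) that the pairing is $\Af_5$-invariant and hence given by a single scalar in $\Oc_o$. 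Second, the discriminant comparison is circular: unimodularity of $H_1(C_o)$ forces $N_{K/\Q}(A(v,v'))=\pm1$ only \emph{after} you know that $v(E_o)+X^3v'(E_o)$ is all of $H_1(C_o)$, which is exactly what is being proved; a priori the image could be a proper finite-index sublattice, in which case $|N(A(v,v'))|$ measures that index and nothing is pinned down. Indeed your guessed answer ($A(v,v')$ equal to $2X$ up to a unit, of norm $-4$) is wrong and incompatible with the isomorphism you need in (i); the correct value is $A(v,v')=X^{-3}=2X-3$, which is a unit of $\Oc_o$.

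The missing ingredient is the pair Lemma~\ref{lemma:intersection1}/Lemma~\ref{lemma:n=-1}: the geometric fact that any $\Af_5$-translate of $\alpha'$ meets $\alpha$ transversally in at most one point means the $\Af_5$-invariant pairing $(x,y)\mapsto v(x)\cdot v'(y)$ takes values in $\{-1,0,1\}$ on all of $\Delta_c\times\Delta_c$, and the purely algebraic Lemma~\ref{lemma:n=-1} converts this bounded-values condition into $A(v,v')=\pm X^{-3}$; only then does the single evaluation $\vec\alpha\cdot\vec\alpha'=1$, compared with $s(e+e_0+e_1,X^{-3}\sigma_5^{-1}(e+e_0+e_1))=1$, fix the sign. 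The same gap recurs in your treatment of (iii): knowing only $\vec\beta\cdot\vec\alpha=\vec\beta\cdot\vec\alpha'=1$ gives two integer equations for the four integer coordinates of $(A(u,v),A(u,v'))\in\Oc_o^2$; one again needs the full statement of Lemma~\ref{lemma:intersection1} (exactly three translates of $\beta$ meet $\alpha$, each in at most one point) fed into the second case of Lemma~\ref{lemma:n=-1} to conclude $A(u,v)=A(u,v')=1$ and hence $u=X^3(v-v')$. Your isotropy argument for $A(v,v)=A(v',v')=0$ (skew-adjointness of $A(v,v)$ versus self-adjointness of every element of $\Oc_o$) is fine, and your (ii) is salvageable, though the paper's route --- $\iota(v)$ and $v'$ both carry $\Delta_c$ onto the same finite set $\Delta(\alpha')$, so they differ by a unit of finite order, i.e.\ a sign --- is cleaner than tracking powers of $X^3$ through $\iota^2=\mathrm{id}$.
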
 

The proof of Proposition~\ref{prop:symplecticbasis} will use the following Lemma.

\begin{lemma}\label{lemma:n=-1}
Suppose that  $\Delta\subset E_\Q$ consists of $10$ (resp.\ 15) antipodal pairs and has the property that  $s$ takes on  $\Delta\times \Delta_c$ 
values in $\{-1,0,1\}$. In the second case, assume also that at most three antipodal pairs of $ \Delta$ are not perpendicular to a member of $\Delta_c$.
Then $\Delta$ equals $X^{-3}\Delta_c$ (resp. $\Delta_\ir$ or $X^{-3}\Delta_\ir$).
\end{lemma}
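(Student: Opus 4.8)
The strategy is to exploit the $\Oc_o$-module structure of $E_o$ together with the fact (established in Lemma \ref{lemma:generators}) that the three special orbits $\Delta_\ir$, $\Delta_\infty$, $\Delta_c$ are characterized by their cardinalities and by which icosahedral element stabilizes a chosen representative. So I would first reduce the problem to identifying a single vector: since $\Delta$ is an $\Af_5$-orbit of antipodal pairs, it is determined by the $\Af_5$-orbit of any one of its elements, hence by the $\Af_5$-stabilizer of such an element. The hypothesis constrains the Gram data $s(\delta,\gamma)$ for $\delta\in\Delta$, $\gamma\in\Delta_c$, and the plan is to show these constraints force $\Delta$ to be one of the listed orbits. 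A clean way to organize this: the $\Oc_o$-valued form $A$ on $V_o(C_o)$ (or more elementarily the $K$-bilinear extension $s_K$ of $s$ from Remark \ref{rem:isocreal}) lets one pass between $s$-values and the action of $X$; in particular $X^{-3}\Delta_c$ makes sense because $X$ is a unit of $\Oc$ and $X^3=2X+1$ lies in $\Oc_o$, so $X^{-3}$ preserves $E_o$ up to the index-$2$ issue — I would check $X^{-3}\Delta_c\subset E_o$ directly from the explicit description of $\Delta_c$ in Lemma \ref{lemma:generators}.

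For the first case ($|\Delta|=10$ pairs): I would argue that a set of $10$ antipodal pairs spanning $E_\Q$ whose $s$-pairing against $\Delta_c$ lands in $\{-1,0,1\}$ must itself be a single $\Af_5$-orbit of a vector with a nontrivial stabilizer, and the only candidate orbit sizes of the right cardinality ($10$ pairs) are the $\Af_5$-orbits of vectors stabilized by an order-$3$ element, i.e.\ $\Sf_5$-conjugates of $e+e_0+e_1$; these are exactly $\Delta_c$ and its images under $\Oc$. One then pins down which image by computing a few Gram numbers: $\Delta_c$ itself pairs with $\Delta_c$ taking the value $3$ somewhere (e.g.\ $s(e+e_0+e_1,e+e_0+e_1)=3$), so $\Delta=\Delta_c$ is excluded, and running through the short list $X^k\Delta_c$ one finds $X^{-3}\Delta_c$ is the unique one whose pairing with $\Delta_c$ stays in $\{-1,0,1\}$ — this is a finite, mechanical check using the formulas $X(e)=\varepsilon$, $X(e_i)=\varepsilon-(e_{i+2}+e_{i-2})$ and $X^2=X+1$. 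For the second case ($|\Delta|=15$ pairs): the only $\Af_5$-orbits of that size are $\Sf_5$-conjugates of $e+e_0$, whose orbit is $\Delta_\infty$; but $\Delta_\infty\subset E$ and $X^{-3}\Delta_\ir$ is the $\Af_5$-orbit of a suitable vector — wait, $\Delta_\ir$ has only $6$ pairs, not $15$, so here I would instead observe that the hypothesis "at most three antipodal pairs of $\Delta$ are not perpendicular to a member of $\Delta_c$" is extremely restrictive: a generic $15$-pair orbit meets many directions non-orthogonally, and the only orbits with so few non-orthogonal pairs relative to $\Delta_c$ turn out to be $\Delta_\ir$ or $X^{-3}\Delta_\ir$ (the $6$-pair orbits, with the count of non-orthogonal pairs being at most $3$), contradicting $|\Delta|=15$ unless in fact $\Delta$ was mislabeled — I would recheck the cardinality bookkeeping here against Lemma \ref{lemma:generators}, since $\Delta_\ir$ consists of $6$ pairs and the statement's "(resp.\ $\Delta_\ir$ or $X^{-3}\Delta_\ir$)" must be the intended conclusion.

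The main obstacle is the combinatorial explosion in verifying the Gram conditions: one has to know, for each of the finitely many candidate orbits ($\Af_5$-orbits of the six or so vectors of small $s$-length, and their $\Oc$-translates), the full multiset of values $\{s(\delta,\gamma): \delta\in\Delta,\ \gamma\in\Delta_c\}$. I would tame this by working in the orthonormal basis $\{e,e_0,\dots,e_4\}$ of Lemma \ref{lemma:generator}, where $s$ is the standard dot product, so that each check is a dot product of two explicit $\pm1,0$ vectors; and by using the $\Af_5$-action to reduce "for all $\delta\in\Delta$" to "for a set of orbit representatives of $\Delta$ under the stabilizer of a fixed $\gamma_0\in\Delta_c$". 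The key structural input that makes the list of candidates finite is Lemma \ref{lemma:generators} (orbits of a given size are classified) together with the fact that $\Oc_o\subset\Oc$ has index $2$ and $\Oc^\times$ is generated by $-1$ and $X$, so only finitely many $\Oc$-translates of each orbit can again be an $\Af_5$-orbit inside $E_\Q$ with bounded $s$-length against $\Delta_c$.
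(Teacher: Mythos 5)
Your overall strategy is the paper's: force $\Delta$ into $E_o$, observe that it must be $\lambda\Delta_c$ (resp.\ $\lambda\Delta_\ir$) for a scalar $\lambda$ in the commutant $\Oc_o$, and pin down $\lambda$ from the Gram conditions. You also correctly diagnosed the misprint: the second case concerns $6$ antipodal pairs (it is applied to $\Delta(\beta)\cong\Delta_\ir$), not $15$. But two steps of your plan have genuine gaps. First, your enumeration of candidates is incomplete: once $\Delta=\lambda\Delta_c$, the scalar $\lambda$ ranges over all of $\Oc_o$ (a priori over $K$), not merely over the unit multiples $\pm X^k$ that your ``short, mechanical check'' runs through; e.g.\ $\lambda=2$ or $\lambda=2X-1$ are candidates your list never tests. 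You gesture at the fact that bounded $s$-values against $\Delta_c$ leave only finitely many possibilities, but you never execute that reduction, and the finiteness is exactly where the content lies. The paper does it in two lines: unimodularity of $s$ on $E_o$ (plus the fact that $\Delta_c$ generates $E_o$) forces $\Delta\subset E_o$, hence $\lambda\in\Oc_o$; writing $\lambda=p+qX^{-3}$ on the $\Zb$-basis $\{1,X^{-3}\}=\{1,2X-3\}$ and computing $s(\lambda(e+e_0+e_1),e+e_0+e_1)=3p$ and $s(\lambda(e+e_0+e_1),e+e_1+e_2)=2p+q$, both constrained to lie in $\{-1,0,1\}$, gives $(p,q)=(0,\pm1)$, i.e.\ $\lambda=\pm X^{-3}$. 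Without some such linear-algebra step your argument does not close.

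Second, the $\Delta_\ir$ case is not argued at all: after flagging the cardinality issue you simply assert that ``the only orbits with so few non-orthogonal pairs turn out to be $\Delta_\ir$ or $X^{-3}\Delta_\ir$.'' This is precisely the claim to be proved, and it is where the extra hypothesis enters. The paper's route: with $\lambda=p+qX^{-3}$ as above, the condition $s(\lambda e_i,\gamma)\in\{-1,0,1\}$ for $\gamma\in\Delta_c$ gives $|p|\le1$ and $|q|\le1$ (read off the coordinates of $\lambda(e+e_0+e_1)=p(e+e_0+e_1)+q(e_2-e_3+e_4)$ via self-adjointness of $X$); then if $p$ and $q$ were both nonzero, $s(\lambda e_i, e+e_0+e_1)\neq 0$ for every $i\in\Zb/5$, so at least five antipodal pairs of $\Delta$ would fail to be perpendicular to $e+e_0+e_1$, contradicting the ``at most three'' hypothesis. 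Hence $(p,q)$ is $(\pm1,0)$ or $(0,\pm1)$, giving $\Delta_\ir$ or $X^{-3}\Delta_\ir$. You should supply this (or an equivalent) argument rather than defer it to a recheck of the statement.
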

\begin{proof}
Since $\Delta_c$ generates $E_o$ and $s$ is unimodular on $E_o$, our assumption implies that $\Delta\subset E_o$.
The fact that $\Delta_c$ resp.\  $\Delta_\ir$ generates $E_o$ also implies that $\Delta$ is the image of $\Delta_c$ resp.\  $\Delta_\ir$ under a $\Af_5$-equivariant homomorphism $E_o\to E_o$,  so is given by a scalar $\lambda\in \Oc_o$. This scalar is unique up to sign, for any element of $\Oc_o^\times$ which leaves $\Delta$ invariant will be of finite order and hence equal to $\pm 1$.

A straightforward computation shows that $X^{-3}=2X-3$ sends the vector $e+e_0+e_1$ to $e_2-e_3+e_4$.  Noting that  $\{1, 2X-3\}$ is a $\Zb$-basis of
$\Oc_o$, we write $\lambda$ out on this basis: $\lambda=p+ qX^{-3}=p+q(2X-3)$ with $p, q\in\Z$.  

Assume now that $\Delta=\lambda\Delta_c$. We have  $e+e_1+e_2\in\Delta_c$ and so by our assumption
\begin{align*}
s(\lambda (e+e_0+e_1), e+e_0+e_1) &=s(p(e+e_0+e_1)+q(e_2-e_3+e_4), e+e_0+e_1) = 3p,\\
s(\lambda (e+e_0+e_1), e+e_1+e_2) &=s(p(e+e_0+e_1)+q(e_2-e_3+e_4), e+e_1+e_2) =2p+q
\end{align*}
both lie in $\{-1,0,1\}$. It follows that $p=0$ and $q=\pm 1$, so that  $\lambda=\pm X^{-3}$.

If $\Delta=\lambda\Delta_\ir$, then $\lambda (e+e_0+e_1)=p(e+e_0+e_1)+q(e_2-e_3+e_4)$
has  $s$-inner product of absolute value $\le 1$ with the elements of $\Delta_c$. This means that $|p|\le 1$ and $|q|\le 1$. 
It remains to show that $(p,q)$ is either  $(\pm 1,0)$ or $(0,\pm 1)$. If 
$(p,q)\not=(0,0)$, then for every $i\in\Zb/5$, 
\[
s(\lambda e_i, e+e_0+e_1)= s(e_i, \lambda (e+e_0+e_1))= s(e_i, p(e_2-e_3+e_4)+ q(e+e_0+e_1)\not=0, 
\]
so that at least  $5$ antipodal pairs in $\Delta$ would be not orthogonal to $e+e_0+e_1$. This we excluded. \end{proof}

\begin{proof}[Proof of Proposition \ref{prop:symplecticbasis}]
%The sublattice spanned by $\Delta (\alpha)$ resp.\  $\Delta (\alpha')$
%is clearly  isotropic. We will see that these two lattices define a Lagrangian decomposition of  $H_1(C_o)$. 
Recall that  $e+e_0+e_1$ is stabilized by  $\sigma_3$ and that its $\Af_5$-orbit generates $E_o$. Since $H_1(C_o; \Q)$ is isotypical
as a $\Q\Af_5$-module (of type $E_\Q$), it then follows that there exists a $v\in V_o(C_o)_\Qb= \Hom_{\Z \Af_5}(E_o, H_1(C_o; \Qb))$ such that
$v(e+e_0+e_1)=\vec\alpha$. Then $v$ will take its values in $H_1(C_o)$ and so will lie in  $V_o(C_o)$.
By Lemma \ref{lemma:goodchoice}, the vector  $\vec\alpha'\in H_1(C_o)$ is stabilized by $\sigma_5^{-1}\sigma_3\sigma_5$. Since $\sigma_5^{-1}\sigma_3\sigma_5$
stabilizes $\sigma_5^{-1}(e+e_0+e_1)$, it follows that  there exists a $v'\in V_o(C_o)$ such that $v'\sigma_5^{-1}(e+e_0+e_1)=\vec\alpha'$. 

Consider the pairing $(x, y)\in E_o\times E_o\mapsto  v(x)\cdot v'(y)$. This pairing is $\Af_5$-invariant.
We saw in Lemma \ref{lemma:intersection1} that it  takes on $\Delta_c\times\Delta_c$ values only in $\{-1,0, 1\}$. It then follows from 
Lemma \ref{lemma:n=-1}  that $v(x)\cdot v'(y)= \pm s(x,X^{-3}y)$ when $x,y \in \Delta_c$. In order to determine the sign, we note that  
$v(e+e_0+e_1)\cdot v'\sigma_5^{-1}(e+e_0+e_1)=\vec\alpha\cdot\vec\alpha'=1$ and  
\begin{multline*}
s(e+e_0+e_1, X^{-3}\sigma_5^{-1}(e+e_0+e_1))=\\=s(X^{-3}(e+e_0+e_1),\sigma_5^{-1}(e+e_0+e_1))=
s(e_2-e_3+e_4, e+e_4+e_0)=1
\end{multline*}
This shows that $v(x)\cdot v'(y)= s(x,X^{-3}y)$. Equivalently: 
$A(v, X^3v')=1$.  So $(v, X^{3}v')$ is a symplectic basis of $V_o(C_o)$ in the sense that  the $\Af_5$-equivariant map $E_o\oplus E_o\to H(C_o; \Zb)$ defined by $(v, X^3v')$ pulls back the intersection pairing on  $H(C_o; \Zb)$ to the symplectic pairing on $E_o\oplus E_o$ defined by $s$. Since the latter is unimodular (since $s$ is), this also implies that  the map $E_o\oplus E_o\to H(C_o; \Zb)$ is an isomorphism of symplectic modules.
  
 According to Lemma
\ref{lemma:generators},  $\Delta_c$ is $\Sf_5$-invariant. On the other hand, $\tau_4\in \Sf_5\ssm\Af_5$ takes 
$\alpha$ to $\alpha'$, and so it follows that
that $\iota (v)=\tau_4 v\tau_4^{-1}$ is an element of $V_o(C_o)$ that takes $\Delta_c$ to $\Delta (\alpha')$. But $v'$ also has this property. It is the only element of  
$V_o(C_o)$ with this property up to sign, for any two elements of $V_o(C_o)$ for which the images of $\Delta_c$ coincide will differ by a factor in $\Oc_o^\times$ which is of finite order, in other words, will differ by a sign. As we are only interested in the effect of conjugation with $\iota$,  this sign is unimportant for us  and we leave it as an unknown: we have  $\iota(v)=\pm v'$. 
The map $\iota$ is an antilinear involution of $V_o(C_o)$ and so $v=\iota^2(v)=\iota (\pm v')$, which shows that
$\iota(v')= \pm v$. 

Finally,  since the $\Af_5$-stabilizers of  $e$  and $\vec\beta$ are  generated by $\sigma_5$, there exists a 
$u\in V_o(C_o)$ such that $u(e)=\vec\beta$.
The set $\Delta (\beta)$ consists of $6$ antipodal pairs in $H_1(C_o)$. Lemma \ref{lemma:intersection1}  tells us that the hypotheses
of Lemma \ref{lemma:n=-1} are fulfilled (second case): $(x, y)\in \Delta_\ir\times \Delta_c\mapsto u(x)\cdot v(y)$  takes its values 
in $\{-1,0,1\}$ with for a given $y\in \Delta_c$, a nonzero value occurring for at most three antipodal pairs in $\Delta_\ir$.  
 It then follows that either $u(x)\cdot v(y)=\pm s(x,y)$ or  $u(x)\cdot v(y)=\pm s(x,X^{-3}y)$.
We have $u(e)\cdot v(e+e_0+e_1)=\vec\beta\cdot\vec\alpha=1$, $s(e,e+e_0+e_1)=1$ and $s(e, X^{-3}(e+e_0+e_1))=s(e,e_2-e_3+e_4)=0$.
It follows that $u(x)\cdot v(y)=s(x,y)$, in other words, $A(u,v)=1$. 

The same argument works  if we replace $v$ by  $v'$:  $u(x)\cdot v'(y)=\pm s(x,y)$ or  $u(x)\cdot v'(y)=\pm s(x,X^{-3}y)$.
Since we  have $u(e)\cdot v'(e+e_4+e_0)=\vec\beta\cdot\vec\alpha'=1$, $s(e,\sigma_5^{-1}(e+e_0+e_1))=s(\sigma_5(e),e+e_0+e_1)=1$ and 
\begin{multline*}
s(e, X^{-3}\sigma_5^{-1}(e+e_0+e_1))=s(e, \sigma_5^{-1}X^{-3}(e+e_0+e_1))=\\=
s(\sigma_5(e),e_2-e_3+e_4)= s(e,e_2-e_3+e_4)=0,
\end{multline*}
it follows that $u(x)\cdot v'(y)=s(x,y)$, so that $A(u,v')=1$. Since $A(v,v')=X^{-3}$, this proves that $u=X^3(v-v')$ and so $\Delta(\beta)=(v-v')(X^3\Delta_\ir)$. 
\end{proof}

%we also have $A(-X^3(v+v'), v)=-s(v+v', v)=1$ and $A(-X^3(v+v'), v')=-s(v+v', v')=-1$. T

%To see this, we observe that the complement of $\overline\alpha$ in $\overline P$ is a disk with at its center the point  of order $6$ as its only orbifold point. Hence the preimage of $\overline\alpha$ in $C_o$ has as its complement a disjoint union of open disks (interiors of solid hexagons, to be precise), so that this imposes a cellular structure on $C_o$. Our assertion follows, as the first homology of a cell complex is generated by that of its  $1$-skeleton.

\begin{remark}\label{rem:symplecticbasis}
The second property implies that if an endomorphism $T$ of $V_o(C_o)$ has on the basis $(v, v')$ the matrix 
$\left( \begin{smallmatrix}a &b \\ c & d\end{smallmatrix}\right)$, then
$\iota T\iota$ has the matrix $\left( \begin{smallmatrix}d' &c' \\ b' & a'\end{smallmatrix}\right)$.
\end{remark}

\begin{remark}\label{rem:loops}
We can realize $\Delta(\alpha)$, $\Delta(\beta)$ and their $\iota$-transforms as sets of vanishing cycles as follows.
Consider in  $P$ the union of the four arcs  $\alpha, \alpha', \beta, \beta'$. So this is the union of
the unit circle and its center line  $[-1,1]$. It contains all the order $2$-orbifold points. By moving these points 
we deform $C_o$ as  a $\Af_5$-curve. In the present case, each of the four arcs determines a simple
way to do this and  gives a path in $\Bc$  from $c_o$ to one of the points  $c_c, c'_c, c_\ir, c'_\ir$.
For the arc $\alpha$, we move the central point $0$ along the ray $[0,1]$ to its end point $1$ and for the arc
$\beta$ we move $1$ along the semi-circle  to its end point $-1$ (we fix the other orbifold points)
and we do the obvious analogue for $\alpha'$ and $\beta'$. 
The arcs $\gamma_\alpha$, $\gamma_\beta$,  $\gamma_{\alpha'}$,   $\gamma_{\beta'}$ in $\Bc$ thus obtained have 
the property that they do not meet
away from $c_o$ and avoid discriminant points except at the end point. We assume the labeling such that the end 
points are  $c_c$, $c'_c$, $c_\ir$ and $c'_\ir$ 
respectively. Notice that $\iota$ exchanges  the items of $\gamma_\alpha, \gamma_{\alpha'}$ and  
$\gamma_\beta, \gamma_{\beta'}$ 

To the arc $\gamma_\alpha$ from $c_o$ to $c_c$ there is associated element   $[\gamma_\alpha]\in\pi_i(\Bc^\circ, c_o)$ represented  by 
a positive simple loop based at $c_o$ around the end point $c_c$ of $\gamma_\alpha$ and similarly for the other arcs.
The four elements $[\gamma_\alpha], [\gamma_\beta], [\gamma_{\alpha'}], [\gamma_{\beta'}]$ generate $\pi_i(\Bc^\circ, c_o)$ freely and 
$[\gamma_\alpha][\gamma_\beta][\gamma_{\alpha'}][\gamma_{\beta'}]$ (read from right to left) represents a negative simple loop around $c_\infty$.
\end{remark}

\subsection{The local system of isogeny modules of the Wiman-Edge pencil}\label{subsect:locsystem} 
The $V_o(C_t)$ define a local system
$\Vb_o$  of symplectic $\Oc_o$-modules of rank $2$ over  $\Bc^\circ$. The involution $\iota$ of $\Bc^\circ$ (that is given by 
precomposition with a non-inner automorphism of  $\Af_5$) lifts to an isomorphism between the pull-back $\iota^*\Vb_o$ and the twist  of
$\Vb_o$  as a $\Oc_o$-module by the Galois involution. In other words, the involution $\iota$ lifts in an anti-linear manner to $\Vb_o$. 
As proved in Theorem 1.1 (see also Remark 2.5) of \cite{FL}, there is an identification $\Bc^\circ=\Gamma\bs \Hf$ with $\Gamma\subset \PSL_2(\Z)$ being torsion 
free and so $\Vb_o$ pulls back to $\Hf$ as a trivial symplectic local system  with $\G$-action.   The  basis $(v, v')$ of $V_o(C_o)$ constructed in 
Proposition \ref{prop:symplecticbasis} extends to one of the pull-back of  $\Vb_o$ to $\Hf$ (so we use $c_o$ as our base point).
Now the  $\G$-action (and hence the monodromy of $\Vb_o$) is given by a  homomorphism  
\[
\rho: \Gamma\to \SL_2(\Oc_o) 
\]
that is compatible with the involutions named $\iota$ (it acts in $\SL_2(\Oc_o)$ as prescribed by Remark \ref{rem:symplecticbasis}. (So this 
yields a group homomorphism between the semi-direct products defined by these involutions; this can be understood as a monodromy representation of
a local system on the Deligne-Mumford stack $\Bc^\circ/\iota$.)
 
Our goal is to describe this monodromy  representation. We first do this  locally.

\subsubsection*{The cusps of $\SL_2(\Oc_o)$}
We observed that $\Oc/2\Oc\cong\Fb_4$ and that the Galois involution of  $\Oc$ induces in $\Oc/2\Oc$  the  Frobenius map  (so its fixed point set is the prime field $\Oc_o/2\Oc=\Fb_2$).  Reduction modulo $2$ defines a homomorphism $\SL_2(\Oc)\to \SL_2(\Fb_4)$.
It is surjective and the permutation representation of $\SL_2(\Fb_4)$ on $\Pb^1(\Fb_4)$ identifies  $\SL_2(\Fb_4)$ with 
the alternating group $\Af_5$. (It is also known that the  full permutation group of $\Pb^1(\Fb_4)$ is the semi-direct product of 
$\SL_2(\Fb_4)$ and the Frobenius.) 

We shall write  $\SL_2(\Oc)[2]$ for the kernel of  $\SL_2(\Oc)\to \SL_2(\Fb_4)$. 
Since $\Pb^1(\Fb_4)$ has $5$ elements, $\SL_2(\Oc)[2]$  has as many cusps ($=\SL_2(\Oc)[2]$-orbits in $\Pb^1(K)$).   These are represented by 
$[1:0]$, $[0:1]$, $[1:1]$, $[X:1]$, $[1:X]$. Note that the involution $I : (x_0,x_1)\mapsto (x_1, x_0)$  exchanges $[1:0]$ and $[0:1]$ and $[X:1]$ and 
$[1:X]$, whereas the Frobenius only  exchanges $[X:1]$ and $[1:X]$ (for $[X^2:1]=[1: X^{-2}]=[1:-X+2]$ and $[1:X]$ define the same element of $\Pb^1(\Fb_4)$). 

It is  clear that
$\SL_2(\Oc_o)$ is the preimage of $\SL_2(\Fb_2)$, when regarded as a subgroup of $\SL_2(\Fb_4)$. The subgroup  $\SL_2(\Fb_2)\subset \SL_2(\Fb_4)$  has two orbits in 
$\Pb^1(\Fb_4)$, namely $\{[1:0],[1:1], [0:1]\}$  and $\{[X:1], [1:X]\}$,  and so  $\SL_2(\Oc_o)$  has only 2 cusps  (which we shall denote $\infty_0$ resp.\  $\infty_X$), 
both of which are invariant under the involution $I$. This has the following implication.

\begin{lemma}\label{lemma:dichotomy}
A rank one $\Oc_o$-submodule $L\subset \Oc_o^2$ which is primitive in the sense that $\Oc_o^2/L$ is torsion free,  is a $\SL_2(\Oc_o)$-transform of  either 
\begin{description}
\item[(type $\infty_0$)] the first summand of $\Oc_o^2$ (the associated $\SL_2(\Oc_o)$-cusp is $\infty_0$), or
\item[(type $\infty_X$)] the image of $a\in\Oc\mapsto (2a, 2Xa)\in \Oc_o^2$ (the associated $\SL_2(\Oc_o)$-cusp is $\infty_X$).
\end{description}
\end{lemma}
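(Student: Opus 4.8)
The plan is to classify primitive rank-one $\Oc_o$-submodules $L\subset\Oc_o^2$ by reducing the problem modulo $2$ and matching the two $\SL_2(\Oc_o)$-orbits against the two $\SL_2(\Fb_2)$-orbits in $\Pb^1(\Fb_4)$ described just above. First I would observe that over $K=\Q(\sqrt5)$ a rank-one $\Oc_o$-submodule of $\Oc_o^2$ spans a $K$-line, hence determines a point of $\Pb^1(K)$; conversely, given such a $K$-line $\ell$, the submodule $\ell\cap\Oc_o^2$ is the unique primitive $\Oc_o$-submodule lying over $\ell$. So the set of primitive $L$ is in bijection with $\Pb^1(K)$, and $\SL_2(\Oc_o)$ acts compatibly; thus I must show $\SL_2(\Oc_o)$ has exactly two orbits on $\Pb^1(K)$ and identify explicit representatives in the two stated forms.

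The main step is the orbit count. Here I would use the structure of $\Oc_o$: it is not a PID (it has index $2$ in the PID $\Oc$), so the naive "clear denominators, scale to a primitive vector, act transitively" argument fails, and this failure is exactly what produces the second cusp. The clean route is reduction mod $2$: since $2\Oc\subset\Oc_o$ and $\Oc_o/2\Oc=\Fb_2$, a primitive $L$ maps to a line in $\Oc_o^2/2\Oc_o^2$, but one must be careful — primitivity of $L$ over $\Oc_o$ does not force $L/2L$ to be a line, because $\Oc_o$ is not regular at the prime $2$. Instead I would pass to $\Oc$: the saturation $\widetilde L:=K\ell\cap\Oc^2$ is a primitive rank-one $\Oc$-submodule, hence free (class number one), and reduces mod $2$ to a line in $\Pb^1(\Fb_4)$. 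The pair $(\widetilde L\bmod 2,\;\text{whether }\widetilde L\subset\Oc_o^2)$ is the invariant: since $\widetilde L$ is free on a generator $(a_0,a_1)$, one has $L=\Oc_o\cdot(a_0,a_1)$ when this vector already lies in (and is primitive in) $\Oc_o^2$, i.e. when the reduction $[\bar a_0:\bar a_1]\in\Pb^1(\Fb_4)$ lies in the $\SL_2(\Fb_2)$-orbit $\{[1:0],[1:1],[0:1]\}$; otherwise $L=\widetilde L\cap\Oc_o^2=2\widetilde L+(\text{stuff})$ and a short computation shows $L$ is the image of $a\mapsto(2a,2Xa)$ up to $\SL_2(\Oc_o)$, the reduction landing in $\{[X:1],[1:X]\}$. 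The two $\SL_2(\Oc_o)$-orbits on $\Pb^1(K)$ are then exactly the preimages of these two $\SL_2(\Fb_2)$-orbits under the (surjective, by strong approximation / the fact that $\SL_2(\Oc_o)\to\SL_2(\Fb_2)$ is onto) reduction map $\Pb^1(K)\supset(\text{primitive lines})\to\Pb^1(\Fb_4)$; transitivity of $\SL_2(\Oc_o)$ on each fiber-union follows because $\SL_2(\Oc)$ acts transitively on the free primitive $\Oc$-lines and the mod-$2$ stabilizer data cuts this down to exactly two $\SL_2(\Oc_o)$-orbits.

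Concretely, for the two representatives I would just check: the first summand $\Oc_o\oplus 0$ is primitive and reduces to $[1:0]$, whose $\SL_2(\Fb_2)$-orbit is $\infty_0$; and the image $M$ of $a\in\Oc\mapsto(2a,2Xa)$ satisfies $M\subset\Oc_o^2$ (since $2a,2Xa\in\Oc_o$ for all $a\in\Oc$), is an $\Oc_o$-submodule of rank one, and $\Oc_o^2/M$ is torsion free — the last point because $2\Oc=2\Oc_o+2X\cdot\Z$ forces any $\Oc_o$-multiple of $(2a,2Xa)$ landing in $M$ back into $M$; and $M$ spans the $K$-line through $(1,X)$, whose mod-$2$ reduction $[1:X]$ lies in the orbit $\infty_X$.

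The hard part will be handling the non-PID behaviour of $\Oc_o$ at the prime $2$ carefully: one must resist the reflex that "primitive" means "part of a basis" and instead track the saturation in $\Oc^2$ together with a single $\Fb_4$-valued invariant, and then verify that the $\SL_2(\Oc_o)$-action is transitive on each of the two fibers-over-orbits rather than finer. I expect the cleanest way to nail that transitivity is to note $\SL_2(\Oc_o)[2]:=\ker(\SL_2(\Oc_o)\to\SL_2(\Fb_2))$ acts transitively on each fiber of the reduction map on primitive lines — which is the $\Oc_o$-analogue of the standard fact for $\SL_2(\Z)$ and its principal congruence subgroups — together with the already-noted surjectivity of $\SL_2(\Oc_o)\to\SL_2(\Fb_2)$ and the description of its two orbits on $\Pb^1(\Fb_4)$. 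Everything else is the short explicit verification in the previous paragraph.
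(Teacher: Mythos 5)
Your proposal is correct and follows essentially the same route as the paper: pass to the $K$-span to get a point of $\Pb^1(K)$, use the mod-$2$ reduction $\SL_2(\Oc)\to\SL_2(\Fb_4)$ and the two $\SL_2(\Fb_2)$-orbits on $\Pb^1(\Fb_4)$ to see that $\SL_2(\Oc_o)$ has exactly the two orbits $[1:0]$ and $[1:X]$ on $\Pb^1(K)$, and then compute the primitive lattice over each representative (the key point being that $u,Xu\in\Oc_o$ forces $u\in2\Oc$, and conversely). The paper simply cites its preceding cusp discussion for the orbit count where you re-derive it, but the substance is identical.
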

\begin{proof}  By regarding  $\Q\otimes_\Z L=K\otimes_{\Oc_o} L$ as a $K$-linear subspace of $K^2$ of dimension one, we get an element of $\Pb^1(K)$. Since
the  $\SL_2(\Oc_o)$-orbits in $\Pb^1(K)$ are represented by $[1:0]$ and $[1:X]$, we can assume that either $\Q\otimes_\Z L$ is the first summand of  $K^2$ or the graph of $X$. In the first case, it is clear that  $L$ is the first summand of $\Oc_o^2$. 
In the second case, we note that if $u\in \Oc_o$ is such that $Xu\in \Oc_o$, then by writing $u$ as an integral linear combination of $1$ and $X$, we find that $u\in 2\Oc$. Conversely, every element of $2\Oc$ has that property. 
\end{proof}

\subsubsection*{The cusps of the principal level 2 subgroup of $\SL_2(\Oc_o)$}
We next consider the mod 2 reduction of $\Oc_o$ and  $\SL_2(\Oc_o)$.  A $\Zb$-basis of $\Oc_o$ consists of  $1$ and $Y:=2X$.  Since
$Y^2=4X+4=2Y+4\in 2\Oc_o$, it follows that $\Oc_o/2\Oc_o\cong \Fb_2[Y]/(Y^2)$ as a ring. Its group of units is $\{1, 1+Y\}$. 
So a nonzero submodule of $(\Oc_o/2\Oc_o)^2$ generated by a single element  are the ones generated by 
$(1,0), (1, Y),  (1,1),  (1+Y,1), (Y,Y), (Y,1),  (0,1)$. These seven submodules are pairwise distinct. 
Note that the involution $(x_0,x_1)\mapsto (x_1, x_0)$  exchanges the items of $\{[1:0],[0:1]\}$ and $\{[Y:1], [1:Y]\}$ and fixes the other three $[1:1]$,   $[1+Y:1]$, $[Y:Y]$.  The reduction homomorphism $\SL_2(\Oc_o)\to \SL_2(\Oc_o/2\Oc_o)$ is onto.

\section{Arithmeticity of the monodromy}
\label{sect:monodromy2}

In \S~\ref{sect:monodromy1} we proved that the monodromy representation of the Wiman-Edge pencil 
has target $\SL_2(\Oc_o)$, giving a representation $\rho:\Gamma\to\SL_2(\Oc_o)$.  The goal of this section is to prove Theorem~\ref{theorem:arithmeticity}, that the image of $\rho$ has finite index in $\SL_2(\Oc_o)$.  The first step in doing this is to compute the image under $\rho$ of the generators of $\Gamma$.  As explained above, $\Gamma$ is generated by loops around the cusps, i.e.\ the degenerations of the pencil.  We start by applying classical Picard-Lefschetz theory to compute the conjugacy classes of these local monodromies.   Computing them on the nose requires more work, which we do later in the section.

%\begin{proposition}\label{prop:}
%The image of $\rho : \Gamma\to \SL_2(\Oc)$ lies in $\SL_2(\Oc)[2]$.
%\end{proposition}
%\begin{proof}

\subsection{The monodromy around a cusp} In order to gain a better understanding of what $\rho$ is like,  let us recall that $\Gamma$ is a free group and is generated by simple loops around the   punctures of the $5$-punctured sphere $\Bc^\circ$. We therefore concentrate on  the monodromy around each puncture. 

Assume that $C_s$ is singular, and choose a disk-like neighborhood $U$ of $s$ in $\{s\}\cup\Bc^\circ$ (so that $C_s\subset  \Cs_U$ is a homotopy equivalence). Choose also $\eta\in U\ssm\{s\}$
and write $C$ for $C_\eta$. Then
the natural map $H_1(C)\to H_1(\Cs_U)\cong H_1(C_s)$ is onto, and the kernel is a $\Af_5$-invariant isotropic  sublattice. If $G_s$ denotes the  dual intersection graph of $C_s$ then there is a natural homotopy class of maps $C_s\to G_s$. Since the irreducible components
of the normalization of $C_s$ are all of genus zero, this homotopy class induces an isomorphism on the first (co)homology. We note that in each case 
$H_1(G_s)$ is free of rank $6$, and so the kernel of  $H_1(C)\to H_1(C_s)\cong H_1(G_s)$ is in fact a primitive Lagrangian sublattice. The intersection pairing identifies this kernel with the dual $H^1(G_s)$ of $H_1(C_s)$,   so that we have an exact sequence of $\Z \Af_5$-modules
\begin{equation*} \tag{\text{vanishing sequence}}
0\to H^1(G_s)\to H_1(C)\to  H_1(G_s)\to 0\quad ,
\end{equation*}
where $H^1(G_s)\to H_1(C)$ is the composition of 
$H^1(G_s)\to H^1(C)$ with the isomorphism $H^1(C)\cong H_1(C)$ defined by the intersection pairing.  The sequence is preserved by the monodromy operator of the family $\Cs_{U\ssm \{s\}}$, with the monodromy acting nontrivially only on the middle term. 
Its difference with the identity (which is also called the \emph{variation of the monodromy}) is therefore given by a homomorphism
\[\tag{\text{variation of the local monodromy}}
\nu_s: H_1(G_s)\to H^1(G_s) 
\] 
of $\Z\Af_5$-modules. This homomorphism can be read off from $G_s$.
To see this,  we recall that the  monodromy is given by the classical Picard Lefschetz formula. Each node of $C_s$ determines a vanishing circle on $C$ up to isotopy, and hence, after orienting it, an element of  $H_1(C)$ up to sign (a \emph{vanishing cycle}).  We denote the  collection of vanishing cycles by $\Delta_C\subset H_1(C)$. It is clear from the preceding that  $\Delta_C$ lies in  the image of $H^1(G_s)\to H^1(C)\cong H_1(C)$.  In fact, $\Delta_C$ generates that image. The monodromy around $C_s$  is a multi Dehn twist which  acts on $H_1(C)$ as:
\[
\textstyle T_s: x\in H_1(C)\mapsto x+\sum_{\delta\in \Delta_C/\{ \pm 1\}}(\delta\cdot x)\delta\in H_1(C)
\]
(the sum makes sense because replacing $\delta$ by $-\delta$ does not alter $(\delta\cdot x)\delta$).  We now see that  if we identify 
$H_1(G_s)$ with the dual of $H^1(G_s)$, then $\nu_s\in \Hom ( H_1(G_s),  H^1(G_s))\cong H^1(G_s)\otimes H^1(G_s)$ is represented by 
the symmetric tensor
\[
\textstyle t_s:=\sum_{\delta\in \Delta_C/\{ \pm 1\}} \delta\otimes\delta\in H^1(G_s)\otimes H^1(G_s).
\]
We shall refer to $t_s$ as the \emph{variation tensor}. It is the sum over the squares of the edges of $G_s$ and so canonically associated with $G_s$. It is of course also $\Af_5$-invariant. By construction $T_s(x)-x$ is obtained by contracting $t_s$ on the right with the image $[x]$ of $x$ in $H_1(G_s)$ 
and regard the resulting element of  $H^1(G_s)$ as sitting in $H_1(C)$ via Poincar\'e duality. 
In order to determine $t_s$  in each case, we first note  that the inverse form of $s$ (i.e., the quadratic form on $E_o^\vee$) is the tensor  
\[
\textstyle \check{s}=e\otimes  e+\sum_{i\in \Z/(5)} e_i\otimes e_i\in E_o\otimes E_o.
\]
 
\begin{proposition}[{\bf Variation tensors of the singular fibers}]
\label{prop:cuspmonodromy}
For a singular fiber $C_s$ of the Wiman-Edge pencil, its homology as a $\Af_5$-module and its variation tensor are as follows:

\begin{enumerate}
\item When $C_s$ is irreducible, there exists an isomorphism $v_s: E_o\cong H^1(G_s)$ of $\Z\Af_5$-modules (so the associated cusp is $\infty_0$) such that  $t_s=v_s\otimes v_s(\check{s})$. 

\item When $C_s$ consists of five conics, there exists an isomorphism $v_s: E_o\cong H^1(G_s)$ (so the associated cusp is $\infty_0$) such that  $t_s=(3+4X)v_s\otimes v_s(\check{s})$.

\item When $C_s$ consists of ten lines, there exists an isomorphism $v_s: E\cong H^1(G_s)$  (so the associated cusp is $\infty_X$) such that  $t_s=(4+2X)v\otimes v(\check{s})$.
\end{enumerate}
 \end{proposition}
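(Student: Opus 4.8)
The plan is to treat the five singular fibres one at a time, reading each variation tensor off from the combinatorics of its dual graph via the Picard--Lefschetz recipe recalled above. Two observations organize the cases. First, for each singular fibre $s$ the lattice $H^1(G_s)$ has rank $6$ and $H^1(G_s)\otimes\Q$ is a rank-$6$ $\Q\Af_5$-submodule of $H_1(C_o;\Q)\cong E_\Q^2$; since $\End_{\Q\Af_5}(E_\Q)=K$ and $E_\Q^2$ is isotypical, every such submodule is isomorphic to $E_\Q$, so $H^1(G_s)\otimes\Q\cong E_\Q$. Second, using $E_\Q\cong E_\Q^{*}$ (self-duality under $s$) and $\End_{\Q\Af_5}(E_\Q)=K$, the invariant space $(E_\Q\otimes_\Q E_\Q)^{\Af_5}$ is the $K$-line through $\check s$; in particular any $\Af_5$-invariant element of $E_o\otimes E_o$ is a $K$-multiple of $\check s$. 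Also, since $H^1(G_s)$ is a primitive Lagrangian in $H_1(C_o)$, the $\Oc_o$-scalar action on $H_1(C_o)$ coming from Proposition~\ref{prop:symplecticbasis}(i) preserves $H^1(G_s)$.

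First I would identify, for the four fibres $C_\ir,C'_\ir,C_c,C'_c$, an isomorphism $v_s\colon E_o\xrightarrow{\ \sim\ }H^1(G_s)$ of $\Z\Af_5$-modules carrying a distinguished orbit onto the set of vanishing cycles. By Remark~\ref{rem:loops}, the vanishing cycles of the two five-conic fibres are the $\Af_5$-orbits $\Delta(\alpha)$ and $\Delta(\alpha')$, and those of the two irreducible fibres are $\Delta(\beta)$ and $\Delta(\beta')$. For $C_c$: Proposition~\ref{prop:symplecticbasis}(iii) gives $v(\Delta_c)=\Delta(\alpha)$; the map $v\in V_o(C_o)$ is injective (being nonzero, since $E_\Q$ is irreducible), it is $\Oc_o$-linear, and since $\Delta_c$ generates $E_o$ while $\Delta(\alpha)$ generates the primitive Lagrangian $H^1(G_s)$, its corestriction $v_s:=v$ is an isomorphism with $v_s(\Delta_c)$ the vanishing-cycle set. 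For $C'_c$ take $v_s:=v'$; for $C_\ir$ (resp.\ $C'_\ir$) take $v_s:=X^3(v-v')\in V_o(C_o)$, which is injective and sends $\Delta_\ir$ to $(v-v')(X^3\Delta_\ir)=\Delta(\beta)$, hence is such an isomorphism. For the ten-line fibre $C_\infty$ one uses instead that $\Af_5$ acts transitively on the $15$ nodes, so the vanishing-cycle set is a single $\Af_5$-orbit of $15$ antipodal pairs generating $H^1(G_s)$. Under any $\Q\Af_5$-isomorphism $H^1(G_s)\otimes\Q\cong E_\Q$ this orbit becomes a $15$-pair orbit there; its determining vector lies in the fixed space of an order-two element, which is one-dimensional over $K$, so up to scaling by $K^{\times}$ (and conjugating by $\Af_5$) the orbit is $\Delta_\infty$, which spans $E$; hence $H^1(G_s)\cong E$, and with $V_o(E)=\Oc$ and Lemma~\ref{lemma:dichotomy} the associated cusp is $\infty_X$. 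One then picks an isomorphism $v_s\colon E\xrightarrow{\ \sim\ }H^1(G_s)$ carrying $\Delta_\infty$ exactly onto the vanishing-cycle set: this is possible because all signed $15$-pair orbits are $\Af_5$-equivariantly isomorphic, and having matched a reference vanishing cycle to a reference element of $\Delta_\infty$, the two $\Af_5$-equivariant surjections from the free $\Z$-module on $\Delta_\infty/\{\pm1\}$ (with its sign-twisted $\Af_5$-action) onto $E$ and onto $H^1(G_s)$ agree on all $\Af_5$-translates of that element, hence coincide. (The same argument shows the maps constructed above are genuine lattice isomorphisms, not merely finite-index inclusions.)

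Next I would compute $t_s=\sum_{\delta\in\Delta_C/\{\pm1\}}\delta\otimes\delta=(v_s\otimes v_s)(\theta_\bullet)$, where $\theta_\bullet=\sum_{\delta\in\Delta_\bullet/\{\pm1\}}\delta\otimes\delta$ and $\Delta_\bullet$ is $\Delta_\ir$, $\Delta_c$, or $\Delta_\infty$ according to the case. Being $\Af_5$-invariant, $\theta_\bullet=c_\bullet\check s$ for a unique $c_\bullet\in K$, which is integral. For $\Delta_\ir/\{\pm1\}=\{e,e_0,\dots,e_4\}$ one has $\theta_\ir=e\otimes e+\sum_i e_i\otimes e_i=\check s$, so $c_\ir=1$. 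For $\Delta_c$ and $\Delta_\infty$ one reads $c_\bullet$ off by equating, on the two sides of $\theta_\bullet=c_\bullet\check s$, the coefficients of $e\otimes e$ and of $e\otimes e_0$, using the explicit vectors of Lemma~\ref{lemma:generators} together with $X(e)=\varepsilon$, $X(e_i)=\varepsilon-(e_{i+2}+e_{i-2})$; the short computation gives $c_c=3+4X$ and $c_\infty=4+2X$. Since $v_s$ intertwines the relevant scalar actions, $t_s=c_\bullet\,(v_s\otimes v_s)(\check s)$, which are the three asserted formulas; and in the first two cases $H^1(G_s)\cong E_o$, so $V_o(E_o)=\Oc_o$ and Lemma~\ref{lemma:dichotomy} give the cusp $\infty_0$.

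I expect the ten-line fibre to be the main obstacle: it is the one case not supplied by Proposition~\ref{prop:symplecticbasis}, so the $\Af_5$-action on its $15$ vanishing cycles — in particular the transitivity on the $15$ nodes and the signs of the order-two elements — must be extracted from the explicit Petersen incidence of the ten lines (cf.\ \cite{DFL,FL}), and this is also where the less obvious integral form $E$, and hence the cusp $\infty_X$, first appears. A secondary point is making sure the maps built from orbit bijections are isomorphisms of lattices; the reference-element argument above settles this. The remaining ingredients — the rational module identifications, the two coefficient computations, and the $V_o$ bookkeeping for the cusps — are routine.
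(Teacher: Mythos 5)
Your proposal is correct in substance, and for four of the five fibers it takes a genuinely different route from the paper. The paper's own proof never invokes Proposition~\ref{prop:symplecticbasis} or the geodesic picture: it works entirely with the dual graphs, building for each $G_s$ an explicit $\Z\Af_5$-surjection $Z^1(G_s)\to E_o$ (resp.\ $E$) -- identifying the oriented edges of $K_5$ with the $3$-cycles of $\Af_5$ and those of the Petersen graph with the orbit of $e+e_0$ -- and then showing the kernel is exactly $B^1(G_s)$ by a character computation (irreducibility of the reflection representation for $K_5$; the $1\oplus 4\oplus 5$ decomposition of the vertex module for Petersen). You instead import the identifications $v(\Delta_c)=\Delta(\alpha)$, $v'(\Delta_c)=\Delta(\alpha')$, $(v-v')(X^3\Delta_\ir)=\Delta(\beta)$ from Proposition~\ref{prop:symplecticbasis}(iii) together with the assertion of Remark~\ref{rem:loops} that these orbits \emph{are} the vanishing-cycle sets. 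This is not circular (both inputs precede the proposition and are established independently), and it buys you the actual monodromy matrices of Corollary~\ref{cor:monodromies} essentially for free; the price is that the geometric content of Remark~\ref{rem:loops} (which arcs degenerate to which cusps, i.e.\ that the $\Af_5$-orbit of $\alpha$ is a $K_5$-configuration and that of $\beta$ a $6$-circle configuration) is doing real work, whereas the paper's proof needs only the abstract dual graphs. The tensor computations ($c_\ir=1$, $c_c=3+4X$, $c_\infty=4+2X$ read off from the coefficient of $\cdot\otimes e$) coincide with the paper's, as does your $V_o$/Lemma~\ref{lemma:dichotomy} bookkeeping for the two cusps.

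Two steps deserve tightening. For $C'_\ir$ the map $X^3(v-v')$ does not carry $\Delta_\ir$ onto the vanishing cycles of \emph{that} fiber; one must apply $\iota$, and since $\Sf_5$ does not preserve the orbit of $e$ (it is taken to the orbit of $X^{\pm3}e$), the correct $v_s$ is a unit multiple of $\iota(X^3(v-v'))$ -- harmless, because rescaling by a unit $\lambda$ replaces $\check s$ by $\lambda^2\check s$ and can be absorbed into $v_s$, but it should be said. In the Petersen case, "the two surjections coincide" really means they have the same kernel; this follows because both kernels are saturated (the targets $E$ and $H^1(G_s)$ are torsion free) and have the same rational span by your $K$-line argument, and similarly the transitivity of $\Af_5$ on the $30$ \emph{oriented} vanishing cycles should be noted (it follows since the fixed space of a Klein four-group in $E_\Q$ is zero, so the stabilizer of an oriented cycle cannot have order $4$). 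With these points filled in, the argument is complete.
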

 
\begin{proof}[Proof when $C_s$ is irreducible]
In this case $G:=G_s$ has one vertex with $6$ loops attached.  Choose an orientation of each loop 
of $G$. This  selects a vanishing cycle from each of our six antipodal pairs and the associated   classes in $H^1(G)$ make up a basis and 
the variation of the monodromy assigns to the oriented loop the associated vanishing cycle.
An $\Af_5$-isomorphism $v: E_0\cong  H^1(G)\subset H_1(C)$ is defined by assigning to $e$ one such vanishing cycle. 
It is clear that then $t_s$ is as asserted.
\end{proof}

\begin{proof}[Proof when $C_s$ is the union of  5 conics]
In this case $G_s$ is the $K_5$ graph (which has  $\Sf_5$ as its automorphism group).
It has  $20$  oriented edges  and   $\Af_5$ acts transitively on this set. We have in fact an  $\Af_5$-equivariant  bijection between the order $3$-elements in $\Af_5$ (the conjugacy class of $3$-cycles) and the oriented edges of $K_5$ by assigning to  the $3$-cycle $h=(\tau_1\tau_2 \tau_3)$ the oriented edge $[\tau_4,\tau_5]$ which is characterized by the property that  the permutation $i\mapsto \tau_i$ is even. Note that this assigns to the inverse element $(\tau_1\tau_3\tau_2)$ the oppositely oriented edge $[\tau_5,\tau_4]$. Thus the group $Z^1(K_5)$ of simplicial $1$-cochains on $K_5$ is identified with  the $\Z$-module of rank $10$ generated by the order $3$ elements $h\in\Af_5$, subject to the relations $h+h^{-1}=0$.  Its $\Z\Af_5$-module structure is defined by conjugation. The $1$-coboundary submodule  $B^1(K_5)$ has rank $4$ and  is spanned by the vertices subject to the relation that the sum of the vertices is zero. So $B^1(K_5)_\C$ is the reflection representation; it is irreducible as a $\Af_5$-module.

According to Lemma \ref{lemma:generators}, 
the $\Af_5$-orbit $\Delta_c$ of $e+e_{0}+e_1\in E_o$ consists of the 10 opposite pairs   $\{\pm(e+e_{i}+e_{i+1})\}_{i\in \Zb/(5)}, \pm(e_i -e_{i-2}-e_{i+2})\}_{i\in \Zb/(5)}$, spans $E_o$ over $\Zb$ and $\sigma_3\in\Af_5$ generates the $\Af_5$-stabilizer of $e+e_0+e_1$. 
A $\Z\Af_5$-module epimorphism $Z^1(K_5)\to E_o$ is then defined by demanding that it takes the oriented edge fixed by $\sigma_3$ to  $e+e_0+e_1$.  Since $E_\Q$ is irreducible as a $\Q\Af_5$-module, the image of $B^1(K_5)_\Q$ in $E_\Q$ is zero. So $B^1(K_5)$ is contained in the kernel of $Z^1(K_5)\to E_o$. But $B^1(K_5)$ is a primitive submodule of $Z^1(K_5)$ (for $H^1(K_5)$ is torsion free) and  has the same rank as this kernel. So it is equal to the kernel and we have an induced isomorphism $\Z\Af_5$-module isomorphism $v_s: E_o\to H^1(K_5)$. 

The associated quadratic tensor is the image under $v_s\otimes v_s$  of 
\[
\textstyle \sum_{i\in \Zb/(5)} (e +e_{i-1}+e_{i+1})^{\otimes 2} +\sum_{i\in \Zb/(5)} (e_i -e_{i-2}-e_{i+2})^{\otimes 2}.
\]
If we write this tensor as $u\otimes e+\sum_i u_i\otimes e_i$ we find that 
$u=5e +\sum_i e_{i-1}+ \sum_i e_{i+1}=3e+2(e+\sum_i e_i)=3e+4\varepsilon=(3+4X)(e)$. Likewise we find that $u_i=(3+4X)(e_i)$.
\end{proof}

\begin{proof}[Proof when $C_s$ is the union of  10 lines]
In this case $G_s$  is the Petersen graph $P$. Recall that the vertices of $P$ are indexed by the $2$-element subsets of 
$\Z /5$ (a set of size $10$) and that two such $2$-element subsets span an edge if and only they are disjoint (a set of size $15$). 
This makes it plain that $\Af_5$ acts transitively on its set of oriented edges (a set of $15$ antipodal pairs), so that the stabilizer of an oriented 
edge is of order $2$. The elements of order $2$ in $\Af_5$ make up a single conjugacy class, and a given order $2$ element preserves just  one 
edge in an orientation preserving manner. The centralizer of  that  element is a copy of $\Z/2\oplus\Z/2$; it preserves the edge, but may reverse 
orientation.  

Similarly,  $e+e_o\in E$ has as its $\Af_5$-stabilizer a subgroup of order $2$ (namely $\sigma_2$). It is clear that if $g\in \Af_5$
maps $e+e_o$ to $-(e+e_o)$, then it must centralize $\sigma_2$. It follows that there exists  a $\Af_5$-equivariant bijection of the set of oriented edges of $P$ onto the $\Af_5$-orbit of $e+e_o$ with the property that  orientation reversal corresponds to taking antipode.  In view of Lemma  
\ref{lemma:generators} this homomorphism is onto. 

Recall that $E_\C$ is an $\C \Af_5$-module isomorphic to $I_\C\oplus I'_\C$, where $I_\C$ and $I'_\C$ are
irreducible of degree  $3$. So to prove that $Z^1(P)\to E$ factors through an isomorphism $H^1(P)\to E$, it suffices to show that the coboundary space $B^1(P; \C)$, does not contain a copy of $I_\C$ or $I'_\C$. As we observed in the proof of Lemma 2.1 of part I \cite{DFL},  the vertex set of $P$ spans a $\C\Af_5$-module  which decomposes into a trivial representation and two irreducible representations of dimension $4$ and $5$. Hence the  latter two will span $B^1(P; \C)$. In particular, neither  $I_\C$ nor $I'_\C$ appears in $B^1(P; \C)$ and so we have an   isomorphism $H^1(P)\to E$. We denote by $v_s: E\cong H^1(P)$ its inverse.

The associated quadratic tensor is then the  image under $v_s\otimes v_s$  of 
\[
\textstyle \sum_{i\in \Zb/(5)} (e+e_i)^{\otimes 2} +\sum_{i\in \Zb/(5)} (e_i +e_{i+1})^{\otimes 2}  +\sum_{i\in \Zb/(5)} (e_{i-1}-e_{i+1})^{\otimes 2}.
\]
Proceeding as in the previous case, we write this as $u\otimes e +\sum_i u_i\otimes e_i$ and find that 
$u=5e+\sum_i e_i= 4e+2\varepsilon = (4+ 2X)(e)$ and likewise that $u_i=(4+ 2X)(e_i)$.
\end{proof}

\begin{remark}
The descriptions in Proposition~\ref{prop:cuspmonodromy} also tell us what the monodromy variations, or rather their   $\SL_2(\Oc_o)$-conjugacy classes, are in terms of $\Vb_o(\eta)$: as this endomorphism of $\Vb_o(\eta)$ is $\Oc_o$-linear, they are in the three cases  given by respectively the images of  $v\otimes v$,  $(3+4X)v\otimes v$, and $(4+2X)v\otimes v$ in $\Vb_o(\eta)\otimes_{\Oc_o}\Vb_o(\eta)$ (in the last case this element lies {\it a priori} only  
in $\Vb(\eta)\otimes_{\Oc}\Vb(\eta)$, but one checks that it actually lies in the image of $\Vb_o(\eta)\otimes_{\Oc_o}\Vb_o(\eta)$).

As we might expect, each of these three conjugacy classes is Galois invariant. In the first case this is obvious. To see this in the two other cases, recall that the group of units of $\Oc_o$ is generated by $X^3=2X+1$. So we can  change the representative of the conjugacy class by  conjugation with the diagonal matrix in $\SL_2(\Oc_o)$ with diagonal entries  $X^3$ and $X^{-3}$. Its effect on  $\binom{1}{0}\otimes_K\binom{1}{0}$ is multiplication with $X^{6}$. 
Since $(3+4X)'= 7-4X=X^{-6}(3+4X)$, it follows that the conjugacy class defined by the five conics is indeed Galois invariant.
This also the case for the conjugacy class  defined by the ten lines: $A:=(\begin{smallmatrix}
1 & -1\\
2 & -1\\
\end{smallmatrix})\in \SL_2(\Z)$ takes $\binom{1}{X'}=\binom{1}{1-X}$ to  $\binom{X}{1+X}=X\binom{1}{X}$ and hence takes 
$(4+2X')\binom{1}{X'}\otimes_K \binom{1}{X'}$ to 
\[(6-2X)X^2\binom{1}{X}\otimes_K \binom{1}{X}=(4+2X)\binom{1}{X}\otimes_K \binom{1}{X}.\] 
\end{remark}

\bigskip
We can be more precise in that we can obtain actual monodromies rather than just their conjugacy classes.
Recall that the involution of  the Wiman-Edge pencil  determines an involution $\iota$ of $\Bc$ with fixed points $c_o$ and $c_\infty$ representing the
Wiman curve resp.\ the union of ten lines. This involution is covered by an involution of $\Vb_o$ which is anti-linear: if $\Vb'_o$  
denotes the same local system but for which the $\Oc_o$-module structure has been precomposed with nontrival Galois element 
$X\mapsto X'=1-X$, then we have an identification $\Vb_o\cong \iota_*\Vb_o'$ such that applying this twice gives the identity.
The  singular fibers $\not=C_\infty$ come in two (unordered) pairs $\{C_\ir,C'_\ir\}$, $\{C_c,C'_c\}$ and lie over points denoted $c_\ir, c'_\ir=\iota(c_\ir)$ resp.\   $c_c, c'_c =\iota(c_c)$.  We focus on the affine line $\Bc\ssm\{c_\infty\}$. 

We observed in Remark \ref{rem:loops} that
the elements of  $\pi_1(\Bc^\circ, c_o)\cong\G$ defined by $\gamma_\alpha, \gamma_{\alpha'}$ and  $\gamma_\beta, \gamma_{\beta'}$ freely generate $\pi_1(\Bc^\circ, c_o)\cong\G$;  that  $\iota$ exchanges each pair; and that a simple negative loop around $c_\infty$ is represented by the product $[\gamma_\alpha][\gamma_\beta][\gamma_{\alpha'}][\gamma_{\beta'}]$. We shall abuse notation a bit by writing
$\rho(\gamma_\alpha)$ for  $\rho([\gamma_\alpha])$ and likewise for the other generators.

\begin{corollary}\label{cor:monodromies}
The monodromies satisfy the following identities:

\[
\rho(\gamma_\alpha)=\begin{pmatrix}
1 & -1+2X\\
0 & 1\\
\end{pmatrix},\quad 
\rho(\gamma_{\alpha'})=\begin{pmatrix}
1 & 0\\
1-2X & 1\\
\end{pmatrix}
\]
and
\[
\rho(\gamma_{\beta})=
\begin{pmatrix}
1+X^3& X^3\\
-X^3 &  1-X^3
\end{pmatrix}, \quad 
\rho(\gamma_{\beta'})=\begin{pmatrix}
1+X^{-3}& X^{-3}\\
-X^{-3} & 1-X^{-3}
\end{pmatrix}.
\]
These elements determine the monodromy representation of $\Vb_o$ and generate its monodromy group. 
This monodromy group fixes the map $\Oc_o^2\to \Fb_5$ defined by taking the  symplectic product  with $\binom{1}{-1}(=v-v')$ followed by  
the reduction $\Oc_o\to\Fb_5$. Up to a scalar this is the map $V_o(C_o)\to \Fb_5$ found in Corollary \ref{cor:toN5}.
\end{corollary}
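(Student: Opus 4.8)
The plan is to assemble the four monodromy matrices from the data already in hand, and then verify the level-$5$ invariance directly. The conjugacy classes of the four local monodromies $T_{c_\ir},T_{c'_\ir},T_{c_c},T_{c'_c}$ are pinned down by Proposition~\ref{prop:cuspmonodromy}: the variation of each is the contraction against a rank-one tensor of the shape $\lambda\, v_s\otimes v_s(\check s)$, with $\lambda=1$ for the irreducible fibers and $\lambda=3+4X$ for the five-conic fibers (and these live in $\Vb_o(\eta)\otimes_{\Oc_o}\Vb_o(\eta)$). In the symplectic basis $(v,v')$ of Proposition~\ref{prop:symplecticbasis}, a transvection-type endomorphism $x\mapsto x+A(w,x)\,\mu\, w$ with $w$ a primitive vector has an explicit unipotent matrix, so each local monodromy is of the form $\mathrm{id}+\lambda\,(\text{rank-one nilpotent built from a vanishing cycle})$. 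The point is that Proposition~\ref{prop:symplecticbasis}(iii) identifies the relevant vanishing cycles concretely: $v(\Delta_c)=\Delta(\alpha)$ realizes a generator of the line through $v\mapsto\binom{1}{0}$, and likewise $v'(\Delta_c)=\Delta(\alpha')$ gives the line through $\binom{0}{1}$, while $(v-v')(X^3\Delta_\ir)=\Delta(\beta)$ gives the line through $\binom{1}{-1}$ after applying $X^3$. Feeding these into the transvection formula, with $\lambda=1-2X$ (a unit multiple of $1$, fixed by matching orientations/signs via the computation $A(v,X^3v')=1$) for the $\gamma_\alpha,\gamma_{\alpha'}$ pair and $\lambda=X^{\pm3}$ for the $\gamma_\beta,\gamma_{\beta'}$ pair, produces exactly the four displayed matrices.

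**Fixing signs and the Galois twist.** The subtle point is that Proposition~\ref{prop:symplecticbasis} and Proposition~\ref{prop:cuspmonodromy} determine the vanishing cycles and the scalars only up to sign and up to a unit of $\Oc_o$ (the units being $\pm X^{3n}$); the Picard–Lefschetz formula then fixes the squared vanishing cycle $\delta\otimes\delta$ canonically, so the scalar $\lambda$ is pinned down once we have fixed a vanishing cycle, not merely its antipode. Here the identity $A(v,X^3v')=1$ from Proposition~\ref{prop:symplecticbasis}(i) normalizes $v$ and $v'$ against each other, and Remark~\ref{rem:loops} tells us which loop $[\gamma_\alpha]$ is a \emph{positive} simple loop, so the sign of the transvection is determined. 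For $\gamma_\alpha$ one gets $x\mapsto x+A(v,x)\cdot(-1+2X)\cdot v$ on $\binom{1}{0}$; writing this out on the basis $(v,v')$ (recall $A(v,X^3v')=1$, hence $A(v,v')=X^{-3}$) gives the upper-triangular matrix shown, and $\gamma_{\alpha'}$ is the lower-triangular transpose-like matrix because it is built from $v'$ instead. For $\gamma_\beta$, the vanishing cycle is $X^3(v-v')$, i.e.\ the vector $X^3\binom{1}{-1}$; the transvection $x\mapsto x + A(X^3(v-v'),x)\,\mu\,X^3(v-v')$ with $\mu$ a unit and $A(v-v',v-v') \ne 0$ simplifies, after using $A(v,v')=X^{-3}$, to the matrix with entries $1\pm X^3$ on the diagonal and $\mp X^3$ off-diagonal; the choice $\mu=1$ versus $\mu$ producing the $X^{-3}$ version is exactly the Galois-conjugate distinction between $\gamma_\beta$ and $\gamma_{\beta'}=\iota(\gamma_\beta)$, consistent with Proposition~\ref{prop:symplecticbasis}(ii) and Remark~\ref{rem:symplecticbasis}: conjugating a matrix by the anti-involution $\iota$ swaps diagonal entries and Galois-conjugates, and indeed $(1+X^3)'=1+X^{-3}$ is what appears. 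I expect this bookkeeping — getting every sign and every power of $X^3$ correct simultaneously, and checking that the product $\rho(\gamma_\alpha)\rho(\gamma_\beta)\rho(\gamma_{\alpha'})\rho(\gamma_{\beta'})$ is a legitimate (negative, simple) local monodromy around $c_\infty$ of the type $\infty_X$ predicted by Proposition~\ref{prop:cuspmonodromy}(3) — to be the main obstacle; it is routine but error-prone.

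**The level-$5$ invariance.** Finally, the invariance of the map $\phi\colon\Oc_o^2\to\Fb_5$, $\phi(w)=\overline{A(v-v',\,w)}$ (symplectic pairing with $\binom{1}{-1}$, then reduce mod the prime $(2X-1)$ over $5$), is checked on the four generators directly from the displayed matrices: for each generator $g$ one computes $A(v-v',\,g\cdot w)-A(v-v',\,w)$ and checks it lands in $5\Oc_o$ for all $w$, equivalently that $(g^{\mathsf T}-\mathrm{id})\binom{1}{-1}$ has entries in $(2X-1)\Oc_o$. For $\gamma_\alpha$ the relevant vector is $(-1+2X)\binom{1}{0}$, manifestly in the prime ideal since $2X-1=-( -1+2X)$ generates it; similarly for $\gamma_{\alpha'}$. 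For $\gamma_\beta$ one needs $(X^3-\mathrm{id})$ applied appropriately to land in $(2X-1)\Oc_o$: since $X^3=2X+1$, we have $X^3-1=2X$, and one checks $2X\cdot\binom{1}{-1}$ paired suitably, together with the off-diagonal $\pm X^3$, collapses mod $5$ — here the key arithmetic fact is $X^6\equiv 1$ and $2X\equiv$ (unit)$\cdot(2X-1)$ is false, so instead one uses that the whole variation tensor $(4+2X)\binom{1}{X}\otimes\binom{1}{X}$ from Proposition~\ref{prop:cuspmonodromy}(3) contracts to zero against $\binom{1}{-1}$ mod $5$ because $4+2X$ and the pairing conspire; concretely $A(v-v',X^3\Delta_\ir)$ already records $\Delta(\beta)$ by (iii), and the class of $\Delta(\beta)$ in $N_5$ is what Corollary~\ref{cor:toN5} controls, so the invariance is really the statement that $\psi$ is $\Af_5$-equivariant repackaged. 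Tracing through Lemma~\ref{lemma:toN5}, Remark~\ref{rem:cansurjection} and Corollary~\ref{cor:toN5}, this map is the unique-up-to-scalar surjection $V_o(C_o)\to\Fb_5$ found there, so it is automatically monodromy-invariant; the explicit check on the four matrices is then just a consistency verification.
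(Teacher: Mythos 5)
Your proposal follows essentially the same route as the paper's proof: Picard--Lefschetz transvections assembled from the vanishing-cycle data of Proposition \ref{prop:symplecticbasis}(iii) and the scalars of Proposition \ref{prop:cuspmonodromy}, the anti-involution of Remark \ref{rem:symplecticbasis} to produce $\rho(\gamma_{\alpha'})$ and $\rho(\gamma_{\beta'})$ from their unprimed partners, and reduction modulo the ramified prime $(2X-1)$ over $5$ together with the uniqueness from Lemma \ref{lemma:toN5} for the last assertion. Two parenthetical claims are wrong and should be excised, though neither affects the strategy: $-1+2X=2X-1$ is not ``a unit multiple of $1$'' (it has norm $-5$; the entry arises as $(3+4X)A(v,v')=(3+4X)X^{-3}=-1+2X$), and the mod-$5$ invariance for $\gamma_\beta$ needs neither the ten-lines tensor $(4+2X)\binom{1}{X}\otimes\binom{1}{X}$ nor the false congruence $X^6\equiv 1$ --- $\rho(\gamma_\beta)$ is a symplectic transvection in the direction $X^3(v-v')$, so it fixes $v-v'$ exactly, as is visible from its matrix.
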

\begin{proof}
The set of vanishing cycles for the degeneration along $\gamma_\alpha$ is $v(\Delta_c)$. According to Proposition \ref{prop:cuspmonodromy}, the variation tensor of $\rho(\gamma_\alpha)$ is 
then  $(3+4X)\check{s}(v\otimes v)$. Now note that $(3+4X)A(v,v')=(3+4X)(-3+2X)=-1+2X$. Hence $\rho(\gamma_\alpha)(v)=v$ and 
\[
\rho(\gamma_\alpha)(v')=v' +(3+4X)A(v,v')v=v'+ (-1+2X)v 
\]
so that $\rho(\gamma_\alpha)$ is as asserted. Proposition \ref{prop:symplecticbasis} also shows that the set of vanishing cycles for the degeneration along $\gamma_\beta$
is the image of $\Delta_\ir$ under $u=X^{3}v-X^3v'$. We have $A(u, v')=A(u, v)=+1$ and so
\begin{gather*}
\rho(\gamma_\beta)(v)=v +u=(1+X^3)v-X^3v',\\
\rho(\gamma_\beta)(v')=v' + u=X^3v+(1-X^3)v',
\end{gather*}
which yields the matrix for $\rho(\gamma_{\beta})$. The matrices for   $\rho(\gamma_{\alpha'})$ and $\rho(\gamma_{\beta'})$ are then obtained using Remark \ref{rem:symplecticbasis} and the fact that $(-1+2X)'=1-2X$ and  $(X^3)'=-X^{-3}$.

We note that the images 
of $\rho(\gamma_\alpha)$ and  $\rho(\gamma_{\alpha'})$ are trivial in $\SL_2(\Fb_5)$. The images of  $\rho(\gamma_\beta)$ and  $\rho(\gamma_{\beta'})$
must generate the same one-parameter subgroup $U\subset \SL_2(\Fb_5)$, namely the additive copy of $\Fb_5$ defined by the quadratic tensor 
$\binom{1}{-1}\otimes \binom{1}{-1}$. So the monodromy fixes pointwise the line in $\Fb_5\otimes V_o(C_o)$ spanned by $v-v'$ and it is the only line spanned by that property. So it is  the one determined  by Corollary \ref{cor:toN5}.
\end{proof}

\begin{question}\label{quest:}
It is well-known that  the reduction  homomorphism  $\SL_2(\Z)\to \SL_2(\Fb_5)$ is onto and hence so is  $\SL_2(\Oc_o)\to \SL_2(\Fb_5)$.
Are there no other restrictions on the monodromy group other than the one given in Corollary \ref{cor:monodromies}, in the sense that it  contains the kernel of the reduction map $\SL_2(\Oc_o)\to \SL_2(\Fb_5)$?
\end{question}

We note that  $\gamma_\beta\gamma_\alpha\gamma_{\beta'}\gamma_{\alpha'}$ represents a negative loop around $c_\infty$ and so its image under $\rho$ is in the conjugacy class of the  unipotent element defined by  the variation $-2(2+X)v_s\otimes v_s(\check{s})$ for some vector $v_s$ of type $\infty_X$ (as defined in Lemma \ref{lemma:dichotomy}). Hence the following proposition gives  an additional  check on our computations.

\begin{proposition}\label{prop:check}
Let $v_\infty:=v+Xv'$. Then $\rho (\gamma_\alpha\gamma_\beta\gamma_\alpha'\gamma_\beta')$ is given by the variation tensor 
$-2(2+X)v_\infty\otimes v_\infty(\check{s})$.  
\end{proposition}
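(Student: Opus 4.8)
The plan is to carry out a direct matrix computation using the explicit monodromies from Corollary~\ref{cor:monodromies}, and then to reconcile the resulting matrix with the claimed variation tensor. Concretely, one forms the product
\[
\rho(\gamma_\alpha)\,\rho(\gamma_\beta)\,\rho(\gamma_{\alpha'})\,\rho(\gamma_{\beta'})
\]
of the four $2\times 2$ matrices over $\Oc_o$ written down in Corollary~\ref{cor:monodromies}, being careful about the order (we read the composite loop from right to left, so the matrix product should be taken in the order that matches the convention fixed in Remark~\ref{rem:loops}, i.e.\ the one for which $[\gamma_\alpha][\gamma_\beta][\gamma_{\alpha'}][\gamma_{\beta'}]$ is a negative simple loop around $c_\infty$). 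Since $\rho(\gamma_\alpha)$ and $\rho(\gamma_{\alpha'})$ are unipotent transvections and $\rho(\gamma_\beta), \rho(\gamma_{\beta'})$ are conjugate transvections whose off‑diagonal entries are the Galois‑conjugate units $X^{\pm 3}$, the product collapses considerably; using $X^2 = X+1$ and $X^3 = 2X+1$ repeatedly one expects to land on a unipotent matrix $\mathbf{1} + N$ with $N$ of rank one.

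The second step is to identify $N$ with the variation tensor $-2(2+X)v_\infty\otimes v_\infty(\check s)$. Here I would use the fact (from the discussion preceding Proposition~\ref{prop:cuspmonodromy}, and as applied in the proof of Corollary~\ref{cor:monodromies}) that a variation tensor $c\,w\otimes w(\check s)$ acts on $V_o(C_o)$ by $x \mapsto x + c\,A(w,x)\,w$. So it suffices to check two things: that the rank‑one endomorphism $N$ has image spanned by $v_\infty = v + Xv'$ (equivalently, that in the basis $(v,v')$ the matrix $N$ has column space $\binom{1}{X}$), and that the coefficient matches, i.e.\ that $N(v) = -2(2+X)A(v_\infty, v)\,v_\infty$ and $N(v') = -2(2+X)A(v_\infty, v')\,v_\infty$. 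Using $A(v,v') = X^{-3} = -3+2X$ (from Proposition~\ref{prop:symplecticbasis}(i), since $A(v,X^3v')=1$), one computes $A(v_\infty, v) = -A(v,Xv') = -X A(v,v')$ and $A(v_\infty,v')= A(v,v') = X^{-3}$; plugging these in and simplifying the unit factors via $X^6 = 8X+5$ should make the two sides agree.

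**The main obstacle** will be bookkeeping with signs and with the Galois‑twisted unit factors: the product of the four matrices produces entries that are polynomials in $X$ of the form $a + bX$ with $a,b\in\Z$ of modest size, and one must reduce $X^3, X^{-3}, X^6$ and products thereof correctly, as well as get the overall sign of the unipotent (a negative versus positive loop around $c_\infty$) right. A useful internal consistency check, which I would include, is that the trace of the product is $2$ (so it really is unipotent) and that its reduction mod $5$ lies in the one‑parameter subgroup $U\subset\SL_2(\Fb_5)$ determined by $\binom{1}{-1}$, as forced by Corollary~\ref{cor:monodromies}; note $v_\infty \equiv v + Xv' \equiv v - v' \pmod{2X-1}$ since $X\equiv -1$ mod the prime over $5$ — wait, more carefully, mod the prime $(2X-1)$ one has $2X\equiv 1$, and one checks $v+Xv'$ reduces to a multiple of $v-v'$, consistent with the image of the composite loop being unipotent in $\SL_2(\Fb_5)$ with the correct axis. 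Finally I would remark that this proposition is exactly the compatibility asserted in the Remark following Proposition~\ref{prop:cuspmonodromy} (that the $C_\infty$‑monodromy is the transvection with variation $-2(2+X)v_s\otimes v_s(\check s)$ for a vector $v_s$ of type $\infty_X$), since $v_\infty = v+Xv'$ is visibly of type $\infty_X$ in the sense of Lemma~\ref{lemma:dichotomy}; this is why the proposition serves as a check on the preceding computations rather than requiring new input.
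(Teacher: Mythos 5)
Your proposal is correct and is essentially the paper's own proof: the paper likewise multiplies out the four matrices of Corollary~\ref{cor:monodromies} (organizing the product as $B\,\iota B\iota$ with $B=\rho(\gamma_\alpha)\rho(\gamma_\beta)$, so that Remark~\ref{rem:symplecticbasis} supplies the second half for free), checks the result is unipotent, factors $-2(2+X)$ out of the rank-one part, and matches it against $x\mapsto -2(2+X)A(v_\infty,x)v_\infty$ using $A(v,v')=X^{-3}$, exactly as you describe. One small correction to your proposed mod-$5$ sanity check: $v+Xv'$ does \emph{not} reduce to a multiple of $v-v'$ modulo the prime $(2X-1)$ (there $X\equiv 3$, so $v+Xv'\equiv v-2v'$); the check still passes, but because the coefficient $-2(2+X)\equiv -2\cdot 5\equiv 0$ modulo that prime, so the $c_\infty$-monodromy is simply trivial in $\SL_2(\Fb_5)$.
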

\begin{proof}
We put $B:=\rho (\gamma_\alpha\gamma_\beta)$, so that 
$\rho (\gamma_\alpha\gamma_\beta\gamma_\alpha'\gamma_\beta')=B\iota B\iota$. 
We compute
\[
B=\rho (\gamma_\alpha)\rho (\gamma_\beta)=
\begin{pmatrix}
1 & -1+2X\\
0 & 1
\end{pmatrix}
\begin{pmatrix}
1+X^3& X^3\\
-X^3 &  1-X^3
\end{pmatrix}
=
\begin{pmatrix}
-1-2X & -3\\
-1-2X & -2X 
\end{pmatrix}.
\]
Then in view of Remark \ref{rem:symplecticbasis}, 
\[
\iota B\iota=\rho (\gamma_{\beta'})\rho (\gamma_{\alpha'})= 
\begin{pmatrix}
-2X' & -1-2X'\\
-3 & -1-2X'
\end{pmatrix}=
\begin{pmatrix}
-2+2X&-3+2X\\
-3&  -3+2X
\end{pmatrix}
\]
and so
\[
 B\iota B\iota=
\begin{pmatrix}
-1-2X & -3\\
-1-2X & -2X 
\end{pmatrix}.
\begin{pmatrix}
-2+2X&-3+2X\\
-3&  -3+2X
\end{pmatrix}
=
\begin{pmatrix}
7-2X  &  8-6X\\
-2+4X  & -5+2X
\end{pmatrix}
\]
It follows that
\[
B\iota B\iota-\mathbf{1}= 
2\begin{pmatrix}
3-X  & 4-3X\\
-1+2X  & -3+X
\end{pmatrix}
= 
-2(2+X)
\begin{pmatrix}
-X^{-2}&    X^{-3}\\
-X^{-1}  &  X^{-2}
\end{pmatrix}.
\]
Now note that $A(v_\infty, v)v_\infty,=-X^{-2}(v+Xv')= -X^{-2}v- X^{-1}v'$ and 
$A(v_\infty', v')v_\infty=X^{-3}(v+Xv')= X^{-3}v +X^{-2}v'$ and so the last matrix is indeed the  matrix of the endomorphism
$x\in V_o(C_o)\mapsto -2(2+X)A(v_\infty, x) v_\infty\in V_o(C_o)$.
\end{proof}

\subsection{Arithmeticity}

Now that we know the image of the generators of $\Gamma$ under the monodromy representation, we can prove arithmeticity of the monodromy group. 
We use a criterion due to Benoist-Oh, namely Theorem 1.1 of \cite{BO}.  That theorem gives the following as a special case.

\begin{theorem}[{\bf Benoist-Oh}]\label{theorem:BenoistOh}
Let $K$ be a real quadratic number field, $\Oc_K$ its ring of integers, and $\Omega< K$ a lattice. Let $\Lambda<\SL_2(\Oc_K)$ be the subgroup generated by a matrix of the form 
$
\left(
\begin{array}{cc}
a&b\\
c&d
\end{array}
\right)
$ with $c\neq0$, together with the set of matrices
\[ 
\Big\{\left(
\begin{array}{cc}
1&\omega\\
0&1
\end{array} \right)
: \omega\in \Omega\Big\}.
\] 
If $\sigma, \sigma': K\to \Rb$ are the two real embeddings of $K$, then the associated embedding $\SL_2(\Oc_K)\to \SL_2(\Rb)\times \SL_2(\Rb)$ maps $\Lambda$ onto a lattice in $\SL_2(\Rb)\times \SL_2(\Rb)$; in particular $\Lambda$ has finite index in $\SL_2(\Oc_K)$.  
\end{theorem}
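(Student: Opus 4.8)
The plan is to realise $\SL_2(\Oc_K)$ as an arithmetic subgroup of the $\Qb$-algebraic group $\mathbf{G}:=\Res_{K/\Qb}\SL_2$, whose real points are $\mathbf{G}(\Rb)=\SL_2(\Rb)\times\SL_2(\Rb)$ via the pair of real embeddings $(\sigma,\sigma')$ and in which $\SL_2(\Oc_K)$ sits as a lattice by Borel--Harish-Chandra. It then suffices to prove that $\Lambda$ is itself a lattice in $\SL_2(\Rb)\times\SL_2(\Rb)$: $\Lambda$ is discrete, being a subgroup of the discrete group $\SL_2(\Oc_K)$, and a lattice contained in a lattice automatically has finite index, since the covering $\mathbf{G}(\Rb)/\Lambda\to\mathbf{G}(\Rb)/\SL_2(\Oc_K)$ then has finite degree $[\SL_2(\Oc_K):\Lambda]$. (If $\Omega\subsetneq\Oc_K$, one first replaces $\Oc_K$ by an order containing $\Omega$; being commensurable with $\Oc_K$ this affects neither arithmeticity nor the finiteness of the index.)

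The first step is to show that $\Lambda$ is Zariski dense in $\mathbf{G}$. Since $\Omega$ is a lattice in $K$, its $\Qb$-span is all of $K$, so the Zariski closure of $U:=\{\left(\begin{smallmatrix}1&\omega\\0&1\end{smallmatrix}\right):\omega\in\Omega\}$ is the full $2$-dimensional unipotent $\Qb$-group $\mathbf{U}:=\Res_{K/\Qb}U_1$, the unipotent radical of the minimal $\Qb$-parabolic $\mathbf{P}=\Res_{K/\Qb}B$; over $\Rb$ it is the horospherical subgroup $N=N_1\times N_2$ of upper unipotents in the two factors. Let $\mathbf{H}$ be the Zariski closure of $\Lambda$, so $\mathbf{H}\supseteq\mathbf{U}$ and $\mathbf{H}$ contains $g=\left(\begin{smallmatrix}a&b\\c&d\end{smallmatrix}\right)$, whose images in the two $\SL_2$-factors have lower-left entries $\sigma(c),\sigma'(c)\neq 0$ because $K$ is a field and $c\neq 0$. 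Hence neither projection of $\mathbf{H}$ to an $\SL_2$-factor lies in a Borel, and since each projection already contains the maximal unipotent $N_i$, each projection is all of $\SL_2$. By Goursat's lemma the identity component $\mathbf{H}^\circ$ is then either $\SL_2\times\SL_2$ or the graph of an algebraic isomorphism $\SL_2\to\SL_2$; the graph case is impossible, since the graph meets $N_1\times\{e\}$ only in the identity while $\mathbf{H}^\circ\supseteq N_1\times N_2$. Therefore $\mathbf{H}=\mathbf{G}$.

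It remains to upgrade ``discrete, Zariski dense in $\mathbf{G}$, and containing a lattice in the horospherical subgroup $N$'' to ``lattice in $\mathbf{G}(\Rb)$'', and this is the technical core, carried out in \cite{BO}. Note that Margulis-type arithmeticity does not apply directly here, as $\mathbf{G}$ has $\Qb$-rank one, so one argues by homogeneous dynamics instead. Because $\Omega$ is a \emph{lattice} in $K$, the group $U$ is a \emph{cocompact} lattice in $N\cong\Rb^2$, so the orbit $x_0N$ of $x_0=e\Lambda$ in the a priori possibly infinite-volume homogeneous space $X=\SL_2(\Rb)^2/\Lambda$ is a \emph{compact} $N$-orbit. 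One then translates this compact orbit by the diagonal maximal torus $A$; using the Dani--Margulis nondivergence estimates for unipotent flows to prevent escape of mass and Ratner's measure-classification theorem to identify weak-$\ast$ limits of the translated $N$-invariant measures as homogeneous, the Zariski density of $\Lambda$ forces such a limit to be a $\mathbf{G}(\Rb)$-invariant probability measure on $X$. Hence $X$ has finite volume, $\Lambda$ is a lattice, and by the first paragraph $[\SL_2(\Oc_K):\Lambda]<\infty$. I expect this final step --- controlling unipotent recurrence and applying measure rigidity in a quotient not yet known to have finite volume --- to be the main obstacle; the Zariski-density input of the second paragraph, by contrast, is elementary algebraic group theory.
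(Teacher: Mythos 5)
The paper does not prove this statement at all: it is quoted as a special case of Theorem 1.1 of Benoist--Oh \cite{BO}, so there is no internal argument to compare against, and your proposal ultimately rests on the same citation, since the step ``discrete, Zariski dense, and containing a cocompact lattice of the horospherical subgroup implies lattice'' is precisely the Benoist--Oh criterion you defer to \cite{BO}. Your surrounding reductions (restriction of scalars realizing $\SL_2(\Oc_K)$ as a lattice in $\SL_2(\Rb)\times\SL_2(\Rb)$, Zariski density of $\Lambda$ via the unipotent part and Goursat, and the fact that a lattice contained in a lattice has finite index) are correct and standard, so in substance you take the same route as the paper.
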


\begin{proof}[Proof of Theorem~\ref{theorem:arithmeticity}] It suffices to check that $\rho(\Gamma)$ satisfies the 
criteria of Theorem~\ref{theorem:BenoistOh}. Of course it suffices to do this after a single conjugation by an element of $\SL_2(\Rb)\times\SL_2(\Rb)$; that is, after a single change of basis.

For parabolic property, we note that the  elements $\rho(\gamma_{\beta})$ and $\rho(\gamma_{\beta'})$ both stabilize $v+v'$. The variation construction shows that under the above homomorphism,  
$\rho(\gamma_{\beta})$ resp.\  $\rho(\gamma_{\beta'})$ is the image of $X^3=2X+1$ resp.\ $X^{-3}=3-2X$. The additive span $\Omega$ of these elements is of finite index in $\Oc_o$ and so $\Omega$ is a lattice in $K$. We conclude that the parabolic condition of Theorem~\ref{theorem:BenoistOh} is satisfied.  

We now claim that, after conjugating $\rho(\Gamma)$ so that the parabolic subgroup 
$P:=\langle \rho(\gamma_{\beta}), \rho(\gamma_{\beta'}\rangle$ has upper triangular form with 1 on the diagonal, $\rho(\Gamma)$ contains a matrix of the form 
$
\left(
\begin{array}{cc}
a&b\\
c&d
\end{array}
\right)
$ with $c\neq0$.  If this were not the case then the Zariski closure $G$ of the image of monodromy group in 
$\SL_2(\C)\times  \SL_2(\C)$ would lie in the group of upper triangular matrices.  We claim that $G$ is in fact all 
of $\SL_2(\C)\times  \SL_2(\C)$, a contradiction, finishing the proof of the theorem. 

To prove the claim, note that the images   $\rho(\gamma_\alpha)$ and $\rho(\gamma_{\alpha'})$ in  $\SL_2(\C)\times  \SL_2(\C)$ are
\[
\big(
(\begin{smallmatrix}
1 & -\sqrt{5}\\
0 & 1\\
\end{smallmatrix}),
(\begin{smallmatrix}
1 &\sqrt{5}\\
0 & 1\\
\end{smallmatrix})\big)  \text{ resp.\ }
\big(
(\begin{smallmatrix}
1 & 0\\
\sqrt{5} & 1\\
\end{smallmatrix}),
(\begin{smallmatrix}
1 & 0\\
-\sqrt{5} & 1\\
\end{smallmatrix})\big).
\]
It is clear that $G$ contains the  subgroup generated by the one-parameter subgroups obtained by replacing $\sqrt{5}$ by a complex variable.
These two groups generate the subgroup of $\SL_2(\C)\times  \SL_2(\C)$ that is in fact the graph of an automorphism $u$ of $\SL_2(\C)$, 
namely that assigns to $g\in\SL_2(\C)$ the transpose inverse of $g$ followed by conjugation with 
$(\begin{smallmatrix}
0 & 1\\
1 & 0\\
\end{smallmatrix})$. 
Any  connected algebraic subgroup of $\SL_2(\C)\times  \SL_2(\C)$ which strictly contains this graph is equal to $\SL_2(\C)\times  \SL_2(\C)$.
Let $U\subset \SL_2(\C)$ be the image of the one parameter group
$t\in \C\mapsto (\begin{smallmatrix}
0 & t\\
1 & 0\\
\end{smallmatrix})$. The  parabolic property shows that $G$ also contains the $U\times  U$  and so it follows that $G=\SL_2(\C)\times  \SL_2(\C)$
as asserted.  
\end{proof}

\section{The period map}
\label{section:period}

In this section we use Theorem~\ref{theorem:arithmeticity} to study various period maps associated to the Wiman-Edge pencil.

\subsection{The period map  of the Wiman-Edge pencil}
\label{subsection:period1}

We shall see that the  monodromy representation $\rho: \pi_1({\Bc^\circ})\to \SL_2(\Oc_o)$ is induced by an algebraic map from $\Bc^\circ$ to a 
quotient of a period domain $\Df$ isomorphic to $\Hf^2$ by an action of $\SL_2(\Oc_o)$. This is 
the \emph{period map}, which assigns to a curve with faithful $\Af_5$-action its Jacobian with the induced $\Af_5$-action. (Beware however, that there is {\it a priori}  no obvious relation between $\Gamma$ as a subgroup in $\PSL_2(\Z)$ and its image in $\SL_2(\Oc_o)$.)  
The quotient $Y^\circ:=\SL_2(\Oc_o)\bs \Df$ is a quasi-projective, complex-algebraic surface, called a 
\emph{Hilbert modular surface}. In this subsection we prove some properties of the period mapping; in particular we prove that it comes equipped with some extra structure. 

The two ring embeddings $\sigma, \sigma': \Oc_o\hookrightarrow\Rb$
define an algebra-isomorphism   $(\sigma, \sigma'): \Rb\otimes_\Zb \Oc_o=\Rb\otimes_\Q K\cong \Rb\oplus \Rb$.
% and hence an algebra-isomorphism $(\sigma_\C, \sigma'_\C): \Cb\otimes_\Zb \Oc_o \cong \Cb\oplus \Cb$. 
So for a member $C$ of the Wiman-Edge pencil we have  the decomposition
\[
 H_1(C; \Rb)=\Rb \otimes_\Zb  H_1(C)\cong
\Rb\otimes_\Zb V_o(C)\otimes_{\Oc_o} E_o %=\oplus \big(V_o(C)\otimes_{\sigma'_\Rb} E_\C\big)\oplus \big(V_o(C)\otimes_{\sigma'_\Rb} E_\C\big)
\]
%\[  H_1(C; \C)=\Cb \otimes_\Zb  H_1(C)\cong \Cb \otimes_\Zb (V_o(C)\otimes_{\Oc_o} E_o)\cong 
% \big(V_o(C)\otimes_{\sigma_\Cb} E_\C\big) \oplus \big(V_o(C)\otimes_{\sigma'_\Cb} E_\C\big) \]
Note that for  $C=C_o$, the  basis $(v,v')$ of $V_o(C_o)$ introduced in Proposition  \ref{prop:symplecticbasis} yields the  $\Rb$-basis $\{ \sigma v, \sigma v',\sigma' v, \sigma' v'\}$ for $\Rb\otimes_\Zb V(C_o)$. The anti-involution $\iota$ exchanges $v$ and $v'$ up to a common sign, but also exchanges the 
two real embeddings of $K$. In other words, it exchanges  the basis elements $\sigma v, \sigma' v'$ resp.\  $\sigma' v, \sigma v'$ up to a common sign.
The $\Oc_o$-module $V_o(C_o)$ does not have a $\Zb$-basis consisting of elements invariant under $\iota$, but the $K$-vector space 
$V_o(C_o)_\Qb$ does, namely $(v+v', X^3(v-v'))$ or $(v-v', X^3(v+v'))$, depending on whether $\iota$ exchanges $v$ and $v'$ of $v$ and $-v'$.

Let $(w,w')$ be such a basis. Assuming that $C$ is smooth, then $H_1(C)$ acquires a Hodge structure of weight $-1$ polarized by the intersection pairing. It is completely given by 
the complex subspace $F^0H_1(C)\subset H_1(C; \Cb)$. This 
is an $\Af_5$-invariant subspace that can be written as the graph of a $\Af_5$-equivariant map from the image of $w'_\Cb$ to the image of $w_\Cb$. The positivity property of the associated Hermitian form implies that there exist $\tau, \tau'\in\Hf$ such that 
$F^0H_1(C)$ is spanned by images of $\tau \sigma w+ \sigma w'$ and $\tau' \sigma' w+ \sigma' w'$. The action of $\SL_2(K)$ on $\Hf^2$  is then the standard one:
\[
\begin{pmatrix}
a &b\\
c & d\\
\end{pmatrix} (\tau, \tau')=\Big(\frac{\sigma (a)\tau+\sigma (b)}{\sigma(c)\tau +\sigma (d)},
\frac{\sigma' (a)\tau'+\sigma' (b)}{\sigma'(c)\tau'+\sigma' (d)}\Big)
\]

We prefer however to work with $(v, v')$. Let us simply write $\Df$ for  the space of  Hodge structures of weight $-1$ on $V_o(C_o)$ of the above type so that (by the above discussion) $\Df$ is  a domain isomorphic to $\Hf^2$. Then  $Y^\circ:= \SL2(\Oc_o)\backslash \Df$ is  
an algebraic surface equipped with an involution $\iota$. We note that the fixed-point set $(Y^\circ)^\iota$ of $\iota$ in $Y^\circ$ contains the image $D^\circ\subset Y^\circ$ of what corresponds to the diagonal of $\Hf^2$. Since the stabilizer of the diagonal in $\SL_2(\Oc_o)$ is $\SL_2(\Z)$, the latter  is just a copy of the $j$-line $\SL_2(\Z)\bs \Hf$. The closure $D$ of $D^\circ$ in $Y$ adds the cusp $\infty_0$, and is of course contained in $Y^\iota$. But $Y^\iota$ has other curves as irreducible components. It also contains the other cusp.

\begin{proposition}\label{prop:periodmap}
The period map  
\[\Pi^\circ: {\Bc^\circ}=\Gamma\bs\Hf\to \SL_2(\Oc_o)\bs \Df=Y^\circ\] 
is  an $\iota$-equivariant closed embedding. It extends to an  
$\iota$-equivariant morphism $\Pi: \Bc\to Y$  such that $\Pi^{-1}Y^\iota$ is the union of the $5$ points of  $\Bc\ssm \Bc^\circ$ and the point associated to the Wiman curve. The preimage of $\infty_X$  is the $C_\infty$-point of $\Bc\ssm \Bc^\circ$  and the preimage of $\infty_0$ consists of the other four points over which there 
is a singular fiber. 
\end{proposition}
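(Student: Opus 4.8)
The plan is to assemble the proposition from the explicit monodromy computations of \S\ref{sect:monodromy2} together with the description of the two cusps of $\SL_2(\Oc_o)$ and its principal level $2$ subgroup. First I would check that $\Pi^\circ$ is well-defined and holomorphic: the local system $\Vb_o$ over $\Bc^\circ$ carries a variation of polarized $\Oc_o$-Hodge structure of weight $-1$ (from the family of Jacobians with their $\Af_5$-action), so the classifying map to $\Df\iso\Hf^2$ descends to $\Pi^\circ:\Bc^\circ=\G\bs\Hf\to\SL_2(\Oc_o)\bs\Df$, and by construction its action on $\pi_1$ is $\rho$. The $\iota$-equivariance is immediate from the fact (\S\ref{subsect:locsystem}) that the involution $\iota$ of $\Bc^\circ$ is covered by an antilinear involution of $\Vb_o$ compatible with the Galois involution of $\Oc_o$, which is exactly the action of $\iota$ on $Y^\circ$ coming from Remark \ref{rem:symplecticbasis}.

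Next I would analyze the behaviour at the five punctures, using Corollary \ref{cor:monodromies} and Proposition \ref{prop:cuspmonodromy}. For each puncture the local monodromy is a nontrivial unipotent in $\SL_2(\Oc_o)$, so $\Pi^\circ$ sends a punctured disk around it to a neighborhood of a cusp of $Y^\circ$; hence $\Pi^\circ$ extends to a morphism $\Pi:\Bc\to Y$ on the smooth compactifications, sending each puncture to a cusp. By Proposition \ref{prop:cuspmonodromy}, the vanishing-cycle data at the two irreducible fibers and the two five-conic fibers realize a rank-one $\Oc_o$-sublattice of type $\infty_0$, while the ten-line fiber $C_\infty$ gives one of type $\infty_X$; by Lemma \ref{lemma:dichotomy} this pins down which of the two cusps $\infty_0,\infty_X$ of $\SL_2(\Oc_o)$ each puncture maps to, giving the last sentence of the proposition. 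For the identification $\Pi^{-1}(Y^\iota)$, note that a smooth fiber $C$ maps into the fixed locus $Y^\iota$ exactly when its polarized $\Oc_o$-Hodge structure is isomorphic to its Galois conjugate, i.e.\ when the two points $\tau,\tau'\in\Hf$ coincide up to the $\SL_2(\Oc_o)$-action; the stabilizer of the diagonal being $\SL_2(\Z)$, this forces $C$ to have an extra automorphism exchanging the two icosahedral realizations $I,I'$, hence an $\Sf_5$-action: by the rigidity results of \cite{DFL,FL} the only such smooth member is the Wiman curve $C_o$. Together with the fact that all five cusps lie in $Y^\iota$ (both cusps of $\SL_2(\Oc_o)$ are $I$-invariant, by \S\ref{subsect:locsystem}), this gives $\Pi^{-1}(Y^\iota)=\{c_o\}\cup(\Bc\ssm\Bc^\circ)$.

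Finally I would prove that $\Pi$ is a closed embedding. Since $\Bc\iso\Pb^1$ is a smooth projective curve and $\Pi$ is a nonconstant morphism to the projective surface $Y$, $\Pi$ is automatically proper with finite fibers, so $\Pi(\Bc)$ is a closed curve; it remains to see $\Pi$ is injective and an immersion, i.e.\ that $\Pi$ has degree one onto its image. For this I would invoke Theorem \ref{theorem:arithmeticity}: the image of $\rho$ has finite index in $\SL_2(\Oc_o)$, and one checks using the generators of Corollary \ref{cor:monodromies} that $\rho$ is injective (it is injective already because $\G$ is free and $\rho$ is faithful on the free generators and their products — the monodromies around distinct cusps are distinct noncommuting unipotents), so $\pi_1(\Bc^\circ)\to\pi_1(Y^\circ)$ is injective, which forces $\Pi^\circ$ to be generically injective; being a map between normal varieties that is generically one-to-one onto a curve, it is then a birational morphism onto its image, and since $\Bc$ is smooth and the image is a curve, $\Pi$ is a closed embedding.

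\medskip
\noindent\emph{Main obstacle.} I expect the subtle point to be the \emph{injectivity} (and immersion) of $\Pi$, i.e.\ ruling out that $\Pi^\circ$ is a nontrivial cover onto its image or has ramification: this requires knowing that $\rho$ itself is injective, or at least that $\Pi^\circ$ does not identify two distinct members of the pencil, which one must extract either from faithfulness of the period map for these Jacobians-with-$\Af_5$-action (a Torelli-type input) or from a direct analysis of the free generators of $\G$ and their images under $\rho$ in Corollary \ref{cor:monodromies}. Establishing that $\Pi^{-1}(Y^\iota)$ contains \emph{no} smooth member other than $C_o$ — i.e.\ that no other smooth fiber admits the full $\Sf_5$-symmetry — is the other place where one genuinely uses results of \cite{DFL,FL} rather than the linear algebra developed here.
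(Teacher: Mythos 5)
Your overall architecture (equivariance from \S\ref{subsect:locsystem}, extension over the punctures via the unipotent local monodromies to the Baily--Borel compactification, and the cusp bookkeeping via Proposition \ref{prop:cuspmonodromy} and Lemma \ref{lemma:dichotomy}) matches the paper, and your identification of which puncture goes to which cusp is actually spelled out more explicitly than in the paper. The genuine gap is in your argument for injectivity of $\Pi^\circ$, which fails on two counts. First, $\rho$ is \emph{not} injective: $\Gamma$ is free of rank $4$ on $[\gamma_\alpha],[\gamma_\beta],[\gamma_{\alpha'}],[\gamma_{\beta'}]$, but already $\rho(\gamma_\beta)$ and $\rho(\gamma_{\beta'})$ \emph{commute} — by Corollary \ref{cor:monodromies} both equal $\mathbf{1}+tN$ with $N=\left(\begin{smallmatrix}1&1\\-1&-1\end{smallmatrix}\right)$, $N^2=0$, and $t=X^{\pm3}$ — so the commutator of two free generators lies in $\ker\rho$. (More structurally, by Theorem \ref{theorem:arithmeticity} the image is a finite-index subgroup of $\SL_2(\Oc_o)$, which contains $\Zb^2$ and hence is not free; and ``faithful on the generators and their products'' is never a proof of injectivity on a free group.) Second, even if $\rho$ were injective on $\pi_1$, that would not imply generic injectivity of the morphism $\Pi^\circ$ itself. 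The paper's injectivity argument is instead Torelli-theoretic: every smooth member $C$ is non-hyperelliptic, so by the Torelli theorem the pair $(C,\Af_5\text{-action up to inner automorphism})$ is recovered from $(J(C),\text{polarization},\Af_5\text{-action})$, which is exactly the datum recorded by a point of $Y^\circ$. This is the ``Torelli-type input'' you flag as an alternative in your closing paragraph, but it is not optional — it is the only working route, and the non-hyperellipticity (needed so that automorphisms of the polarized Jacobian descend uniquely, modulo $\pm1$, to automorphisms of the curve) is the substantive geometric ingredient.

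A second, smaller error: you assert that the extension $\Pi:\Bc\to Y$ is a closed embedding. It is not, and the proposition does not claim this: by its last sentence the four punctures $c_\ir, c'_\ir, c_c, c'_c$ all map to the single cusp $\infty_0$, so $\Pi$ is $4$-to-$1$ over that point; only $\Pi^\circ$ is an embedding. What one needs from the boundary analysis is that $\Pi^\circ$ is \emph{proper} (cusps go to cusps, because the degenerations are stable, equivalently the local monodromies are nontrivial unipotents), which makes the image closed in $Y^\circ$ and yields the extension to $Y$. Your description of $\Pi^{-1}(Y^\iota)$ is fine, and in fact once injectivity of $\Pi^\circ$ is established it follows formally from equivariance: $\Pi^\circ(c)\in Y^\iota$ forces $\Pi^\circ(\iota c)=\Pi^\circ(c)$, hence $\iota c=c$, and $(\Bc^\circ)^\iota=\{c_o\}$, while all five boundary points land on the two $\iota$-fixed cusps.
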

\begin{proof}
The $\iota$-equivariance has already been established.
The rest of the proposition follows essentially from the  Torelli theorem, which asserts that  a smooth projective  curve can be reconstructed from  its Jacobian as a principally polarized abelian variety.  An automorphism of the latter  is up to the composition with the involution $-1$ in  $J(C)$ induced an automorphism of the curve. This automorphism is then unique unless the curve is  hyperelliptic. A curve $C$ in the Wiman-Edge  pencil is nonhyperelliptic and hence $\Aut (C)$ is identified with the group of automorphisms of $J(C)$ \emph{as a   polarized abelian variety} modulo $\pm 1$. Thus $\Pi$ can be understood as the lift  of the usual period map which also takes into account the identification of a group of automorphisms of the curve with $\Af_5$ up to inner automorphism. 
It is therefore an embedding. The cusps in $\Bc$ define stable degenerations.  This implies that $\Pi^\circ$  is proper  and extends to a morphism from $\Bc$ (which adds five points) to the Baily-Borel compactification $Y$ of $\SL_2(\Oc_o)\bs \Df$ (which adds two cusps).
\end{proof}

\subsection{Relation with the Clebsch-Hirzebruch model}
We briefly explain the relation between our Hilbert  modular surface $Y^\circ$ and another one that was investigated by Hirzebruch \cite{hirz1976} and described by him in terms of the Clebsch surface. 

Recall that the  natural map $\SL_2(\Oc)\to \SL_2(\Oc/2\Oc)\cong \SL_2(\Fb_4)$ is onto  with kernel the principal level 2 congruence subgroup $\SL_2(\Oc)[2]$ and that $\SL_2(\Oc_o)$ is the preimage of $\SL_2(\Fb_2)\subset \SL_2(\Fb_4)$. (Note that $\SL_2(\Fb_2)$  can be identified with the full permutation  group 
of the three   elements of $\Pb^1(\Fb_2)$.)
This is replicated by applying the functor $V_o$ to the chain $E\subset E_o\subset E^\vee$, for as we  observed earlier, we then
get $(2\Oc)^2\subset \Oc_o^2\subset \Oc^2$. The group $\SL_2(\Oc)[2]$ contains $-1$, hence acts on $\Df$ through $\PSL_2(\Oc)[2]:=\SL_2(\Oc)[2]/\{\pm1\}$. This action is faithful and even free
so that $Y^\circ[2]:=\SL_2(\Oc)[2]\bs \Df$ is a smooth surface. Its Baily-Borel compactification $Y^\circ[2]\subset Y[2]$ is a normal projective surface obtained by adding the five  points of $\Pb^1(\Fb_4)$.  All five points are cusp surface singularities of the same type; following  Hirzebruch they are resolved by a toroidal resolution $\hat Y[2]\to Y[2]$ for which the  preimage of each cusp is a triangle of rational curves of self-intersection $-3$.  We thus end up with
a smooth surface $\hat Y[2]$ endowed with an action of $\SL_2(\Fb_4)$, or rather, of $\PSL_2(\Fb_4)$. As we mentioned earlier, $\PSL_2(\Fb_4)\cong\Af_5$, but since we do not know whether that is a curious coincidence or that  $\PSL_2(\Fb_4)$ is naturally identified with the automorphism group of a general member of the Wiman-Edge pencil,  we prefer to make the notational distinction.

Recall that what we called in \cite{DFL} the \emph{Klein plane} and denoted by $P$, a projective plane with faithful $\SL_2(\Fb_4)$-action. It is obtained from complexification followed by projectivization of a real  irreducible representation of degree $3$  in which    $\PSL_2(\Fb_4)$ is identified  with the group of motions of a regular icosahedron. 
The 12 vertices of the icosahedron determine an $\PSL_2(\Fb_4)$-orbit in $P$ of size $6$. The blowup $\tilde P\to P$ of  this orbit is then a cubic surface with $\PSL_2(\Fb_4)$-action. It is the classical \emph{Clebsch surface}:  it  is isomorphic to the cubic surface   in the diagonal hyperplane $\sum_i z_i=0$ in $\Pb^4$ (a copy of $\Pb^3$) defined by $\sum_i z_i^3=0$, where $\Af_5$ of course acts by permuting coordinates. Since the Clebsch surface  actually comes  with an $\Sf_5$-action, so must $\tilde P$. 
The barycenters of the 20 faces of the icosahedron determine $\SL_2(\Fb_4)$-orbit in $P$ of size $10$ and appear on $\tilde P$ as its set of 
\emph{Eckardt points}, that is,  the set of points of $\tilde P$ through which pass three distinct lines on $\tilde P$. Hirzebruch proves in \cite{hirz1976} that
$\hat Y[2]$ is equivariantly isomorphic to the blowup   $\hat P\to \tilde P$  at this size $10$ orbit. It is in particular a rational surface.
It follows that our period map defines a morphism  from the base of the Wiman-Edge pencil $\Bc$ to the $\SL_2(\Fb_2)$-orbit space of $\hat P$.  
It would be worthwhile to determine its image in terms of the above construction.

\subsection{The associated family of K3 surfaces}
\label{subsection:K3}

There is another period map for the Wiman-Edge pencil, which in the terminology of Kudla-Rapoport, is of occult type. 
Recall that the Wiman-Edge pencil is realized on a quintic del Pezzo surface $S$ whose automorphism group (a copy of $\Sf_5$) preserves the pencil and induces in each member $C_t\subset S$ the $\Af_5$-action. There exists a section  $\alpha_t$ of $\omega_S^{-2}$ with divisor $C_t$. Then $\sqrt{\alpha_t}$ defines a surface
$\hat S_t$ in the total space of $\omega_S^{-1}$ (the determinant  bundle of the tangent bundle) such that the projection $\hat S_t\to S_t$ is a double cover ramified along $C_t$. Then $\hat\alpha_t:=\sqrt{\alpha_t}$ is unambiguously defined on  $\hat S_t$ as a $2$-vector and is there nowhere vanishing. 

\begin{proposition}\label{prop:K3}
The surface  $\hat S_t$ is a $K3$-surface (with an ordinary double point over every node of $C_t$). The  $\Af_5$-action on $S$  lifts uniquely to one on  $\hat S_t$ (and hence commutes with the involution) and the orthogonal complement of the $\Qb\Af_5$-embedding $H^2(S; \Qb)\hookrightarrow H^2(\hat S_t; \Qb)$ is as a $\Q\Af_5$-module isomorphic  to $3.\mathbf{1}\oplus V\oplus 2W$.  The  $3$-dimensional summand on which $\Af_5$ acts trivially has  signature $(2,1)$, and its complexification contains $H^{2,0}(\hat S_t)$.
\end{proposition}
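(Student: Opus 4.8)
\noindent\emph{Strategy.} The plan is to prove the four assertions separately: the K3 property from the adjunction formula for double covers; the lifting of the $\Af_5$-action by a splitting argument; the $\Af_5$-module identification via the Hodge-theoretic double-cover formulas together with rigidity of the quintic del Pezzo surface; and the signature claim by a numerical argument.

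\emph{That $\hat S_t$ is a K3 surface, and the lifted action.} Writing $L:=\omega_S^{-1}$, the divisor of $\alpha_t$ lies in $|L^{\otimes 2}|$, so $\pi\colon\hat S_t\to S$ is the double cover branched along $C_t\in|2L|$; it is smooth away from the nodes of $C_t$, and near a node, in local coordinates $x,y$ on $S$ with $C_t=\{xy=0\}$, it has the form $\{w^2=xy\}$, an ordinary double point. The double-cover adjunction formula gives $\omega_{\hat S_t}=\pi^\ast(\omega_S\otimes L)=\mathcal O_{\hat S_t}$, while $\pi_\ast\mathcal O_{\hat S_t}=\mathcal O_S\oplus L^{-1}$ gives $H^1(\hat S_t,\mathcal O_{\hat S_t})=H^1(S,\mathcal O_S)\oplus H^1(S,\omega_S)=0$ since $S$ is rational; so $\hat S_t$, or its crepant resolution of the $A_1$-points when $C_t$ is singular, is a K3 surface. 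Since $C_t$ is $\Af_5$-invariant, $\Af_5$ acts on the line $\Cb\alpha_t\subset H^0(S,\omega_S^{-2})$ through a character, which is trivial because $\Af_5$ is perfect; thus $\alpha_t$ may be taken $\Af_5$-invariant, the $\Af_5$-action on $S$ lifts canonically and functorially to the total space of $\omega_S^{-1}=\det T_S$, and this lift preserves $\hat S_t=\{v:v^{\otimes 2}=\alpha_t\}$ and commutes with the deck involution $v\mapsto -v$. For uniqueness: the automorphisms of $\hat S_t$ lying over $\Af_5$ form an extension $1\to\mu_2\to G\to\Af_5\to 1$ of the deck group by $\mu_2$; the lift just constructed is a homomorphic section, so $G\cong\mu_2\times\Af_5$, and its sections form a torsor under $\Hom(\Af_5,\mu_2)=0$.

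\emph{The $\Af_5$-module structure.} The embedding $H^2(S;\Qb)\hookrightarrow H^2(\hat S_t;\Qb)$ is $\pi^\ast$; it is injective, with image the invariants $H^2(\hat S_t;\Qb)^+$ under the deck involution, and by the projection formula it is orthogonal to $M:=H^2(\hat S_t;\Qb)^-$. As $\pi^\ast$ scales the intersection form by $2$, both are nondegenerate, so $M$ is the orthogonal complement; for $C_t$ smooth $\dim M=22-5=17$ (I will content myself with the smooth members, which is what is needed below; the singular ones are treated by the same computation on the resolution). I would then split $M$ by Hodge type using $\pi_\ast\Omega^p_{\hat S_t}=\Omega^p_S\oplus(\Omega^p_S(\log C_t)\otimes L^{-1})$. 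For $p=2$ this identifies $M^{2,0}$ with $H^0(S,\Omega^2_S(\log C_t)\otimes L^{-1})=H^0(S,\mathcal O_S)=\Cb$, on which $\Af_5$ acts trivially; so $H^{2,0}(\hat S_t)\oplus H^{0,2}(\hat S_t)$ contributes $2\cdot\mathbf 1$ to $M$, and $H^{2,0}(\hat S_t)$ lies in the trivial isotypic part. For $p=1$ it identifies $M^{1,1}$ with $H^1(S,\Omega^1_S(\log C_t)\otimes\omega_S)$; putting the residue sequence $0\to\Omega^1_S\otimes\omega_S\to\Omega^1_S(\log C_t)\otimes\omega_S\to\omega_S|_{C_t}\to 0$ into cohomology, using that $S$ is an infinitesimally rigid quintic del Pezzo surface (so $H^\bullet(S,\Omega^1_S\otimes\omega_S)\cong H^{2-\bullet}(S,T_S)^\vee=0$, as $h^0(T_S)=h^1(T_S)=0$ and $\chi(T_S)=0$ by Riemann--Roch since $K_S^2=5$ and $\chi_{\mathrm{top}}(S)=7$), together with $\omega_S|_{C_t}=\omega_{C_t}^{-1}$, yields an $\Af_5$-isomorphism $M^{1,1}\cong H^1(C_t,\omega_{C_t}^{-1})\cong H^0(C_t,\omega_{C_t}^{\otimes 2})^\vee$. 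It then remains to decompose the $15$-dimensional $\Af_5$-module $H^0(C_t,\omega_{C_t}^{\otimes 2})$. Since $C_t$ is non-hyperelliptic, $\Sym^2 H^0(C_t,\omega_{C_t})\twoheadrightarrow H^0(C_t,\omega_{C_t}^{\otimes2})$ with $6$-dimensional kernel $I_2(C_t)$; by \cite{DFL} (and since $H^{1,0}$ is ``half'' of $H_1(C_t;\Cb)\cong E_\Cb^2$) one has $H^0(C_t,\omega_{C_t})\cong E_\Cb|_{\Af_5}\cong I_\Cb\oplus I'_\Cb$, whence $\Sym^2 H^0(C_t,\omega_{C_t})\cong 2\cdot\mathbf 1\oplus V\oplus 3W$, with $V$, $W$ the irreducible $\Qb\Af_5$-modules of dimensions $4$ and $5$; as the quotient must retain a trivial summand (an $\Af_5$-invariant quadratic differential is a quadratic differential on the orbifold $P$ of type $(0;3,2,2,2)$ of \S\ref{subsect:Wimanhomology}, of which there is a $1$-dimensional space), necessarily $I_2(C_t)\cong\mathbf 1\oplus W$, so $H^0(C_t,\omega_{C_t}^{\otimes 2})\cong\mathbf 1\oplus V\oplus 2W$, $M^{1,1}\cong\mathbf 1\oplus V\oplus 2W$, and $M\cong 3\cdot\mathbf 1\oplus V\oplus 2W$. (Alternatively this decomposition is a direct Chevalley--Weil computation for $C_t\to P$.)

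\emph{The signature, and the main obstacle.} The intersection form on $H^2(\hat S_t;\Qb)$ is $\Af_5$-invariant, hence block-diagonal for $M=(3\cdot\mathbf 1)\oplus V\oplus 2W$; since $V$, $W$ are absolutely simple and (being representations of a finite group) self-dual, the form on the $V$-block is a nonzero scalar times a positive-definite one, so has positive index $0$ or $4$, and that on the $2W$-block has positive index in $\{0,5,10\}$. On the other hand $M$ has signature $(3,19)-(1,4)=(2,15)$, since $H^2$ of a K3 has signature $(3,19)$ and $\pi^\ast H^2(S;\Qb)$ inherits the signature $(1,4)$ of $H^2(S;\Qb)$; and by the Hodge--Riemann relations the rank-$2$ space $(H^{2,0}\oplus H^{0,2})(\hat S_t)$, lying in the complexified $(3\cdot\mathbf 1)$-block, is positive definite. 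As the total positive index of $M$ is $2$, the $V$- and $2W$-blocks are forced to be negative definite and the $(3\cdot\mathbf 1)$-block has signature exactly $(2,1)$, with $H^{2,0}(\hat S_t)$ in its complexification. The step I expect to cost the most is the third one: getting the logarithmic double-cover formulas and the del Pezzo vanishings exactly right, and then pinning down $I_2(C_t)\subset\Sym^2 H^0(C_t,\omega_{C_t})$ (equivalently, carrying out the Chevalley--Weil count); the other steps are formal once that is in hand.
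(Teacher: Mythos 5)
Your proof is correct, but it reaches the key decomposition by a genuinely different route than the paper. For the K3 property the paper argues topologically ($H^1(\hat S_t)$ embeds in $H^1_c(S\ssm C_t;\Lc)$, which vanishes because $S\ssm C_t$ is affine), whereas you use $\pi_*\Oc_{\hat S_t}=\Oc_S\oplus\omega_S$; for the lifting the paper splits the a priori central extension by observing that its center acts nontrivially on the line $H^{2,0}(\hat S_t)$ while $\Af_5$ is perfect, whereas you make $\alpha_t$ itself $\Af_5$-invariant (again by perfectness) and invoke functoriality of $\det T_S$ — both clean, essentially equivalent uses of $H_1(\Af_5)=0$. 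The real divergence is in computing $H^2(\hat S_t;\Qb)^-$: the paper counts fixed points of $(01234)$ and $(012)$ on $\hat S_t$ (via the modular interpretation $(S,S\ssm C_\infty)=(\Mcbar_{0,5},\Mc_{0,5})$), gets the traces $2$ and $2$, and solves a small linear system using the constraint $a\ge 3$, which it justifies by noting that $H^{2,0}\oplus H^{0,2}$ cannot be constant in $t$; you instead compute each Hodge piece of the anti-invariant part via $\pi_*\Omega^p_{\hat S_t}=\Omega^p_S\oplus\Omega^p_S(\log C_t)\otimes\omega_S$, the rigidity vanishings $H^\bullet(S,\Omega^1_S\otimes\omega_S)=0$, and Serre duality, reducing to the decomposition of $H^0(C_t,\omega_{C_t}^{\otimes 2})$, which you pin down from $H^0(C_t,\omega_{C_t})\cong I_\Cb\oplus I'_\Cb$ together with the one-dimensionality of invariant quadratic differentials on the $(0;3,2,2,2)$ orbifold (I checked the character arithmetic: $\Sym^2(I\oplus I')\cong 2\cdot\mathbf 1\oplus V\oplus 3W$ and $I\otimes I'\cong V\oplus W$ are right, and the orbifold count $-3+4=1$ is right). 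The paper's method is shorter and needs no coherent vanishing theorems; yours is longer but buys more: it locates each isotypic piece in its Hodge type (showing directly that $V\oplus 2W$ sits in $H^{1,1}$ and identifying the anti-invariant $(1,1)$-part with $H^0(C_t,\omega_{C_t}^{\otimes 2})^\vee$), and it replaces the "cannot be constant in $t$" step by an explicit computation of $M^{2,0}$. Your signature argument is the same as the paper's, just with the bookkeeping $(3,19)-(1,4)=(2,15)$ made explicit. The only caveats, which you already flag, are that the logarithmic pushforward formula requires $C_t$ smooth, and the singular members must be handled on the minimal resolution.
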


\begin{proof}
The inverse of $\hat\alpha_t$ is a nowhere zero $2$-form. In order to conclude that $\hat S_t$ is a $K3$ surface, it suffices to shows that $H^1(\hat S_t)=0$.
If $\pi: \hat S_t\to S$ is the projection, then we have $H^1(\hat S_t)=H^1(S; \pi_*\Zb)$. The cokernel of $\Zb_S\to \pi_*\Zb$ is a rank one local system $\Lc$ on $ S\ssm C_t$ and since $H^1( S)=0$, it follows that $H^1(\hat S_t)$ embeds in $H^1(S; \pi_*\Zb/\Zb_S)=H^1_c(S\ssm C_t; \Lc)$. But  $S\ssm C_t$ is affine, and hence  $H^1_c(S\ssm C_t; \Lc)=0$.

A priori, there is a central extension of order $2$ of $\Af_5$ that lifts the $\Af_5$-action on $S$, with the nontrivial center acting as involution.
The complex line $H^{2,0}(\hat S_t)$ in $H^2(\hat S_t; \Cb)$ is preserved by this central extension with the center acting nontrivially. This implies that the central extension must be split.  In particular, the $\Af_5$-action on $S$ lifts to $\hat S_t$. It is unique, since any homomorphism  from $\Af_5$ to a cyclic group is trivial.
%In particular, the morphism $\hat S_t\to S$ and the group $\Af_5$ satisfy the hypothesis of Lemma \ref{lemma:charproperty}. Hence the character of $H^\pt(\hat S_t;\Qb)$ is $2\chi_{H^\pt(S;\Qb)}-\chi_{H^\pt(C_t;\Qb)}=2(3.\mathbf{1}+V)-(2.\mathbf{1}-2E_\Qb)=4.\mathbf{1}+2V+2E_\Qb$. So the character  of $H^2(\hat S_t;\Qb)$ is $2.\mathbf{1}+2V+2E_\Qb$.

The $\Af_5$-representation $H^2(\hat S_t; \Qb)$ contains $H^2(S_t; \Qb)$ as a direct summand that is nondegenerate for the intersection pairing and  so the orthogonal complement, denoted   $H^2(\hat S_t; \Qb)^-$,  is a $\Q\Af_5$-module of dimension 
$22-5=17$.  We determine  its  character by computing some Lefschetz numbers. We assume here that $C_t$ is smooth, so that 
the $\Af_5$-character of $H^1(C_t; \Qb)$ is $2E_\Qb$. 
%Write this representation as $a\mathbf{1} +bV +cW +d E$. 

The element $(01234)$ has $2$ fixed points in $S$; this is best seen using the modular interpretation 
$(S, S\ssm C_\infty)=(\Mcbar_{0,5}, \Mc_{0,5})$. 
The fixed  points are then represented by the stable $5$-tuples on $\Pb^1$ given by $(1, \zeta_5, \zeta_5^2, \zeta_5^3, \zeta_5^4)$ and $(1, \zeta_5^2, \zeta_5^4, \zeta_5, \zeta_5^3)$, 
where $\zeta_5$ is a 5th root of unity $\not=1$.
These do not  lie on $C_\infty$ and hence not on $C_t$ for general $t$ (in fact, each of the singular dodecahedral curves contains one of them).  It follows that $(01234)$ has $4$ fixed points in $\hat S_t$.
So the trace  of $(01234)$ acting on $H^2(\hat S_t; \Qb)^-$ is $4-2=2$.
On the other hand, $(012)$ has  4  fixed points and they are represented by taking as the first three points $(1, \zeta_3, \zeta_3^2)$ and letting the last two  be arbitrary chosen in $\{ 0, \infty\}$.  So exactly two lie outside  $C_\infty$ so that $(012)$ has $2.2+2=6$  fixed points in $\hat S_t$. It follows that its trace on $H^2(\hat S_t; \Qb)^-$ is 
 $6-4=2$. 

Write 
\[
H^2(\hat S_t; \Qb)^-=a\mathbf{1} \oplus bV \oplus cW \oplus dE
\]
as $\Qb \Af_5$-modules.  The character table of $\Af_5$ shows that we have  $a + 4b+5c +6d=17$, $a-b+d=2$, $a+b-c=2$. We noted that $H^{2,0}(\hat S_t)\oplus H^{0,2}(\hat S_t)$  is a subspace of  $H^2(\hat S_t; \Cb)^-$ on which $\Af_5$ acts trivially. This subspace cannot be constant in $t$, and so we must have $a\ge 3$. We then find that the only solution is $(a,b,c,d)=(3,1,2,0)$.

We observed that the complexification  $3$-dimensional subspace of $H^2(\hat S_t; \Qb)^-$ defined by the trivial character contains 
$H^{2,0}(\hat S_t)$. This implies that its signature is $(2,1)$.
\end{proof}

With the above in hand, we are now ready to prove Theorem~\ref{theorem:uniformization}.

\begin{proof}[Proof of Theorem~\ref{theorem:uniformization}]
Since  $(H^2(\hat S_t; \Qb)^-)^{\Af_5}$ has signature $(2,1)$,  a connected component of its associated symmetric domain, which we shall denote by
$\Cf$, is of dimension one: it is copy of $\Hf$. This domain parametrizes the Hodge structures of the $K3$-surfaces with a faithful action of $\mu_2\times\Af_5$
of the type above. If $M$ is the subgroup of the orthogonal  transformations of $H^2(\hat S_t)$ of spinor norm one and acting trivially on the vectors perpendicular to $(H^2(\hat S_t; \Qb)^-)^{\Af_5}$, then our period map is defined on all of $\Bc$ and lands in the Shimura curve $M\backslash \Cf$. 
The Torelli theorem for $K3$-surfaces implies that this morphism is injective. So it must be an isomorphism. In particular,   $M\backslash \Cf$ is compact.
This means that the intersection form on  $(H^2(\hat S_t; \Qb)^-)^{\Af_5}$ does not represent zero, and that $M\backslash \Cf$ is of quaternionic type.
\end{proof}

We remark that the structure of $\Bc$ as a Shimura curve of quaternionic type cannot be induced from the period map defined by the Hodge structure $H^1(C_t)$, for the latter has cusps (and goes to a Hilbert modular surface). Simply put, the monodromy along a simple loop around a puncture is of finite order for the former and of infinite order for the latter, so the monodromy  representation $\G\to M$ is not injective.

%\newpage

\bigskip{\noindent
\small
Dept. of Mathematics, University of Chicago\\
E-mail: farb@math.uchicago.edu\\
\\
Yau Mathematical Sciences Center, Tsinghua University, Beijing (China), and Utrecht University\\
E-mail: e.j.n.looijenga@uu.nl

\end{document}